\documentclass[12pt]{amsart}
\usepackage[margin = 1.0in]{geometry}

\usepackage{tikz}
\usepackage{tikz-cd}
\usepackage{url, hyperref}
\usetikzlibrary{shapes.arrows}
\usetikzlibrary{decorations.pathreplacing}
\usepackage[all]{xy}

\usepackage{amsmath}
\usepackage{amssymb}
\usepackage{enumitem}
\usepackage{graphicx}
\usepackage{mathdots}
\usepackage{color}
\usepackage{diagbox}
\usepackage{array, makecell}
\usepackage{rotating}
\usepackage{amsmath}
\usepackage{tikz-cd}

\usepackage{amsthm}
\newtheorem{definition}{Definition}[subsection]
\newtheorem{theorem}[definition]{Theorem}
\newtheorem{example}[definition]{Example}
\newtheorem{proposition}[definition]{Proposition}
\newtheorem{corollary}[definition]{Corollary}
\newtheorem{lemma}[definition]{Lemma}
\newtheorem{remark}[definition]{Remark}

\DeclareMathOperator{\Adm}{Adm}
\DeclareMathOperator{\inbox}{box}

\DeclareMathOperator{\Fl}{Fl}
\DeclareMathOperator{\Gr}{Gr}
\DeclareMathOperator{\LG}{LG}
\DeclareMathOperator{\NE}{NE}
\DeclareMathOperator{\pr}{pr}
\DeclareMathOperator{\OG}{OG}
\DeclareMathOperator{\SO}{SO}
\DeclareMathOperator{\ssp}{sp}
\DeclareMathOperator{\SW}{SW}
\DeclareMathOperator{\rk}{rk}
\DeclareMathOperator{\Spec}{Spec}
\DeclareMathOperator{\mAdm}{mAdm}
\DeclareMathOperator{\SC}{sc}

\newcommand{\bA}{\mathbb{A}}
\newcommand{\bC}{\mathbb{C}}
\newcommand{\cC}{\mathcal{C}}
\newcommand{\cD}{\mathcal{D}}
\newcommand{\cF}{\mathcal{F}}
\newcommand{\cM}{\mathcal{M}}
\newcommand{\bP}{\mathbb{P}}
\newcommand{\cO}{\mathcal{O}}
\newcommand{\cP}{\mathcal{P}}
\newcommand{\bR}{\mathbb{R}}
\newcommand{\cT}{\mathcal{T}}
\newcommand{\cX}{\mathcal{X}}
\newcommand{\bZ}{\mathbb{Z}}
\newcommand{\cZ}{\mathcal{Z}}

\newcommand{\wtn}{\widetilde{[n]}}
\newcommand{\barone}{\overline{1}}
\newcommand{\barj}{\overline{j}}
\newcommand{\bark}{\overline{k}}
\newcommand{\barn}{\overline{n}}
\newcommand{\barq}{\overline{q}}
\newcommand{\vectt}{\overrightarrow{t}}

\setcounter{MaxMatrixCols}{20}

\title{Delta-matroids and toric degenerations in $\OG(n,2n+1)$}

\author{Chen Chen}
\address{University of Illinois Urbana-Champaign, Department of Mathematics, 273 Altgeld Hall, 1409 W. Green Street
\hfill \newline\texttt{}
 \indent Urbana, IL 61801} \email{{\tt Chenc14@illinois.edu}}

 \author{Carl Lian}
\address{Washington University in St. Louis, Department of Mathematics, 1 Brookings Drive
\hfill \newline\texttt{}
 \indent  St. Louis, MO 63130} \email{{\tt clian@wustl.edu}}

 \date{\today}

\usepackage{graphicx}

\begin{document}

\begin{abstract}
    We construct an explicit, embedded degeneration of the general torus orbit closure in the maximal orthogonal Grassmannian $\OG(n,2n+1)$ into a union of Richardson varieties. In particular, we deduce a formula for the cohomology class of the torus orbit closure, as a sum of products of Schubert classes. The moment map images of the degenerate pieces are the base polytopes of their underlying delta-matroids, and give a polyhedral decomposition of the unit hypercube, which had previously been studied by Chen-Sanchez-Veliz-Ying.
\end{abstract}

\maketitle

\setcounter{tocdepth}{1}

\tableofcontents

\section{Introduction}

Let $w_1,\ldots,w_n\in\bC^r$ be a collection of vectors. The associated \emph{matroid} $M$ is the data of the $r$-element subsets of $S\subset [n]$ for which $\{w_q:q\in S\}$ is a basis for $\bC^r$. Many interesting combinatorial invariants of the matroid may be encoded geometrically in the associated torus orbit closure $Z\subset \Gr(r,n)$ \cite{speyer}, defined as follows. Let $A$ be the $r\times n$ matrix whose columns are given by the $w_j$. Let $\Lambda\in\Gr(r,n)$ be the row-span of $A$, assumed to be of dimension $r$. The $n$-dimensional torus $T=(\bC^{*})^n$ acts on matrices $A$ by scaling columns. Then, $Z=\overline{T\cdot \Lambda}$ is the closure of the orbit of $\Lambda$ under the induced $T$-action on $\Gr(r,n)$.

It is therefore of combinatorial and geometric interest to determine the cohomology classes (more generally, K-theory classes) of torus orbit closures $Z\subset\Gr(r,n)$. For the torus orbit closures of a \emph{general} linear subspace $\Lambda$ (associated to the \emph{uniform} matroid $M$), Berget-Fink proved via equivariant localization \cite{bf} that
\begin{equation}\label{eq:bf}
    [Z]=\sum_{\mu\subset(n-r-1)^{r-1}}\sigma_{\mu}\sigma_{\overline{\mu}}\in H^{2(n-r-1)(r-1)}(\Gr(r,n)).
\end{equation}
Here, $\overline{\mu}$ denotes the complement of the partition $\mu$ in a rectangle with $n-r-1$ columns and $r-1$ rows.

The formula \eqref{eq:bf} also follows from an earlier result of Anderson-Tymoczko \cite{at} of the same shape for general torus orbits (more generally, Hessenberg varieties) in the full flag variety $\Fl(n)$, obtained using a degeneracy locus formula of Fulton \cite{fulton}. Klyachko \cite{klyachko} had previously found a different formula for the class $[Z]$, expressed additively in terms of the Schubert basis. A comparison of these formulas on $\Gr(r,n)$ via Coskun's geometric Littlewood Richardson rule \cite{coskun_lr} is given in \cite{lian_lr}. A combinatorial interpretation for Klyachko's coefficients on $\Fl(n)$ has recently been obtained by Nadeau-Tewari \cite{nt}.

The form of \eqref{eq:bf} suggests that one may be able to obtain a different proof by degeneration. Namely, the formula would follow from the existence of an embedded degeneration of $Z$ into a union of generically transverse intersections of two Schubert varieties (Richardson varieties) of classes $\sigma_\mu,\sigma_{\overline{\mu}}$. Indeed, such a degeneration was constructed explicitly in \cite{lian_hhmp} in the full flag variety setting, also re-proving the formula of Anderson-Tymoczko. See also \cite{lian_pr} for connections between these degenerations and curve-counting questions in enumerative geometry. Via the toric moment map, the combinatorial shadow of this degeneration is Harada-Horiguchi-Masuda-Park's polyhedral decomposition of the permutohedron \cite{hhmp}.

The purpose of this paper is to extend this story to type B. The type B analog of a matroid is a \emph{delta-matroid}. See \cite{efls,els,larson} for recent work on the combinatorics and geometry of delta-matroids. Let $\bC^{2n+1}$ be an odd-dimensional complex vector space, equipped with a non-degenerate symmetric quadratic form, and let $\OG(n,2n+1)$ be the orthogonal Grassmannian of (maximal) isotropic subspaces of dimension $n$. Let $\Lambda\in \OG(n,2n+1)$ be a point, given by the row span of a $n\times (2n+1)$-matrix $A_\Lambda$. Then, the (realizable) delta-matroid associated to a point $\Lambda$ is given by the data of the admissible (Definition \ref{def:admissible}) $n$-element subsets of independent column vectors of $A_\Lambda$.

The vector space $\bC^{2n+1}$, and thus the orthogonal Grassmannian $\OG(n,2n+1)$, admit a natural action of the $n$-dimensional torus $T$, and it is again of interest to determine the class of a general torus orbit $[Z]$ in $H^{*}(\OG(n,2n+1))$.\footnote{In \cite{els}, a $K$-theory class $y(\cD)$ on $\OG(n,2n+1)$ is associated to a delta-matroid $\cD$. If $\cD=\cD(\Lambda)$ is the realizable delta-matroid associated to $\Lambda\in\OG(n,2n+1)$, then $y(\cD)$ does \emph{not} always agree with the class of the structure sheaf of the associated torus-orbit closure $[\cO_{\overline{T\cdot\cD(\Lambda)}}]$, see \cite[Proposition 2.3]{els}, in contrast to the setting of ordinary matroids \cite{speyer}. However, these classes do agree when $\Lambda$ is general, which is the main case of interest in this paper.} We prove:

\begin{theorem}\label{thm:class_formula}
    Let $\Lambda\in \OG(n,2n+1)$ be a general point, and let $Z_\Lambda=\overline{T\cdot \Lambda}$ be the closure of the orbit of $T\cong(\bC^{*})^n$. Then,
    \begin{equation*}
        [Z_\Lambda]=\sum_{I\subset[n-1]}\sigma_I\sigma_{I^c}\in H^{n(n-1)}(\OG(n,2n+1)).
    \end{equation*}
\end{theorem}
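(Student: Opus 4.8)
The plan is to prove Theorem \ref{thm:class_formula} by exhibiting an explicit, flat, $T$-equivariant degeneration of $Z_\Lambda$ inside $\OG(n,2n+1)$ whose special fiber is the reduced union of the $2^{n-1}$ Richardson varieties $R_I$, $I\subset[n-1]$, with $[R_I]=\sigma_I\sigma_{I^c}$; the formula then follows from conservation of cohomology class in a flat family, together with the standard fact that a generically transverse intersection of opposite Schubert varieties carries the product of their classes. This is the type-B counterpart of the flag-variety degeneration of \cite{lian_hhmp} and of the matroid picture behind \eqref{eq:bf}.

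First I would fix coordinates: realize $\OG(n,2n+1)$ by its spinor embedding $\OG(n,2n+1)\hookrightarrow \bP(V_{\omega_n})$ into the projectivized spin representation, so that the moment polytope of $\OG(n,2n+1)$ is (an affine image of) the unit hypercube $[0,1]^n$, with vertices the sign vectors $\tfrac12(\pm1,\ldots,\pm1)$ indexed by the admissible $n$-subsets of Definition \ref{def:admissible}. For generic $\Lambda$ the delta-matroid $\cD(\Lambda)$ is uniform, so $Z_\Lambda$ is the projective toric variety of the cube, i.e.\ $(\bP^1)^n$ of degree $n!$. Now construct a flat family $\mathcal Z\to\bA^1$ with $\mathcal Z_t\cong Z_\Lambda$ for $t\neq 0$, either by a single suitably chosen cocharacter of $\SO(2n+1)$ or, more transparently, by $n-1$ successive one-parameter degenerations, the $i$-th of which bisects the current pieces along the hyperplane $x_i=x_{i+1}$. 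Concretely $\mathcal Z$ is the closure in $\bA^1\times\OG(n,2n+1)$ of the locus $\{(t,z): t\neq 0,\ z\in\mathcal Z_t\}$, equivalently the initial degeneration of the isotropic Plücker ideal of $Z_\Lambda$ with respect to the inducing weight; since $\OG$ is projective, $\mathcal Z\to\bA^1$ is flat and $[\mathcal Z_0]=[Z_\Lambda]$.

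The combinatorial core is to identify $\mathcal Z_0$. At the level of moment polytopes the degeneration realizes the subdivision of $[0,1]^n$ obtained by iterated bisection: for generic $x$ each consecutive comparison $x_i\lessgtr x_{i+1}$, $i=1,\dots,n-1$, is determined, so the regions
\[
P_I=\bigl\{x\in[0,1]^n:\ x_i\ge x_{i+1}\ (i\in I),\ x_i\le x_{i+1}\ (i\notin I)\bigr\},\qquad I\subset[n-1],
\]
are $2^{n-1}$ full-dimensional lattice polytopes with disjoint interiors covering $[0,1]^n$, and $\sum_I\operatorname{vol}(P_I)=1$ because linear extensions of the underlying zigzag posets partition $S_n$ by descent set. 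Each $P_I$ is the base polytope of a realizable delta-matroid cut out as the intersection of two opposite ``Schubert'' delta-matroids, hence is the moment polytope of a Richardson variety $R_I=X_I^{+}\cap X_{I^c}^{-}$ for opposite isotropic flags, of dimension $n$ and class $\sigma_I\sigma_{I^c}$. One then checks that the subdivision $\{P_I\}$ is exactly the one induced by the chosen degeneration (a direct weight computation), so that set-theoretically $\mathcal Z_0=\bigcup_{I\subset[n-1]}R_I$: each $R_I$ occurs because the Plücker coordinates surviving in the corresponding chart of the limit are precisely those of its delta-matroid, and no further component can occur because $\sum_I\deg R_I=\bigl(\sum_I\operatorname{vol}(P_I)\bigr)\cdot n!=n!=\deg Z_\Lambda$, which simultaneously forces multiplicity one along each $R_I$. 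Therefore $[Z_\Lambda]=[\mathcal Z_0]=\sum_{I\subset[n-1]}[R_I]=\sum_{I\subset[n-1]}\sigma_I\sigma_{I^c}$.

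The main obstacle is the passage from the polyhedral picture to the scheme-theoretic special fiber: one must show the initial ideal of the Plücker ideal of $Z_\Lambda$ is radical with associated primes exactly those of the $R_I$, with no embedded or lower-dimensional components. This reduces to a careful analysis, in isotropic Plücker coordinates, of how admissibility of $n$-subsets degenerates — where the genuinely type-B features enter with no type-A analog: the role of the fixed coordinate $e_0$, the single spin component, and the appearance of $I\subset[n-1]$ rather than $I\subset[n]$, reflecting the $n-1$ cuts of the cube. A secondary technical point is confirming that, for the specific opposite flags arising in the limit, $R_I=X_I^{+}\cap X_{I^c}^{-}$ is a generically transverse (hence reduced) intersection, so that $[R_I]=\sigma_I\sigma_{I^c}$; this is standard but must be verified in our setting.
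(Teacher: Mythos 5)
Your overall strategy (degenerate $Z_\Lambda$ into a union of Richardson varieties, match moment polytopes to a decomposition of the cube, and use the polyhedral picture to rule out extra components and force multiplicity one) is exactly the paper's strategy. However, there is a genuine error at the combinatorial core: the polyhedral decomposition you propose is not the one induced by any degeneration into Richardson varieties, and in fact its pieces cannot be delta-matroid base polytopes at all. Already for $n=2$ one can compute directly (e.g.\ from the matrix $\begin{bmatrix} 0 & * & + & + & 1 \\ * & 1 & 0 & 0 & 0\end{bmatrix}$ parametrizing a general point of $\Sigma_{\{1\},\emptyset}\subset\OG(2,5)$) that the base polytope of $\Sigma_{\{1\},\emptyset}$ is the triangle $\{x_1+x_2\ge 1\}$ and that of $\Sigma_{\emptyset,\{1\}}$ is $\{x_1+x_2\le 1\}$: the square is cut along the \emph{anti}-diagonal $x_1+x_2=1$, not along $x_1=x_2$ as your $P_I$'s would have it. For $n\ge 3$ the situation is worse than a mislabeling: your region $P_{[n-1]}=\{x_1\ge\cdots\ge x_n\}$ is the order simplex with vertices $0,e_n,e_{n-1}+e_n,\ldots,e_1+\cdots+e_n$, which has an edge in the direction $e_1+e_2+e_3$; this is not a root of $B_n$, so by the Gelfand--Serganova edge criterion $P_{[n-1]}$ is not the moment polytope of any torus orbit closure in $\OG(n,2n+1)$, hence not of any Richardson variety. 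Consequently the identification ``$\mathcal Z_0=\bigcup R_I$ with $\cP(R_I)=P_I$'' and the volume bookkeeping $\sum_I\operatorname{vol}(P_I)=1$ built on it do not establish the theorem. The correct decomposition, which the paper proves covers the cube, is cut by hyperplanes where \emph{partial sums} $x_1+\cdots+x_q$ cross integers (Definition \ref{def:I_associated_to_x} and Proposition \ref{prop:polytope_union}), and the piece attached to $I$ records in which columns these crossings occur.

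Two secondary issues. First, your degeneration is not actually constructed: a cocharacter of $T$ acting on $Z_\Lambda$ does nothing, since $Z_\Lambda$ is $T$-invariant; one must move the point $\Lambda$ itself in a one-parameter family (the paper does this by sending individual matrix entries of an explicit isotropic parametrization to $0$, iterated along a binary tree of ``allowed pairs,'' with care taken so that each intermediate matrix still has orthogonal rows). Second, even with the correct polytopes, the multiplicity-one step is not automatic from a volume count in type B: the relevant multiplicity is the index $[\Xi:\Xi_I]$ of lattices generated by vertex differences, and this index can exceed $1$ for torus orbit closures in $\OG(n,2n+1)$ (see Remark \ref{rem:not_always_reduced}); the paper verifies $\Xi=\Xi_I$ for each $\Sigma_{I,I^c}$ by exhibiting explicit bases in Proposition \ref{prop:multiplicity_1}.
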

Here, $I^c$ denotes the complement of $I\subset[n-1]$. See \S\ref{sec:OG} for notation for Schubert classes and Schubert varieties. While other proofs of Theorem \ref{thm:class_formula} along the lines of the earlier work in type A are presumably possible, our approach is by degeneration. We prove:

\begin{theorem}\label{thm:degen}
There exists an explicit, $T$-invariant degeneration of $Z_\Lambda$ into a union of Richardson varieties
\begin{equation*}
    \Sigma_{I,I^c}:=\Sigma^F_I\cap \Sigma^{F'}_{I^c},
\end{equation*}
as subschemes of $\OG(n,2n+1)$, each appearing with multiplicity 1.
\end{theorem}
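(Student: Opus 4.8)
The plan is to realize the required degeneration as a Gröbner (toric) degeneration of $Z_\Lambda$ inside the spinor embedding of $\OG(n,2n+1)$, governed by an explicit weight vector, and to identify the resulting initial scheme with $\bigcup_I\Sigma_{I,I^c}$ by passing through the moment polytope. Embed $\OG(n,2n+1)\hookrightarrow\bP^N$ by spinor coordinates $(x_S)$, $S\subseteq[n]$, each $x_S$ a $T$-weight vector of weight $e_S$. For a general $\Lambda$ all $x_S(\Lambda)$ are nonzero, so $Z_\Lambda=\overline{T\cdot\Lambda}$ is the (possibly non-normal) projective toric variety with moment polytope $\operatorname{conv}\{e_S:S\subseteq[n]\}=[0,1]^n$, the base polytope of the uniform delta-matroid. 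I would choose a weight vector $\vectt=(\vectt_S)$ on the spinor coordinates --- a cocharacter of the torus of $\bP^N$ --- and let $\cZ\subset\OG(n,2n+1)\times\bA^1$ be the associated Rees-type family; then $\cZ\to\bA^1$ is flat with generic fibre $Z_\Lambda$ and special fibre $\cZ_0$ cut out by the initial ideal $\operatorname{in}_{\vectt}$ of the homogeneous ideal of $Z_\Lambda$. Since $T$ lies in the torus of $\bP^N$, the family is $T$-equivariant and $\cZ_0$ is $T$-invariant for any $\vectt$; the content is to choose $\vectt$ correctly.

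A weight $\vectt$ is a height function $S\mapsto\vectt_S$ on the vertices $e_S$ of the cube, hence induces a regular subdivision of $[0,1]^n$, and the support of $\cZ_0$ is the union of the toric subvarieties of $Z_\Lambda$ dual to its maximal cells. The heart of the argument is to write down $\vectt$ so that: (i) the maximal cells are exactly the base polytopes $\Delta(\cD_I)$ of an explicit family of delta-matroids $\cD_I$ indexed by $I\subseteq[n-1]$, realizing the decomposition $[0,1]^n=\bigcup_I\Delta(\cD_I)$ promised in the abstract; and (ii) the toric variety attached to $\Delta(\cD_I)$ is the Richardson variety $\Sigma_{I,I^c}=\Sigma^F_I\cap\Sigma^{F'}_{I^c}$ for suitable opposite isotropic flags $F,F'$. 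For (ii) I would first verify that, for a complementary pair $(I,I^c)$, the Schubert varieties $\Sigma^F_I$ and $\Sigma^{F'}_{I^c}$ meet generically transversely in an $n$-dimensional Richardson variety, and then check by a direct rank computation on the explicit matrix forms of the generic points of $\Sigma^F_I$ and $\Sigma^{F'}_{I^c}$ that this Richardson variety is $T$-toric, with delta-matroid $\cD_I$ and moment polytope $\Delta(\cD_I)$.

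Granting the combinatorial picture, $\cZ_0$ and $\bigcup_I\Sigma_{I,I^c}$ have the same underlying set, so it remains to show that $\cZ_0$ is reduced with each $\Sigma_{I,I^c}$ appearing with multiplicity $1$. I would do this by comparing degrees in the spinor embedding: flatness gives $\deg\cZ_0=\deg Z_\Lambda=\operatorname{vol}[0,1]^n$, while the polyhedral decomposition gives $\operatorname{vol}[0,1]^n=\sum_I\operatorname{vol}\Delta(\cD_I)=\sum_I\deg\Sigma_{I,I^c}$ (normalized volumes); since $\cZ_0$ contains each reduced $\Sigma_{I,I^c}$ and the $\Sigma_{I,I^c}$ are distinct of the same dimension $n$, no higher multiplicity and no extra component can occur. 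Using that Richardson varieties in $\OG(n,2n+1)$ are Cohen--Macaulay, one then checks that $\bigcup_I\Sigma_{I,I^c}$ is Cohen--Macaulay and that $\cZ_0$, being a flat limit with this support and matching multiplicities, has no embedded components, which upgrades the equality to a scheme-theoretic one; alternatively one can try to prove directly that $\operatorname{in}_{\vectt}$ of the ideal of $Z_\Lambda$ is radical, equal to the ideal of $\bigcup_I\Sigma_{I,I^c}$.

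I expect the main obstacle to be steps (i) and (ii): producing the explicit weight $\vectt$ whose regular subdivision of the cube is exactly the delta-matroid decomposition, \emph{and} checking that each cell is cut out by a Richardson variety of the stated shape. In type A the analogous facts are Harada--Horiguchi--Masuda--Park's subdivision of the permutohedron and Lian's geometric realization \cite{hhmp,lian_hhmp}; the type-B version forces one to work with spinor coordinates and to track the admissibility conditions of Definition~\ref{def:admissible}, which have no type-A counterpart. Controlling $\operatorname{in}_{\vectt}$ of the spinor ideal precisely enough to pin down both the cells and their Richardson-variety structure is where the real work lies, with reducedness of $\cZ_0$ a secondary technical point.
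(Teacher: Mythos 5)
Your combinatorial picture --- a regular subdivision of the cube into delta-matroid base polytopes, components of the special fibre indexed by maximal cells, multiplicities controlled by lattices and volumes --- is the same one the paper exploits via the Chow-quotient machinery of Kapranov and Kapranov--Sturmfels--Zelevinsky. But the geometric mechanism you propose does not work as stated, and your multiplicity argument assumes the one fact that is genuinely delicate in type B.

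First, the Rees/Gr\"obner family attached to a weight vector on the spinor coordinates is the sweep of $Z_\Lambda$ under a cocharacter of the big torus of $\bP^N$. Its special fibre lies in the flat limit of $\OG(n,2n+1)$ under that same cocharacter, and this limit is $\OG(n,2n+1)$ itself only if the cocharacter preserves the spinor relations, i.e.\ comes from a maximal torus of $\SO(2n+1)$ (up to global scaling). That maximal torus is $T$ itself, and cocharacters of $T$ merely translate $Z_\Lambda$, giving a trivial degeneration. So for any weight producing a nontrivial subdivision of the cube, your family $\cZ$ is \emph{not} contained in $\OG(n,2n+1)\times\bA^1$, and its components need not be subvarieties of $\OG(n,2n+1)$ at all. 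The fix is to degenerate the \emph{point} $\Lambda$ rather than the ambient coordinates: one needs an explicit path $\Lambda(t)$ inside $\OG(n,2n+1)$ whose spinor coordinates acquire the prescribed valuations, and then $\cZ$ is the closure of the fibrewise orbits $T\cdot\Lambda(t)$. Producing such a path compatibly with the quadratic form is precisely the content of the paper's matrix charts for Richardson varieties and its iterative, one-entry-at-a-time degeneration through the tree of allowed pairs; it is the core construction, not a secondary technicality. Relatedly, even once the cells of the subdivision are known, identifying the component over a given cell with the specific subvariety $\Sigma_{I,I^c}$ requires exhibiting an actual limit point in it (the paper computes $\lim_{t\to0}\vectt\cdot\Lambda(t)$ explicitly and shows $T$ acts transitively on a dense open subset of $\Sigma_{I,I^c}$); knowing that $\Sigma_{I,I^c}$ is toric with the correct moment polytope does not by itself show that it occurs in $\cZ_0$.

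Second, the degree comparison cannot establish multiplicity $1$. If a component $Z_k$ appears with multiplicity $c_k=[\Xi:\Xi_k]$, its degree is the volume of its polytope normalized with respect to the sublattice $\Xi_k$, which differs from the volume normalized with respect to $\bZ^n$ by exactly the factor $[\bZ^n:\Xi_k]$; the products $c_k\deg Z_k$ therefore always sum to the normalized volume of $[0,1]^n$, with or without higher multiplicities. Your asserted equality of $\deg\Sigma_{I,I^c}$ with the standard normalized volume of its polytope is equivalent to the statement $\Xi_I=\bZ^n$ that needs proving, and Remark \ref{rem:not_always_reduced} shows this genuinely fails for other delta-matroid polytopes in $\OG(n,2n+1)$. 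The paper closes this gap by a direct computation with the special columns of the matrices $M_{I,I^c}$ (Proposition \ref{prop:multiplicity_1}); some such lattice argument is unavoidable.
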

In particular, Theorem \ref{thm:class_formula} also holds $T$-equivariantly. Our degeneration is constructed in \S\ref{sec:degen}. It is, in some sense, as simple as possible: it proceeds iteratively by sending entries of a matrix whose row span is $\Lambda$ to 0, one at a time. The intermediate steps go through torus orbits of subspaces lying in Richardson varieties associated to \emph{allowed pairs} (Definition \ref{def:allowed_pairs}). While the approach is similar to that of \cite{lian_hhmp}, in type B, one needs more care to ensure that the degeneration of matrix entries is compatible with the underlying quadratic form. In \S\ref{sec:charts}, we identify coordinates for the intermediate Richardson varieties that appear, which allow us to carry out the degeneration in explicit terms while controlling the interaction with the quadratic form.

While the explicit equations allow us to identify limit components of our degeneration, the inexistence of further components is less apparent. This is established via the moment map: we show in \S\ref{sec:moment_polytopes} that the moment polytopes (base polytopes of the associated delta-matroids) of the Richardson varieties $\Sigma_{I,I^c}$ cover that of the general torus orbit. Here, the picture is again as simple as possible: the base polytope of the uniform delta-matroid is the unit hypercube $[0,1]^n$, and the pieces are cut out by hyperplanes. The polyhedral decomposition we obtain had previously been studied by Chen-Sanchez-Veliz-Ying \cite[Proposition 6.1]{csvy} in the context of ``lattice path delta matroids.''

That the limit components each appear with multiplicity 1 is also deduced on the level of moment polytopes. Unlike in the case of ordinary Grassmannians, this is not immediate, see Remark \ref{rem:not_always_reduced}.

In the appendix, we record some well-known foundational results on $\OG(n,2n+1)$ and delta-matroids. In particular, we provide a proof of the equivalence of two definitions of the base polytope of the matroid associated to $\Lambda\in\OG(n,2n+1)$ in Theorem \ref{thm:polytopes_same}. 

In February 2025, we learned, from a talk of Melissa Sherman-Bennett at the IAS workshop ``Combinatorics of Enumerative Geometry,'' of the forthcoming work of Knutson--Sanchez--Sherman-Bennett \cite{kssb} on degenerations of torus orbits in flag varieties $G/B$ of arbitrary Lie type. They construct, for every Coxeter element $w\in W$ of the associated Weyl group, a degeneration of the general torus orbit closure into a union of Richardson varieties, and study the corresponding moment map images. In particular, they obtain families of different formulas for the torus orbit closures, which push forward to formulas on the partial flag varieties $G/P$.

In type B, it is highly plausible, but as far as we are aware, not obvious, that (at least) one of their degenerations on the full type B flag variety $\SO(2n+1)/B$ parametrizing maximal flags of isotropic subspaces pushes forward to the one that we have constructed on $\OG(n,2n+1)$. Their degenerations are valid in much greater generality than ours, but as we understand, are constructed more abstractly. We have tried to emphasize in this work the explicit nature of our degeneration; it would be interesting to match it to one of theirs. We thank the authors of \cite{kssb} for communicating their results and encouraging us to complete this paper.

\subsection{Conventions}\label{sec:conventions}

\begin{itemize}

\item If $S$ is a subset, then its cardinality is denoted $|S|$.
\item $[n]$ denotes the set $\{1,\ldots,n\}$, and $\overline{[n]}$ denotes the set $\{\bar1,\ldots,\barn\}$. We also write $\wtn:=[n]\cup\overline{[n]}$ and $\wtn^0=\wtn\cup\{0\}$.
\item If $a,b$ are positive integers, then we often write $[a,b]$ to mean the set of \emph{integers} in the interval $[a,b]$, and $\overline{[a,b]}$ to denote the set $\{\overline{a},\overline{a+1},\ldots,\overline{b}\}$.
\item The symbol $\overline{\phantom{x}}$ denotes the natural involution on the set $\wtn^0$, where $\overline{0}=0$, as well as the natural involution on the set of \emph{subsets} $S\subset \wtn^0$. For example, if $S=\{0,1,\overline{2}\}$, then $\overline{S}=\{0,\bar1,2\}$.
\item We use angle brackets $\langle\cdot\rangle$ to denote linear span of vectors.
\item The entry of a matrix $A$ in row $p$ and and column $q$ is denoted $a^p_q$. The column set of $A$ will typically be identified with $\wtn^0$, in which case $q$ is an element of $\wtn^0$.
\item If $A$ is a matrix with column set $\wtn^0$ and $S\subset\wtn^0$ is a subset, then $A_S$ denotes the sub-matrix of $A$ consisting of columns indexed by $S$.
\item If $X$ is an irreducible quasi-projective scheme over $\bC$, then a ``general point'' is a closed point lying in a (possibly unspecified) non-empty Zariski open subset. The term ``generic point'' is reserved for the unique point (not closed unless $X$ is itself a point) which lies in every non-empty Zariski open subset of $X$. The same goes for the general and generic fibers of a morphism $f:Y\to X$.
\end{itemize}

\subsection{Acknowledgments}

G.C. was supported by the Masters' Enrollment Reinvestment Program at Tufts University. C.L. has been supported by NSF Postdoctoral Fellowship DMS-2001976, an AMS-Simons travel grant, and the Summer Scholars program at Tufts University. We thank Matt Larson for comments on a draft of this work.

\section{Preliminaries}

\subsection{Orthogonal Grassmannians}\label{sec:OG}

Let $n\ge 2$ be an integer. Let $\bC^{2n+1}$ be a vector space, for which we fix a basis
\begin{equation*}
    e_1,\ldots,e_n,e_0,e_{\barn},\ldots,e_{\barone}
\end{equation*}
Throughout, we refer to this basis as the \emph{standard basis} of $\bC^{2n+1}$. The \emph{coordinates} of a vector $x\in \bC^{2n+1}$ always refer to the coordinates in the standard basis, and matrices whose row vectors are elements of $\bC^{2n+1}$ expressed in standard coordinates are understood to have columns indexed by the set $\wtn^0$.

For any subset $S\subset\wtn^0$, let $H_S\subset \bC^{2n+1}$ denote the span of the basis vectors $e_j$ for $j$ lying in the \emph{complement} of $S$. That is, $H_S$ is the subspace of vectors which, when expressed in the standard basis, are zero in the coordinates indexed by $S$. In particular, the codimension of $H_S$ in $\bC^{2n+1}$ is $|S|$. We also write $H_j=H_{\{j\}}$.

We equip $\bC^{2n+1}$ with the non-degenerate symmetric form defined by
\begin{equation*}
    y\cdot z=2y_0z_0+(y_1z_{\bar1}+y_{\bar1}z_1)+\cdots+(y_nz_{\barn}+y_{\barn}z_n),
\end{equation*}
where the $y_q,z_q$, $q\in\wtn^0$ are the coordinates of $y,z$, respectively. Let $\OG(n,2n+1)$ be the maximal orthogonal Grassmannian of $n$-dimensional isotropic subspaces of $\bC^{2n+1}$. The $\bC$-scheme $\OG(n,2n+1)$ is smooth, projective, and irreducible of dimension $\frac{n(n+1)}{2}$. The orthogonal Grassmannian is naturally included in the usual Grassmannian $\Gr(n,2n+1)$ of $n$-planes in $\bC^{2n+1}$, which in turn is embedded in the projective space $\bP^{\binom{2n+1}{n}-1}$ via the Pl\"{u}cker embedding:
\begin{equation*}
    \OG(n,2n+1)\hookrightarrow \Gr(n,2n+1)\hookrightarrow \bP(\wedge^n(\bC^{2n+1}))\cong \bP^{\binom{2n+1}{n}-1}.
\end{equation*}

The $n$-dimensional torus $T=(\bC^{*})^n$ acts linearly on $\bC^{2n+1}$ by:
\begin{align*}
    (t_1,\ldots,t_n)\cdot e_q&=t_qe_q,\\
    (t_1,\ldots,t_n)\cdot e_{\barq}&=t_q^{-1}e_{\barq}
\end{align*}
for $j=1,\ldots,n$, and $(t_1,\ldots,t_n)\cdot e_0=e_0$. The action of $T$ on $\bC^{2n+1}$ induces a generically free action of $T$ on $\OG(n,2n+1)$.

Let $\Lambda\in\OG(n,2n+1)$ be a point. In this paper, we will be primarily be interested in the torus orbit closures
\begin{equation*}
    Z_\Lambda:=\overline {T\cdot\Lambda}\subset\OG(n,2n+1).
\end{equation*}
and their classes $[Z_\Lambda]\in H^{*}(\OG(n,2n+1))$. If $\Lambda$ is a general point, then $Z_\Lambda$ is irreducible of dimension $n$.

\subsection{Schubert and Richardson varieties}\label{sec:schubert}

We next recall the definitions of Schubert varieties in $\OG(n,2n+1)$. We essentially follow the indexing of \cite{coskun_rigid}. For $q=0,\ldots,n$, define the isotropic subspaces
\begin{align*}
    F_q&=\langle e_1,\ldots,e_q\rangle,\\
    F'_q&=\langle e_{\barone},\ldots,e_{\barq}\rangle.
\end{align*}
of $\bC^{2n+1}$. When $q=0$, the subspaces $F_q,F'_q$ are both zero. Define the complete isotropic flags
\begin{align*}
    F&=(0=F_0\subset F_1\subset\cdots\subset F_n\subset F_n^\perp\subset\cdots\subset F_1^\perp\subset F_0^\perp=\bC^{2n+1}),\\
    F'&=(0=F'_0\subset F'_1\subset\cdots\subset F'_n\subset (F'_n)^\perp\subset\cdots\subset (F'_1)^\perp\subset (F'_0)^\perp=\bC^{2n+1}).
\end{align*}
The flags $F,F'$ are transverse, in the sense that the intersection of any two of their constituent subspaces is transverse.

Let $I\subset[n]$ be a subset. Let $\lambda_1>\cdots>\lambda_s$ be the elements of $I$, which we may identify with the partition $\lambda=(\lambda_1,\ldots,\lambda_s)$. Let $\mu_{s+1}>\ldots>\mu_{n}$ be the non-negative integers obtained by removing the integers $n-\lambda_s,\ldots,n-\lambda_1$ from $n-1,\ldots,0$. For example, if $I=\{1,4,6\}\subset\{1,2,3,4,5,6,7\}$, then $(\mu_4,\mu_5,\mu_6,\mu_7)=(5,4,2,0)$.

\begin{definition}\label{def:schubert}
Define the \emph{Schubert variety} $\Sigma_I^F\subset\OG(n,2n+1)$ to be the closure of the subscheme $\Sigma_I^{F,\circ}\subset\OG(n,2n+1)$ of isotropic subspaces $\Lambda\subset\bC^{2n+1}$ satisfying:
\begin{align*}
    \dim(\Lambda\cap F_{n+1-\lambda_h})&=h,\text{ for }h=1,\ldots,s,\\
    \dim(\Lambda\cap F^\perp_{\mu_h})&=h,\text{ for }h=s+1,\ldots,n.
\end{align*}
\end{definition}
The Schubert variety $\Sigma_I^F$ is non-empty, reduced and irreducible of codimension 
\begin{equation*}
w(I):=\lambda_1+\cdots+\lambda_s
\end{equation*}
This may be seen, for example, by realizing $\Sigma_I^{F,\circ}$ as a double coset in $\OG(n,2n+1)$. In particular, $\Sigma_I^F$ defines a class $\sigma_I:=[\Sigma_I^F]\in H^{2w(I)}(\OG(n,2n+1))$. The classes of Schubert varieties freely generate $H^{*}(\OG(n,2n+1))$ \cite{bgg,borel}.

Similarly, if $I'\subset[n]$ is a subset, to which we associate the partitions $\lambda',\mu'$ as above, then we may define the Schubert variety $\Sigma_{I'}^{F'}\subset\OG(n,2n+1)$, of class $\sigma_{I'}=[\Sigma_{I'}^{F'}]\in H^{2w(I')}(\OG(n,2n+1))$.

\begin{definition}\label{def:richardson}
Let $I,I'\subset[n]$ be subsets. Define the \emph{Richardson variety} $\Sigma_{I,I'}:=\Sigma_I^F\cap \Sigma_{I'}^{F'}\subset\OG(n,2n+1)$.
\end{definition}

A standard incidence correspondence argument varying the flags $F,F'$ shows that $\Sigma_{I,I'}$ is reduced and irreducible of codimension $w(I)+w(I')$, the expected, and that a general point of the intersection in fact lies in the open locus $\Sigma_I^{F,\circ}\cap \Sigma_{I'}^{F',\circ}$. In particular, we have
\begin{equation*}
    [\Sigma_{I,I'}]=\sigma_I\sigma_{I'}\in H^{2(w(I)+w(I'))}(\OG(n,2n+1)).
\end{equation*}
Because our Richardson varieties will always be defined with respect to the flags $F,F'$, we drop the flags from the notation.

\subsection{Delta-matroids and polytopes}\label{sec:delta}

In this section, we discuss the (realizable) delta-matroid $\cD(\Lambda)$ associated to a point $\Lambda\in\OG(n,2n+1)$, and various auxiliary objects. See \cite{dh,bouchet,ck,bgw} for foundations of the theory of delta-matroids and \cite{efls,els,larson} for more recent work.

\begin{definition}\label{def:admissible}
    A subset $S\subset \wtn$ is \emph{admissible} if, for all $q\in[n]$, we have $\{q,\barq\}\not\subset S$. An admissible subset $S$ is \emph{maximal} if $|S|=n$.

    The collection of admissible subsets $S\subset\wtn$ is denoted $\Adm_n$, and the collection of maximal admissible subsets is denoted $\mAdm_n$.
\end{definition}

\begin{definition}\label{def:rank}
Let $\Lambda\in\OG(n,2n+1)$ be an isotropic subspace, and let $S\subset \wtn$ be an admissible subset. We define $\rk_\Lambda(S)\in[0,|S|]$ by the rank of the linear map
\begin{equation*}
\Lambda\hookrightarrow\bC^{2n+1}\to \bC^{2n+1}/H_S,
\end{equation*}
where we recall from \S\ref{sec:OG} that $H_S$ is the subspace of $\bC^{2n+1}$ spanned by the vectors $e_q$, $q\notin S$.

We also define
\begin{equation*}
    g_\Lambda(S):=\rk_\Lambda(S)-|S\cap\overline{[n]}|.
\end{equation*}
\end{definition}

Concretely, if $A$ is any $n\times (2n+1)$ matrix whose rows form a basis of $\Lambda$, then $\rk_\Lambda(S)$ is the rank of the submatrix $A_S$ (see \S\ref{sec:conventions}).

\begin{definition}\label{def:delta_matroid}
    Let $\Lambda\in\OG(n,2n+1)$ be an isotropic subspace. The \emph{delta-matroid} $\cD(\Lambda)\subset\mAdm_n$ associated to $\Lambda$ is the collection of maximal admissible  $S\subset \wtn$ for which $\rk_\Lambda(S)=n$.
\end{definition}

In fact, the data of $\cD(\Lambda)$ determines the value of $\rk_\Lambda(S)$, and therefore the value of $g_\Lambda(S)$, for \emph{any} admissible subset $S$, not necessarily maximal, see Corollary \ref{cor:rank_in_terms_of_M} of the Appendix. We deal only with realizable delta-matroids in this paper, that is, those that arise as $\cD(\Lambda)$ for some $\Lambda\in\OG(n,2n+1)$; the data of $\Lambda$ will always be present.

We now define the polytope $\cP(\Lambda)\subset\bR^n$ associated to $\Lambda$. Let $\mathbb{R}^n$ be an $n$-dimensional real vector space, with standard basis $f_1,\ldots,f_n$. 

\begin{definition}\label{def:x(S)}
Given a point $x=x_1f_1+\cdots+x_nf_n\in\mathbb{R}^n$ and an admissible subset $S\in\Adm_n$, define 
\begin{equation*}
    x(S):=\sum_{q=1}^{n}\epsilon_qx_q\in\mathbb{R}
\end{equation*}
where
\begin{equation*}
    \epsilon_q:=
    \begin{cases}
        1&\text{ if }q\in S\\
        -1&\text{ if }\barq\in S\\
        0&\text{ otherwise}
    \end{cases}
\end{equation*}
\end{definition}

\begin{definition}\label{def:polytope}
    We define $\cP(\Lambda)\subset\bR^n$ to be the convex polytope cut out by the inequalities
    \begin{equation*}
        x(S)\le g_\Lambda(S)
    \end{equation*}
    for all $S\in\Adm_n$.
\end{definition}
Taking $S$ to be each of the singleton sets $\{q\},\{\barq\}$ yields immediately that $\cP(\Lambda)\subset[0,1]^n$. For a general point $\Lambda\in\OG(n,2n+1)$, the polytope $\cP(\Lambda)$ is the full hypercube $[0,1]^n$. The polytope $\cP(\Lambda)$ is the base polytope of the delta-matroid $\cD(\Lambda)$, which is usually defined to be the convex hull of the $0-1$ vectors
\begin{equation*}
    \chi(S'):=\sum_{q\in [n]\cap S'}f_q\in\mathbb{R}^n.
\end{equation*}
ranging over maximal admissible $S'\in\cD(\Lambda)$. A proof of the equivalence of these definitions is given in \S\ref{sec:polytopes} of the appendix. 

Recall from \S\ref{sec:OG} that $\Lambda\in\OG(n,2n+1)$ gives rise to a torus orbit closure $Z_\Lambda\subset\OG(n,2n+1)$. The image of $Z_\Lambda$ under the \emph{moment map}
\begin{equation*}
    \mu:Z_\Lambda\hookrightarrow \bP(\wedge^n\bC^{2n+1})\to\bR^{n}
\end{equation*}
is equal to twice-dilated polytope
\begin{equation*}
\widehat{\cP(\Lambda)}:=2\cP(\Lambda)-(1,\ldots,1),
\end{equation*}
see \cite[Proposition 6.2]{efls} and \cite[\S 5.2, Proposition 1]{gs}. The inclusion $Z_\Lambda\hookrightarrow \bP(\wedge^n\bC^{2n+1})$ is the Pl\"{u}cker embedding, and the map $\bP(\wedge^n\bC^{2n+1})\to\bR^{n}$ is defined by
\begin{equation*}
    \sum_{S} x_Se_S\mapsto \sum_{S}\frac{|x_S|^2}{\sum_{S}|x_S|^2}f_S,
\end{equation*}
where 
\begin{equation*}
    f_S:=\sum_{q\in S\cap[n]}f_q-\sum_{q\in \overline{S}\cap [n]}f_{q}\in\bR^n,
\end{equation*}
and the sums are over \emph{all} (not necessarily admissible) subsets $S\subset\wtn$ of cardinality $n$. Moreover, the dimension of $\widehat{\cP(\Lambda)}$ (and thus $\cP(\Lambda)$) as a polytope in $\bR^n$ equals the dimension of $Z_\Lambda$ as a subvariety of $\OG(n,2n+1)$ \cite[\S 3.2, Theorem 1]{gs}.

\subsection{Degenerations of torus orbits}\label{sec:prelim_degen}

Let $U\subset\Spec(\bC[t])=\bA^1_\bC$ be a Zariski open subset containing the point $t=0$. (One can just as well take $U$ to be an analytic open disk, or the spectrum of  $\bC[t]_{(t)}$ or $\bC[[t]]$.)

We will be interested in degenerations of torus orbit closures in $\OG(n,2n+1)$, by which we mean proper flat families
\begin{equation*}
    \pi:\cZ\subset \OG(n,2n+1)\times U\to U,
\end{equation*}
which are fiberwise $T$-invariant, and whose (geometric) fibers away from $t=0$ are given by torus orbit closures $Z_{\Lambda_t}$. Equivalently, $\pi$ is a 1-parameter family in the Chow quotient $\OG(n,2n+1)//T$ in the sense of \cite{ksz}. (More precisely, $\pi$ is a 1-parameter family in $\bP(\wedge^n\bC^{2n+1})//T$ lying in the image of $\OG(n,2n+1)$.)

As an algebraic cycle, the special fiber $\pi^{-1}(0)$ is given by a positive linear combination of irreducible components
\begin{equation*}
    c_1Z_{1}+\cdots+c_mZ_{m}.
\end{equation*}
of the same dimension as $Z_{\Lambda_t}$. Each of the components $Z_k$ is itself a torus orbit closure $Z_{\Lambda_k}$ for some $\Lambda_k\in\OG(n,2n+1)$, and the polytopes $\cP(\Lambda_k)$ form a polyhedral decomposition of $\cP(\Lambda)$ \cite[Proposition 3.6]{ksz}, see also \cite[Proposition 1.2.11]{kapranov} for the case of ordinary Grassmannians. 

Furthermore, the coefficients $c_k$ may be computed in terms of the moment polytopes as follows, see \cite[(1.3)]{ksz} and the proof of \cite[Proposition 1.2.15]{kapranov}. Let $\Xi$ (resp $\Xi_k$) be the lattice generated by differences of vertices of $\cP$ (resp. $\cP_k$). Then, $c_k=[\Xi:\Xi_k]$.

\begin{corollary}\label{cor:moment_map_union}
    Let $\pi$ be a degeneration of torus orbits as above, whose general fiber is the torus orbit closure $Z_{\Lambda_t}$. Suppose that $Z_{\Lambda_1},\ldots,Z_{\Lambda_{m'}}$ are distinct torus orbit closures contained in the special fiber $\pi^{-1}(0)$, each of the same dimension of $Z_{\Lambda_t}$. Suppose further that
    \begin{equation*}
        \cP(\Lambda_t)=\bigcup_{k=1}^{m'}\cP(\Lambda_{i}).
    \end{equation*}
    Then, the irreducible components of the special fiber $\pi^{-1}(0)$ are precisely $Z_{\Lambda_1},\ldots,Z_{\Lambda_{m'}}$. In particular, we have
    \begin{equation*}
        [Z_{\Lambda_t}]=\sum_{k=1}^{m'}c_k[Z_{\Lambda_k}]\in H^{2\dim(\Lambda_t)}(\OG(n,2n+1)).
    \end{equation*}
\end{corollary}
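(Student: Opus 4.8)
The plan is to combine the general structural facts about degenerations of torus orbits recalled in \S\ref{sec:prelim_degen} with the polytope hypothesis to pin down both the support and the multiplicities of the special fiber. First I would recall that, by \cite[Proposition 3.6]{ksz}, the special fiber $\pi^{-1}(0)$ is a positive linear combination $c_1 Z_1 + \cdots + c_m Z_m$ in which \emph{every} irreducible component $Z_k$ is itself a torus orbit closure $Z_{\Lambda_k}$ of the same dimension as $Z_{\Lambda_t}$, and that the polytopes $\cP(\Lambda_k)$ form a polyhedral decomposition of $\cP(\Lambda_t)$. Since a polyhedral decomposition covers $\cP(\Lambda_t)$ without overlaps of top-dimensional pieces, and since each $\cP(\Lambda_k)$ has the same dimension as $\cP(\Lambda_t)$ (because $\dim \cP(\Lambda_k) = \dim Z_{\Lambda_k} = \dim Z_{\Lambda_t} = \dim \cP(\Lambda_t)$, using \cite[\S 3.2, Theorem 1]{gs}), the pieces $\cP(\Lambda_k)$ are exactly the maximal cells of this decomposition.

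Next I would argue that the given list $Z_{\Lambda_1},\ldots,Z_{\Lambda_{m'}}$ already accounts for all the components. By hypothesis these are distinct torus orbit closures appearing in $\pi^{-1}(0)$, each of full dimension, and their polytopes cover $\cP(\Lambda_t)$. Any further component $Z_{\Lambda_k}$ of $\pi^{-1}(0)$ would again be a full-dimensional torus orbit closure whose polytope $\cP(\Lambda_k)$ is a maximal cell of the decomposition; but $\cP(\Lambda_k)$ has nonempty interior, and that interior must meet the interior of some $\cP(\Lambda_i)$ with $i \le m'$, since the latter cover $\cP(\Lambda_t)$. Two distinct maximal cells of a polyhedral decomposition have disjoint interiors, so $\cP(\Lambda_k) = \cP(\Lambda_i)$. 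Because a torus orbit closure in $\OG(n,2n+1)$ is determined up to the $T$-action by its moment polytope, and in fact the polytope determines the orbit closure as a subvariety when it is one of the degenerate pieces sharing the same ambient Chow quotient point, one concludes $Z_{\Lambda_k} = Z_{\Lambda_i}$, contradicting distinctness. (Alternatively: the map sending a degenerate component to its polytope is injective on components of a fixed fiber of a family in the Chow quotient, which is exactly the content of the decomposition statement in \cite{ksz}.) Hence the components of $\pi^{-1}(0)$ are precisely $Z_{\Lambda_1},\ldots,Z_{\Lambda_{m'}}$, giving
\begin{equation*}
    [Z_{\Lambda_t}] = \sum_{k=1}^{m'} c_k [Z_{\Lambda_k}] \in H^{2\dim(\Lambda_t)}(\OG(n,2n+1)),
\end{equation*}
since $\pi$ is flat and proper so the class is preserved in the limit and the special fiber decomposes into the $Z_{\Lambda_k}$ with the stated multiplicities.

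The main obstacle I expect is the clean justification that a full-dimensional component of the special fiber cannot have a polytope strictly contained in, or merely overlapping, one of the listed $\cP(\Lambda_i)$ — i.e., making rigorous that the $\cP(\Lambda_k)$ genuinely tile $\cP(\Lambda_t)$ as \emph{maximal} cells rather than, say, one cell being subdivided by others. This is handled by invoking the precise statement of \cite[Proposition 3.6]{ksz} (and \cite[Proposition 1.2.11]{kapranov} in the Grassmannian case): the polytopes of the components of a fiber form a polyhedral \emph{subdivision}, so their relative interiors are pairwise disjoint and any full-dimensional one is a maximal cell. Everything else — flatness giving conservation of the cohomology class, and the identification of multiplicities $c_k$ with the lattice indices $[\Xi:\Xi_k]$ — is quoted directly from \S\ref{sec:prelim_degen}, so the corollary follows formally. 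Note that the multiplicities $c_k$ need not all equal $1$ in general; that the specific Richardson-variety degeneration of Theorem \ref{thm:degen} is reduced is a separate point, addressed via the lattice computation and Remark \ref{rem:not_always_reduced}.
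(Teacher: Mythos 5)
Your proof is correct and follows essentially the same route as the paper's: flatness forces the listed full-dimensional orbit closures to be irreducible components, the KSZ polyhedral-decomposition statement combined with the covering hypothesis rules out any further component, and constancy of the cycle class in a flat family gives the cohomological formula. The only wrinkle is your intermediate claim that the moment polytope determines a degenerate component as a subvariety, which is dubious as stated but also unnecessary---your parenthetical alternative (disjointness of interiors of cells in the decomposition, which is how the paper argues) already suffices.
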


\begin{proof}
    If $\dim(Z_{\Lambda_i})=\dim(Z_{\Lambda_t})$, then by flatness, $Z_{\Lambda_i}$ must be an irreducible component of the special fiber. If there were any other components of the special fiber, then their (dilated) moment map images would be non-empty sub-polytopes of $\cP(\Lambda_t)$ of the same dimension, and along with the $\cP(\Lambda_{i})$, would form a polyhedral decomposition of $\cP(\Lambda_t)$. However, this is impossible by the assumption on the union of the $\cP(\Lambda_i)$. The formula in cohomology follows, as the class remains constant in a flat family of algebraic cycles.
\end{proof}

\section{Charts for Richardson varieties}\label{sec:charts}

Our degeneration of the general torus orbit of in $\OG(n,2n+1)$ passes through torus orbits of subspaces $\Lambda\in\Sigma_{I,I'}$ lying in successively more special Richardson varieties, associated to \emph{allowed pairs} $(I,I')$. In order to make these degenerations explicit, we will need affine coordinates on the Richardson varieties $\Sigma_{I,I'}$, given in the end of this section by Corollary \ref{cor:chart}.

\subsection{Allowed pairs}\label{sec:allowed_pairs}



\begin{definition}\label{def:allowed_pairs}
    Let $I, I' \subset [n-1]$ be subsets. We say that pair $(I, I')$ is an \emph{allowed pair} if $I\cap I'=\emptyset$, and in addition one of the following holds:
        \begin{enumerate}
            \item[(i)] $I\cup I'=[n-1]$,
            \item[(ii)] $(I,I)=(\emptyset,\emptyset)$,
            \item[(iii)] $(I,I')=(\emptyset,\{j\})$ for some $j\in[n-2]$,
            \item[(iv)] for some $k\in[2,n-1]$, we have $k\in I'$ and $I\cup I'=[k,n-1]$, or
            \item[(v)] for some $j,k\in[n-1]$ with $1\le j\le k-2$, we have $j,k\in I'$ and $I\cup I'=\{j\}\cup[k,n-1]$.
        \end{enumerate}
\end{definition}

We say that the allowed pair $(I,I')$ is \emph{saturated} if $I\cup I'=[n-1]$ (case (i)). It is clear in cases (iii)-(v) that the integers $j,k$ are intrinsic to the pair of subsets $(I,I')$: for example, in case (v), $j$ is the smallest element of $I\cup I'$, and $k$ is the second-smallest, both lying necessarily in $I'$.

For indexing purposes, it will be convenient to view (ii)-(iv) as degenerate cases of (v). Cases (iii) and (iv) can be taken to be the degenerate cases of (v) where $k=n$ and $j=0$, respectively, and case (ii) can be taken to be the further degenerate case where $(j,k)=(0,n)$. In the degenerate cases, $I,I'$ are still taken to be subsets of $[1,n-1]$, so requirements that $0,n\in I'$ are ignored. For example, taking $k=n$ in case (v) reads: for some $j$ with $1\le j\le n-2$, we have $j,n\in I'$ and $I\cup I'\in \{j\}\cup [n,n-1]=\{j\}$. We interpret this to mean that $j\in I'$ and $I\cup I'=\{j\}$, which is precisely case (iii).

\begin{definition}\label{def:jk}
Let $(I,I')$ be an allowed pair. If $(I,I')$ is not saturated, then define the non-negative integers $j(I,I'),k(I,I')\le n$ to be the values of $j,k$ above. If $(I,I')$ is saturated, then we define $j(I,I')=0$ and $k(I,I')=1$.
\end{definition}

We have $0\le j(I,I')\le k(I,I')-2$, unless $(I,I')$ is saturated. Note that if $(I,I')$ is saturated, it is no longer the case that $k(I,I')=1$ necessarily lies in $I'$. We now organize the set of allowed pairs into a rooted binary tree $\cT_n$, see Definition \ref{def:T_n}.
\begin{definition}\label{def:tree_successors}
    Let $(I,I')$ be a non-saturated allowed pair, and write $j=j(I,I')$ and $k=k(I,I')$. Define the subsets $I_+, I'_+ \subset [n-1]$ by
\begin{align*}
    I_+ &= [j+1,k-1]\cup I,\\
    I'_+ &= (I' \setminus \{j\}) \cup\{j+1\}.
\end{align*}

Define also
\begin{align*}
    \ell((I,I'))&=(I,I'_+),\\
    r((I,I'))&=(I_+,I').
\end{align*}
\end{definition}

By definition, the non-empty interval $[j+1,k-1]$ is disjoint from $I$ and $I'$, so $I_+$ strictly contains $I$ and remains disjoint from $I'$. If $j\ge1$, then also by definition, we have that $j$ is the minimum element of $I'$ and that $j+1$ does not lie in $I'$ or $I$. Thus, $I'_+$ increments this minimum element by 1 and remains disjoint from $I$. If $j=0$, then by definition $1\notin I,I'$, so $I'_+$ simply adds 1 to $I'$. By construction, $\ell((I,I'))$ and $r((I,I'))$ are also allowed pairs.

\begin{definition}\label{def:T_n}
    Let $\cT_n$ be the rooted binary tree constructed iteratively as follows. The root vertex is labeled by the allowed pair $(\emptyset,\emptyset)$. Every vertex of $\cT_n$ labeled by a non-saturated allowed pair $(I,I')$ gives rise to left and right child vertices labeled by the allowed pairs $\ell((I,I'))$ and $r((I,I'))$, respectively. The $2^{n-1}$ leaves of $\cT_n$ are the saturated allowed pairs.
\end{definition}

\begin{example}
The tree $\cT_4$ is shown below.
\begin{center}
    \begin{tikzpicture}[scale=0.6,font=\footnotesize]
        \tikzset{
        solid node/.style={circle,draw,inner sep=1.2,fill=black},
        hollow node/.style={circle,draw,inner sep=1.2}
        }
        \node[solid node](1) at (0,0) {};
        \node at (0,0.5) {$(\emptyset,\emptyset)$};
        \node[solid node](2) at (-5,-1) {};
        \node at (-6,-0.5) {$(\emptyset,\{1\})$};
        \node[solid node](3) at (5,-1) {};
        \node at (5,-1.5) {$(\{1,2,3\},\emptyset)$};
        \node[solid node](4) at (-9,-2) {};
        \node at (-10,-1.5) {$(\emptyset,\{2\})$};
        \node[solid node](5) at (-1,-2) {};
        \node at (-1,-2.5) {$(\{2,3\},\{1\})$};
        \node[solid node](6) at (-12,-3) {};
        \node at (-13,-2.5) {$(\emptyset,\{3\})$};
        \node[solid node](7) at (-6,-3) {};
        \node at (-5,-2.5) {$(\{3\},\{2\})$};
        \node[solid node](8) at (-14,-4) {};
        \node at (-15,-3.5) {$(\emptyset,\{1,3\})$};
        \node[solid node](9) at (-10,-4) {};
        \node at (-11,-4.5) {$(\{1,2\},\{3\})$};
        \node[solid node](10) at (-8,-4) {};
        \node at (-7,-4.5) {$(\{3\},\{1,2\})$};
        \node[solid node](11) at (-4,-4) {};
        \node at (-3,-4.5) {$(\{1,3\},\{2\})$};
        \node[solid node](12) at (-15,-5) {};
        \node at (-16,-4.5) {$(\emptyset,\{2,3\})$};
        \node[solid node](13) at (-13,-5) {};
        \node at (-12,-5.5) {$(\{2\},\{1,3\})$};
        \node[solid node](14) at (-14,-6) {};
        \node at (-13,-6.5) {$(\{1\},\{2,3\})$};
        \node[solid node](15) at (-16,-6) {};
        \node at (-18,-6.5) {$(\{\emptyset,\{1,2,3\})$};
	\path (1) edge (2);
	\path (1) edge (3);
        \path (2) edge (4);
        \path (2) edge (5);
        \path (4) edge (6);
        \path (4) edge (7);
	\path (6) edge (8);
	   \path (6) edge (9);
	 \path (7) edge (10);
	 \path (7) edge (11);    
	 \path (8) edge (12);
	 \path (8) edge (13);
	 \path (12) edge (14);
	 \path (12) edge (15);
        
    \end{tikzpicture}
\end{center}
\end{example}

Every allowed pair appears exactly once in $\cT_n$; we identify the set of vertices of $\cT_n$ with the set of allowed pairs. Indeed, any allowed $(I,I')$ can be obtained in a unique way by starting with $(\emptyset,\emptyset)$ and iteratively either incrementing the minimum element of $I'$ (or adding 1) or adding the ``gap'' $[j+1,k-1]$ to $I$. This corresponds to the unique downward path through $\cT_n$ from the root vertex to the vertex associated to $(I,I')$.

\begin{example}\label{eg:tree_path}

Take $n=9$, and let $(I,I')$ be the saturated allowed pair $(\{1,4,5,6,8\},\{2,3,7\})$. The unique path from the root of $\cT_9$ to $(I,I')$ is:
\begin{align*}
    &(\emptyset,\emptyset) \to (\emptyset,\{1\})\to (\emptyset,\{2\})\to \cdots\to (\emptyset,\{7\}) \to (\{8\},\{7\})\\
    \to& (\{8\},\{1,7\})\to (\{8\},\{2,7\})\to (\{8\},\{3,7\}) \to (\{4,5,6,8\},\{3,7\})  \\
    \to& (\{4,5,6,8\},\{1,3,7\}) \to (\{4,5,6,8\},\{2,3,7\}) \to (\{1,4,5,6,8\},\{2,3,7\}\}).
\end{align*}  
\end{example}

Roughly speaking, the elements of $I'$ must be set in order from largest to smallest by successively taking the left fork down $\cT_n$, and the gaps between consecutive elements of $I'$ must be filled into $I$ at the appropriate times by taking the right fork.

\subsection{Matrices for allowed pairs}\label{sec:matrices}
 
Let $(I,I')$ be an allowed pair, and write $j=j(I,I')$ and $k=k(I,I')$. In order to give local coordinates for $\Sigma_{I,I'}$, we first construct an $n\times (2n+1)$ matrix $M_{I,I'}$, each of whose entries is either 0 or one of the symbols $*,+,1$. Eventually, $M_{I,I'}$ will represent a general point of $\Sigma_{I,I'}$. 

Write $I = \{\lambda_1, \ldots, \lambda_s\}$, $I' = \{\lambda'_1, \ldots, \lambda'_{s'}\}$, where $\lambda_1>\cdots>\lambda_s$ and $\lambda'_1>\cdots>\lambda'_{s'}$. We also write $\mu_{s+1}>\cdots>\mu_n=0$ and $\mu'_{s'+1}>\cdots>\mu'_n=0$ as in \S\ref{sec:schubert}. The columns of $M_{I,I'}$ will be indexed (in order) by the set $\{1,\ldots,n,0,\barn,\ldots,\bar1\}$, and the rows are indexed by the set $[n]$. 

We now set the entries of $M_{I,I'}$, beginning with those that are 0.
\begin{enumerate}
    \item[(1)] If $1 \le p \le s'$, then in the $p$-th row of $M_{I,I'}$, place the symbol 0 in the leftmost $(n+\lambda'_p)$ entries.
    \item[(2)] If $s'+1\le p \le n$, then in the $p$-th row of $M_{I,I'}$, place the symbol 0 in the leftmost $\mu'_p$ entries.
\end{enumerate}
Dually:
\begin{enumerate}
    \item[(3)] If $n \ge p \ge n-s+1$, then in the $p$-th row of $M_{I,I'}$, place the symbol 0 in the rightmost $(n+\lambda_{n+1-p})$ entries.
    \item[(4)] If $n-s\ge p \ge 1$, then in the $p$-th row of $M_{I,I'}$, place the symbol 0 in the rightmost $\mu_{n+1-p}$ entries.
\end{enumerate}
We then fill the remaining entries of $M_{I,I'}$ with the symbols $*,+,1$, as follows:
\begin{enumerate}
    \item[(5)] In the rightmost unfilled entry of each row of $M_{I,I'}$, place the symbol 1.
    \item[(6)] In all remaining unfilled entries $a^p_{\barq}$ (where $\barq\in\overline{[n]}$) and $a^p_0$, place the symbol $+$.
    \item[(7)] In all remaining unfilled entries $a^p_q$ (where $q\in[n]$), place the symbol $*$.
\end{enumerate}

\begin{example}\label{eg:MII'}
    For $n = 8$, $I = \{4,6,7\}, I' = \{1,3,5\}$, we have
        \begin{equation*}
        M_{I,I'}=
        \begin{bmatrix}
            0 & 0 & 0 & 0 & 0 & 0 & 0 & 0 & 0 & 0 & 0 & 0 & 0 & + & + & + & 1 \\
            0 & 0 & 0 & 0 & 0 & 0 & 0 & 0 & 0 & 0 & 0 & + & + & 1 & 0 & 0 & 0 \\
            0 & 0 & 0 & 0 & 0 & 0 & 0 & 0 & 0 & + & + & 1 & 0 & 0 & 0 & 0 & 0 \\
            0 & 0 & 0 & 0 & 0 & 0 & * & * & + & + & 1 & 0 & 0 & 0 & 0 & 0 & 0 \\
            0 & 0 & 0 & 0 & * & * & * & * & + & 1 & 0 & 0 & 0 & 0 & 0 & 0 & 0 \\
            0 & 0 & * & * & 1 & 0 & 0 & 0 & 0 & 0 & 0 & 0 & 0 & 0 & 0 & 0 & 0 \\
            0 & * & 1 & 0 & 0 & 0 & 0 & 0 & 0 & 0 & 0 & 0 & 0 & 0 & 0 & 0 & 0 \\
            * & 1 & 0 & 0 & 0 & 0 & 0 & 0 & 0 & 0 & 0 & 0 & 0 & 0 & 0 & 0 & 0 \\
        \end{bmatrix}
    \end{equation*}
\end{example}

To obtain points of the Richardson variety $\Sigma_{I,I'}\subset\OG(n,2n+1)$, we replace the entries $*$ and $+$ in $M_{I,I'}$ with complex numbers, to obtain a new matrix $A$. (The name $M_{I,I'}$ is reserved for the matrix of symbols constructed above, whereas $A$ will denote a matrix whose entries are complex numbers.) An entry $a^p_q$ of the new matrix $A$ is called a ``$-$ entry,'' where $-\in\{0,1,+,*\}$, if the corresponding entry of $M_{I,I'}$ is $-$. When we say that an entry $a^p_q$ of $A$ is ``non-zero,'' we mean that $a^p_q\in\bC-\{0\}$. In particular, this is stronger than saying that $a^p_q$ is not a 0-entry.

\begin{definition}
    Let $\cM_{I,I'}$ be the parameter space of $n\times (2n+1)$ matrices $A$ obtained by replacing the symbols $+,*$ in $M_{I,I'}$ with complex numbers, with the property that the rows of $A$ are orthogonal.
\end{definition}

Write $b(I,I')$ and $d(I,I')$ for the numbers of $*$- and $+$-entries of $M_{I,I'}$, respectively. Then, $\cM_{I,I'}$ is the closed affine subscheme
    \begin{equation*}
    \cM_{I,I'}\subset \bC^{b(I,I')}\times \bC^{d(I,I')}
    \end{equation*}
cut out by the equations corresponding to orthogonality of the rows of $A$.

\begin{lemma}\label{lem:AII'}
Let $A\in\cM_{I,I'}$ be a matrix. Then, the rows of $A$ are linearly independent.
\end{lemma}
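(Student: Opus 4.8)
The plan is to exhibit an explicit echelon structure in the matrix $M_{I,I'}$ which forces linear independence regardless of how the $*$- and $+$-entries are specialized. The key observation is that the positions of the $1$-entries, together with the $0$-entries to their right (rules (3),(4),(5)), give each row a well-defined "pivot" column, namely the rightmost non-zero column of that row in $M_{I,I'}$. First I would check that these pivot columns are pairwise distinct: by rules (1)--(2) the leading zeros of the rows increase with $p$, and by rules (3)--(5) the pivot in row $p$ sits exactly at the column just left of the trailing block of zeros, whose width $n+\lambda_{n+1-p}$ or $\mu_{n+1-p}$ is strictly increasing in $n+1-p$; hence the pivot column strictly decreases as $p$ increases. (In Example \ref{eg:MII'} the pivots sit in columns $\bar1,\bar4,\bar6,\bar7,\bar8,5,3,2$ reading down the rows.)

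Given distinct pivot columns $q_1 > q_2 > \cdots > q_n$ (in the column order $1,\ldots,n,0,\barn,\ldots,\barone$), the matrix $A$ has, after reordering rows so that pivots appear in decreasing order, a staircase shape: the entry $a^p_{q_p}$ is a $1$-entry, hence equal to $1$, and every entry $a^p_{q}$ with $q$ strictly to the right of $q_p$ is a $0$-entry, hence equal to $0$. Therefore the $n\times n$ submatrix $A_{\{q_1,\ldots,q_n\}}$ is (upper-)triangular with $1$'s on the diagonal once rows are ordered by decreasing pivot, so it has determinant $1$ and in particular is invertible. This immediately gives that the rows of $A$ are linearly independent.

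The one point requiring a little care — and the place I'd expect to spend the most effort — is verifying cleanly that the trailing-zero widths prescribed by rules (3),(4) really do make the pivot columns strictly decreasing, i.e.\ that the "staircase" never stalls. This amounts to the combinatorial fact that the sequence obtained by interleaving $\{n+\lambda_i : \lambda_i \in I\}$ with $\{\mu_i\}$ (the latter being the complement, as defined in \S\ref{sec:schubert}) is strictly increasing in the appropriate indexing; this is essentially the same bookkeeping that makes $\Sigma_I^F$ well-defined and of the stated codimension, so it should go through without incident. I would phrase the argument to cover the degenerate/empty cases of $I$ or $I'$ uniformly. Note also that the orthogonality constraints cutting out $\cM_{I,I'}$ play no role here: the conclusion holds for \emph{every} specialization of the $*$- and $+$-entries, which is all that is needed.
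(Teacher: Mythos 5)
Your proof is correct and is essentially the paper's own argument: the paper likewise observes that the $1$-entries occupy pairwise distinct columns and that all entries to their right are $0$-entries, from which independence is immediate. The only difference is that you additionally verify the pivots are \emph{monotonically} decreasing down the rows (true, but not needed — distinctness of the pivot columns already suffices, by looking at the rightmost pivot appearing in a vanishing linear combination).
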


\begin{proof}
    By steps (3) and (4) of the construction of $M_{I,I'}$, the 1-entries of $A$ appear, from bottom to top, in columns
    \begin{equation*}
    (n+1)-\lambda_1,\ldots,(n+1)-\lambda_s,\overline{1+\mu_{s+1}},\ldots,\overline{1+\mu_n}=\overline{1},
    \end{equation*}
    which are all distinct. The entries to the right of the 1-entries are zero. The linear independence follows.
\end{proof}
    
In particular, we obtain a map $\ssp:\cM_{I,I'}\to \OG(n,2n+1)$ given by taking row span. The positions of the 0-entries in $M_{I,I'}$ guarantee further that the image of $\ssp$ lies in the Richardson variety $\Sigma_{I,I'}$. We will see that a general point of $\cM_{I,I'}$ in fact maps to the dense open locus $\Sigma_I^{F,\circ}\cap \Sigma_{I'}^{F',\circ}$ of $\Sigma_{I,I'}$.

We obtain two maps
\begin{equation*}
    \xymatrix{
    & \cM_{I,I'} \ar[dl]_{\pr} \ar[dr]^{\ssp} & \\
    \bC^{d(I,I')} & & \Sigma_{I,I'}
    }
\end{equation*}
where $\pr$ remembers the $+$-entries of a matrix $A$. We will show the following:
\begin{enumerate}
    \item[(1)] (Proposition \ref{prop:solve_for_*}) $\pr$ is birational.
    \item[(2)] (Corollary \ref{cor:matrix_injective}) On the locus of $\cM_{I,I'}$ where $\pr$ is an isomorphism, $\ssp$ is generically injective.
\end{enumerate}

Therefore, the rational map $\phi_{I,I'}:\bC^{d(I,I')}\dashrightarrow \Sigma_{I,I'}$ given by $\ssp\circ\pr^{-1}$ will be birational, for dimension reasons (Lemma \ref{lem:dII'}). This will give the needed chart for $\Sigma_{I,I'}$.

\subsection{Structure of $M_{I,I'}$}\label{sec:M_II'_structure}

We digress to give a useful alternate description of the matrix $M_{I,I'}$. First, define the \emph{inner box} $M^{\inbox}_{I,I'}$ of $M_{I,I'}$ to be the $k \times (2k+1)$ submatrix given by removing:
\begin{itemize}
    \item the leftmost $n-k$ and rightmost $n-k$ columns of $M_{I,I'}$, and
    \item the bottom $s$ and top $s'-1$ rows of $M_{I,I'}$, unless $j=0$, in which case the top $s'$ rows are removed.
\end{itemize}
We similarly write $A^{\inbox}$ for the corresponding submatrix of a matrix $A\in\cM_{I,I'}$. In particular, when $k=n$, we have $M^{\inbox}_{I,I'}=M_{I,I'}$. Equivalently, the inner box $M^{\inbox}_{I,I'}$ consists of the entries $m^p_q$ of $M_{I,I'}$, where 
\begin{equation*}
    p \in [|I' \setminus\{j\}| +1, n-|I|]\text{ and }q \in [n-k+1, n] \cup \overline{[n-k+1, n]} \cup \{0\}
\end{equation*}
Replacing $n$ with $k$ and re-indexing rows and columns appropriately, the submatrix $M^{\inbox}_{I,I'}$ is precisely the matrix $M_{\emptyset, \{j\}}$ if $j\neq 0$, and $M_{\emptyset, \emptyset}$ if $j=0$. An example of $M^{\inbox}_{I,I'}$ is depicted in Example \ref{eg:MII'_pieces}.

Explicitly, write $b^p_q$ for the entry in the $p$-th row and $q$-column of $M^{\inbox}_{I,I'}$, where we re-index the rows of $M^{\inbox}_{I,I'}$ by the set $[k]$ and the columns of $M^{\inbox}_{I,I'}$ by the set $\{1,\ldots,k,0,\bark,\ldots, \bar1\}$. In rows $p=2,\ldots,k$, the leftmost $k-p$ entries $b^p_1,\ldots,b^p_{k-p}$ and the rightmost $p-1$ entries $b^p_{\overline{p-1}},\ldots,b^p_{\bar1}$ of $M^{\inbox}_{I,I'}$ are all 0. If in addition $j\ge p$, then $b^p_{k-p+1}=0$ as well. All remaining entries $b^p_q$ in row $p$ are not 0. In row $p=1$, the leftmost $k-1$ entries $b^1_1,\ldots,b^1_{k-1}$ are 0. If $j=0$, then the remaining entries $b^1_q$ are not 0. Otherwise, the additional $j+1$ entries
\begin{equation*}
    b^1_k,b^1_0,b^1_{\bark},\ldots,b^1_{\overline{k-j+2}}
\end{equation*}
are 0, and the remaining entries $b^1_q$ are not 0.

Consider now the complement of $M^{\inbox}_{I,I'}$ in $M_{I,I'}$. Viewing the entries of $M_{I,I'}$ as points of a rectangular lattice, the remaining entries of $M_{I,I'}$ which are not 0-entries form two lattice paths $P_{\SW}$ and $P_{\NE}$. Example \ref{eg:MII'_pieces} also depicts these lattice paths. The lattice path $P_{\SW}$ travels from the bottom left entry of $M_{I,I'}$ to the bottom left entry of $M_{I,I'}^{\inbox}$.  If $k=n$, then these two endpoints are equal, so assume for what follows that $k<n$. The $s$ upward steps occur, from left to right, in columns 
\begin{equation*}
    n+1-\lambda_1<\cdots<n+1-\lambda_s.
\end{equation*}

Similarly, the lattice path $P_{\NE}$ travels from the top right entry of $M_{I,I'}^{\inbox}$ to the top right entry of $M_{I,I'}$. Assume further that $(I,I')$ is non-saturated, so $k>1$; we describe the saturated case later. If $j\neq0$, then $s'-1$ upward steps occur, from left to right, in the distinct columns
\begin{equation*}
    \overline{n+1-k}=\overline{n+1-\lambda'_{s'-1}},\ldots,\overline{n+1-\lambda'_1}.
\end{equation*}
If instead $j=0$, then there are $s'$ upward steps, in the columns
\begin{equation*}
    \overline{n+1-k}=\overline{n+1-\lambda'_{s'}},\ldots,\overline{n+1-\lambda'_1}.
\end{equation*}
In both cases, the left-most upward step is in column $\overline{n+1-k}$. Equivalently, the first step of $P_{\NE}$, starting from the upper right corner of $M_{I,I'}^{\inbox}$, is upward. 

We refer to the columns
\begin{equation*}
    n+1-\lambda_1,\ldots,n+1-\lambda_s,\overline{n+1-\lambda'_{s'}}\text{ or }\overline{n+1-\lambda'_{s'-1}},\ldots,\overline{n+1-\lambda'_1}.
\end{equation*}
in which the upward steps of $P_{\SW}$ and $P_{\NE}$ as the \emph{special columns} of $M_{I,I'}$. Note that these columns form an admissible subset of $[2,n]\cup\overline{[2,n]}$, that is, for all $j\in[2,n]$, an upward step of $P_{\SW}$ or $P_{\NE}$ appears in at most one of $j,\barj$.

\begin{example}\label{eg:MII'_pieces}
Continuing the example of \ref{eg:MII'}, where $n=8$ and $(I,I')=(\{4,6,7\},\{1,3,5\})$, the inner box $M_{I,I'}^{\inbox}$ is outlined, and the two lattice paths $P_{\SW}$ and $P_{\NE}$ are overlaid.
\begin{equation*}
\begin{tikzpicture}[row 8 column 17/.style=black]
\matrix(m) [matrix of math nodes,left delimiter={[},right delimiter={]}]
{
            0 & 0 & 0 & 0 & 0 & 0 & 0 & 0 & 0 & 0 & 0 & 0 & 0 & + & + & + & 1 \\
            0 & 0 & 0 & 0 & 0 & 0 & 0 & 0 & 0 & 0 & 0 & + & + & 1 & 0 & 0 & 0 \\
            0 & 0 & 0 & 0 & 0 & 0 & 0 & 0 & 0 & + & + & 1 & 0 & 0 & 0 & 0 & 0 \\
            0 & 0 & 0 & 0 & 0 & 0 & * & * & + & + & 1 & 0 & 0 & 0 & 0 & 0 & 0 \\
            0 & 0 & 0 & 0 & * & * & * & * & + & 1 & 0 & 0 & 0 & 0 & 0 & 0 & 0 \\
            0 & 0 & * & * & 1 & 0 & 0 & 0 & 0 & 0 & 0 & 0 & 0 & 0 & 0 & 0 & 0 \\
            0 & * & 1 & 0 & 0 & 0 & 0 & 0 & 0 & 0 & 0 & 0 & 0 & 0 & 0 & 0 & 0 \\
            * & 1 & 0 & 0 & 0 & 0 & 0 & 0 & 0 & 0 & 0 & 0 & 0 & 0 & 0 & 0 & 0 \\
  };

\draw[purple] (m-3-6.north west) -- (m-3-12.north east) -- (m-5-12.south east) -- (m-5-6.south west) -- (m-3-6.north west);
\draw[red, thick] (m-8-1.center) -- (m-8-2.center) -- (m-7-2.center) -- (m-7-3.center) -- (m-6-3.center) -- (m-6-5.center) -- (m-5-5.center) -- (m-5-6.center);
\draw[red, thick] (m-3-12.center) -- (m-2-12.center) -- (m-2-14.center) -- (m-1-14.center) -- (m-1-17.center);
 \end{tikzpicture}
   \end{equation*}

\end{example}

When $(I,I')$ is saturated, $\lambda_{s'}=\min(I')$ is not required to equal $k(I,I')=1$, so it may no longer be the case the first step of $P_{\NE}$ is upward. The inner box $M^{\inbox}_{I,I'}$ is simply the $1\times 3$ submatrix
\begin{equation*}
\begin{bmatrix}
    * & + & 1
\end{bmatrix}
\end{equation*}
if $1\in I'$, and 
\begin{equation*}
\begin{bmatrix}
    * & + & +
\end{bmatrix}
\end{equation*}
if $1\in I$. The two lattice paths $P_{\NE}$ and $P_{\SW}$ can be concatenated via the horizontal path through $A^{\inbox}_{I,I'}$ to form a single lattice path $P_{I,I'}$. The $n-1$ columns
\begin{equation*}
    n+1-\lambda_1,\ldots,n+1-\lambda_s,\overline{n+1-\lambda'_{s'}},\ldots,\overline{n+1-\lambda'_1}\in[2,n]\cup\overline{[2,n]}
\end{equation*}
in which the upward steps occur uniquely determine $P_{I,I'}$. Again, we refer to these colums as the \emph{special columns} of $M_{I,I'}$.

\begin{example}\label{eg:saturated_path}
Taking the saturated pair $(I,I')=(\{1,4,5,6,8\},\{2,3,7\})$ of Example \ref{eg:tree_path}, the matrix $M_{I,I'}$ is shown below, with the concatenated lattice path overlaid.
\begin{equation*}
\begin{tikzpicture}[row 9 column 19/.style=black]
\matrix(m) [matrix of math nodes,left delimiter={[},right delimiter={]}]
{
    0 & 0 & 0 & 0 & 0 & 0 & 0 & 0 & 0 & 0 & 0 & 0 & 0 & 0 & 0 & 0 & + & + & 1 \\
    0 & 0 & 0 & 0 & 0 & 0 & 0 & 0 & 0 & 0 & 0 & 0 & + & + & + & + & 1 & 0 & 0 \\
    0 & 0 & 0 & 0 & 0 & 0 & 0 & 0 & 0 & 0 & 0 & + & 1 & 0 & 0 & 0 & 0 & 0 & 0 \\
    0 & 0 & 0 & 0 & 0 & 0 & 0 & 0 & * & + & + & 1 & 0 & 0 & 0 & 0 & 0 & 0 & 0 \\
    0 & 0 & 0 & 0 & 0 & * & * & * & 1 & 0 & 0 & 0 & 0 & 0 & 0 & 0 & 0 & 0 & 0 \\
    0 & 0 & 0 & 0 & * & 1 & 0 & 0 & 0 & 0 & 0 & 0 & 0 & 0 & 0 & 0 & 0 & 0 & 0 \\
    0 & 0 & 0 & * & 1 & 0 & 0 & 0 & 0 & 0 & 0 & 0 & 0 & 0 & 0 & 0 & 0 & 0 & 0 \\
    0 & * & * & 1 & 0 & 0 & 0 & 0 & 0 & 0 & 0 & 0 & 0 & 0 & 0 & 0 & 0 & 0 & 0 \\
    * & 1 & 0 & 0 & 0 & 0 & 0 & 0 & 0 & 0 & 0 & 0 & 0 & 0 & 0 & 0 & 0 & 0 & 0 \\
  };

\draw[red, thick] (m-9-1.center) -- (m-9-2.center) -- (m-8-2.center) -- (m-8-4.center) -- (m-7-4.center) -- (m-7-5.center) -- (m-6-5.center) -- (m-6-6.center) -- (m-5-6.center) -- (m-5-9.center) -- (m-4-9.center) -- (m-4-12.center) -- (m-3-12.center) -- (m-3-13.center) -- (m-2-13.center) -- (m-2-17.center) -- (m-1-17.center) -- (m-1-19.center);

 \end{tikzpicture}
   \end{equation*}
\end{example}

\subsection{The map $\pr$}

We now return to our task of describing charts for $\Sigma_{I,I'}$. Recall that we have defined $\cM_{I,I'}$ to be the affine variety of matrices of shape $M_{I,I'}$, with orthogonal rows. There is a natural projection map
\begin{equation*}
    \pr:\cM_{I,I'}\to \bC^{d(I,I')}
\end{equation*}
remembering only the $+$-entries.

\begin{proposition}\label{prop:solve_for_*}
The map $\pr$ is birational. That is, there is a non-empty Zariski open subset $V_{I,I'}\subset \bC^{d(I,I')}$ over which $\pr$ is an isomorphism.
\end{proposition}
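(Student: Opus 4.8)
The crucial observation is that the equations cutting out $\cM_{I,I'}\subset\bC^{b(I,I')}\times\bC^{d(I,I')}$---namely $A_p\cdot A_{p'}=0$ for all $1\le p\le p'\le n$, the conditions that the rows of $A$ span an isotropic subspace---are \emph{affine-linear} in the $*$-entries of $A$. Every quadratic monomial appearing in the bilinear form $y\cdot z$ pairs a coordinate indexed by $[n]$ with one indexed by $\overline{[n]}\cup\{0\}$, or pairs the $0$-coordinate with itself; and by steps (5)--(7) of the construction of $M_{I,I'}$, every $*$-entry of $A$ lies in a column indexed by $[n]$, while every $+$-entry---and every $1$-entry except the one pivot per row---lies in a column indexed by $\overline{[n]}\cup\{0\}$. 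Hence each monomial of $A_p\cdot A_{p'}$ contains at most one $*$-factor, so $A_p\cdot A_{p'}$ is affine-linear in the $*$-entries, with coefficients that are $+$-entries, $\pm1$ (from a $1$-pivot), or $0$, and with constant term a polynomial in the $+$-entries alone. Thus, for $\vec y\in\bC^{d(I,I')}$, the fibre $\pr^{-1}(\vec y)\subset\bC^{b(I,I')}$ is the solution set of an affine-linear system $C(\vec y)\,\vec x=v(\vec y)$ with $C,v$ polynomial in $\vec y$. To prove $\pr$ birational I would therefore show (a) $\pr$ is dominant, and (b) $C(\vec y)$ has full column rank $b(I,I')$ for generic $\vec y$; together these force the generic fibre to be a single reduced point.

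For (b), the plan is to exhibit a block-triangular procedure for solving the system, organised by the decomposition of $M_{I,I'}$ in \S\ref{sec:M_II'_structure}. Working from the outside in, one peels off the rows along the lattice paths $P_{\SW}$ and $P_{\NE}$---the bottom $s$ rows and the top $s'$ (or $s'-1$) rows---reducing to the inner box $M^{\inbox}_{I,I'}$, which after reindexing is one of the base cases $M_{\emptyset,\{j\}}$, $M_{\emptyset,\emptyset}$; the latter is the standard ``big cell'' affine chart of $\OG(n,2n+1)$. Within a given row $p$, I would solve the $*$-entries from right to left: the $*$-entry in column $q$ is read off from $A_p\cdot A_{p'}=0$, where $p'$ is the unique row whose $1$-pivot lies in column $\overline q$ when such a row exists; from the self-orthogonality equation $A_p\cdot A_p=0$ when $\overline q$ is the pivot column of $p$ itself; and from pairings with the ``$I$-row'' carrying a positive pivot in column $q$ in the remaining cases. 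In this last case the $*$-entries need not decouple one at a time but group into small blocks, solved simultaneously; each block's coefficient matrix has entries among the $+$-entries and $\pm1$, and its determinant is a polynomial in the $+$-entries which I would check is not identically zero by specialising the off-block $+$-entries to $0$, whereupon the matrix becomes triangular with diagonal entries $\pm1$ or single $+$-coordinates. Carrying this out in the prescribed order shows $C(\vec y)$ attains rank $b(I,I')$ generically. (Equivalently, one can set this up as an induction on $k(I,I')$, with the two base cases treated by hand.)

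For (a), it suffices to know that $\ssp$ is dominant onto $\Sigma_{I,I'}$---that the general point of the Richardson variety admits a matrix representative of shape $M_{I,I'}$, this being the echelon form adapted simultaneously to the two flags $F,F'$---since then $\dim\cM_{I,I'}\ge\dim\Sigma_{I,I'}=d(I,I')$. Since the full system is linear in the $*$-entries, the $b(I,I')$ equations used in (b) already cut out an irreducible variety $\cM'_{I,I'}\supseteq\cM_{I,I'}$ of dimension $d(I,I')$ on which $\pr$ restricts to a birational morphism; as $\cM_{I,I'}$ is then a non-empty closed subscheme of $\cM'_{I,I'}$ of dimension $\ge d(I,I')$, it must equal $\cM'_{I,I'}$, so $\pr$ is birational.

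I expect the main obstacle to be the bookkeeping in (b): producing, uniformly over all allowed pairs, the right order in which to eliminate the $*$-entries and the right orthogonality equation at each step, and in particular handling the $*$-entries lying in positive pivot columns---for which the naive pairing of two $I$-rows yields an identically zero equation---by grouping them into blocks with generically invertible coefficient matrices. This is precisely the role of the inner-box and lattice-path structure of \S\ref{sec:M_II'_structure}, and reducing the analysis cleanly to the base cases $M_{\emptyset,\emptyset}$ and $M_{\emptyset,\{j\}}$ is where the essential work lies.
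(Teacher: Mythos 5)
Your high-level framework is the right one and matches the paper's: the orthogonality relations are affine-linear in the $*$-entries (since every $*$ sits in a column of $[n]$ and every $+$ in a column of $\overline{[n]}\cup\{0\}$), so one wants to exhibit, row by row, a square subsystem whose coefficient determinant is a not-identically-zero polynomial in the $+$-entries, reducing via the inner-box/lattice-path decomposition to the base cases. But the concrete elimination scheme you sketch for the inner box does not work, and it is precisely the step you defer as ``where the essential work lies.'' To solve for the $*$-entry $a^p_q$ you propose pairing row $p$ with the row whose $1$-pivot lies in column $\overline{q}$; in the inner box that row is row $q$, which for $q>p$ lies \emph{below} row $p$, so the equation $v_p\cdot v_q=0$ involves all the still-unknown $*$-entries of row $q$ (with generically nonzero coefficients $a^p_{\overline{q'}}$) and does not decouple, even into small blocks. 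The paper's mechanism is different: for each $p$, in increasing order, it takes the $p$ (or $p-1$) equations $v_{p'}\cdot v_p=0$ with $p'$ ranging over the rows \emph{above and including} row $p$, whose $*$-entries are already known by induction; the resulting coefficient matrix is the literal submatrix $A_p$ of $A$ with rows $[1,p]$ (or $[2,p]$) and columns $\overline{[n-p+1,n]}$ (or $\overline{[n-p+2,n]}$), and its determinant is not identically zero because its anti-diagonal contains no $0$-entries. That identification is the substance of the proof and is absent from your proposal. (Your scheme is essentially correct for the $*$-entries along $P_{\SW}$ in the saturated case, where the relevant pairing rows do lie above and the entries genuinely decouple one at a time.)

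There is also a circularity in your step (a). You justify $\dim\cM_{I,I'}\ge d(I,I')$ by asserting that $\ssp$ is dominant onto $\Sigma_{I,I'}$, i.e.\ that a general point of the Richardson variety admits a representative of shape $M_{I,I'}$ --- but in the paper that statement (Corollary \ref{cor:chart}) is deduced \emph{from} Proposition \ref{prop:solve_for_*} together with the injectivity of $\ssp$, not assumed beforehand. The paper avoids this by checking directly that the orthogonality equations not used in the elimination (the pairings $v_1\cdot v_1,\ldots,v_1\cdot v_j$ in the inner box, and the pairings of a $P_{\SW}$ row with rows outside $R_p$ in the saturated case) vanish identically for any choice of entries, so the unique solution of the square subsystem automatically lies in $\cM_{I,I'}$ and the fibers of $\pr$ over $V_{I,I'}$ are single reduced points. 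You would need either that verification or an independent proof of dominance of $\ssp$.
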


We will prove Proposition \ref{prop:solve_for_*} by showing that, for a general choice of $+$-entries of a matrix $A\in\cM_{I,I'}$, there is a unique choice of $*$-entries so that the rows of $A$ are orthogonal. This amounts to solving a system of linear equations in the $*$-entries whose coefficients are the chosen $+$-entries, and the open subset $V_{I,I'}$ will be simply the locus where the determinants of the coefficient matrices do not vanish.

We will first prove Proposition \ref{prop:solve_for_*} in two special cases (Propositions \ref{prop:solve_for_*_box} and \ref{prop:solve_for_*_saturated}), and then combine the arguments to prove the claim in general.

\begin{proposition}\label{prop:solve_for_*_box}
    Proposition \ref{prop:solve_for_*} holds when $I=\emptyset$ and either $I'=\emptyset$ or $I'=\{j\}$, where $j\in[1,n-2]$. 
\end{proposition}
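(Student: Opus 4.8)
The plan is to exhibit $\cM_{I,I'}$, over a suitable dense open $V_{I,I'}\subset\bC^{d(I,I')}$, as the graph of a morphism recording the $*$-entries as algebraic functions of the $+$-entries. The key structural observation is that every orthogonality relation $u^{(p)}\cdot u^{(p')}=0$ is \emph{affine-linear} in the $*$-entries: the symmetric form pairs a column $q\in[n]$ only with $\bar q$ (and $0$ with $0$), while all $*$-entries of $M_{I,I'}$ lie in columns $[n]$ and all $+$-entries in columns $\overline{[n]}\cup\{0\}$, so no product of two $*$-entries ever appears. Hence the fiber of $\pr$ over a point $v$ is the solution set of an affine-linear system in the $*$-variables whose coefficients and constant terms are polynomials in $v$, and it suffices to show that for general $v$ this system has a unique solution.

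I would solve the system row by row, from $p=1$ down to $p=n$. Writing $E_{ip}:=u^{(i)}\cdot u^{(p)}$ and assuming rows $1,\dots,p-1$ already solved, the equations $E_{1p},\dots,E_{pp}$ become affine-linear in the $*$-entries of row $p$ alone: in $E_{ip}=2a^i_0a^p_0+\sum_q(a^i_qa^p_{\bar q}+a^i_{\bar q}a^p_q)$ the unknown $a^p_s$ (with $s\in[n]$) appears only in $a^i_{\bar s}a^p_s$, so its coefficient is $a^i_{\bar s}$ (for $i<p$) or $2a^p_{\bar s}$ (for $i=p$), all other terms being known. For $M_{\emptyset,\emptyset}$, row $p$ has exactly $p$ free $*$-entries (columns $n-p+1,\dots,n$) and the batch $\{E_{1p},\dots,E_{pp}\}$ has exactly $p$ equations; moreover these batches use up all $\binom{n+1}{2}$ orthogonality relations, so there are no leftover conditions. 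For $M_{\emptyset,\{j\}}$ one first checks that the $j$ relations $E_{1p}$ ($p=1,\dots,j$) vanish identically — row $1$ is supported on the columns $\overline{[1,n-j+1]}$, whose form-partners $[1,n-j+1]$ are identically zero in rows $1,\dots,j$ — and, discarding these, the remaining batches again have matching sizes ($p-1$ equations and unknowns for $2\le p\le j$, and $p$ of each for $j+1\le p\le n$) with no leftover conditions. In either case, let $C^{(p)}(v)$ be the square coefficient matrix of the $p$-th batch.

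Granting that each $\det C^{(p)}$ is not identically zero, forward substitution expresses the $*$-entries as regular functions of $v$ on $V_{I,I'}:=\{v:\prod_p\det C^{(p)}(v)\neq0\}$ (at each step $C^{(p)}(v)$ is invertible and the right-hand side depends only on $v$ and already-computed $*$-entries), and over $V_{I,I'}$ the scheme $\cM_{I,I'}$ is exactly this graph, so $\pr$ is an isomorphism there. To see $\det C^{(p)}\not\equiv0$, note from the shape of $M_{I,I'}$ that $C^{(p)}_{i,s}\neq0$ forces $s\ge i$ (and, for the exceptional row $i=1$ present when $j\ge1$, that $C^{(p)}_{1,s}\neq0$ forces $s\le n-j+1$). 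Taking the system of distinct representatives $i\mapsto n-p+i$, every matched entry $C^{(p)}_{i,n-p+i}$ is nonzero; when $p=n$ the matrix is upper-triangular with diagonal $1,\dots,1,2$, so $\det C^{(n)}=2$, and when $p<n$ the matched product is a monomial $2\prod_ia^i_{\overline{n-p+i}}$ in pairwise distinct free $+$-variables that no other permutation of $C^{(p)}$ can reproduce (each variable occupies a unique position), hence survives with coefficient $\pm1$ in $\det C^{(p)}$.

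I expect the main obstacle to be the exact combinatorial bookkeeping of the middle step: matching the batch sizes to the numbers of free $*$-entries requires carefully tracking the slightly different row shapes of $M_{\emptyset,\emptyset}$ and $M_{\emptyset,\{j\}}$ (and accounting precisely for the $j$ identically-vanishing relations), and one must confirm the staircase nonzero-pattern of each $C^{(p)}$, including the atypical first row when $I'=\{j\}$, so that the system-of-distinct-representatives and surviving-monomial arguments go through uniformly.
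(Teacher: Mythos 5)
Your proposal is correct and takes essentially the same route as the paper: solving row by row for the $*$-entries via square affine-linear systems whose coefficient matrices are generically invertible (the paper's $A_p$ are your $C^{(p)}$ up to the factor of $2$ in the self-pairing row), with the anti-diagonal/SDR monomial argument certifying non-vanishing of the determinants and the relations $v_1\cdot v_p$ for $p\le j$ checked to vanish identically. The only cosmetic difference is that the paper handles rows $2,\dots,j$ and $j+1,\dots,n$ as two separate cases and notes the identically-vanishing relations at the end, whereas you discard them up front and treat all rows uniformly.
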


\begin{proof}
As in Definition \ref{def:jk}, write $j=j(I,I')=0$ if $I'=\emptyset$, and otherwise let $j$ be the unique element of $I'$.

Write $v_p$ for the row vector in the $p$-th row of $A$, and write $v_p^{*}$ for the vector whose coordinates are the $*$-entries of $v_p$. We claim that the vectors $v_p^{*}$ can be uniquely solved for by pairing with the appropriate rows above. More precisely:
\begin{enumerate}
    \item[(1)] for $p=2,\ldots,j$, in increasing order, we solve for the $(p-1)$ $*$-entries of $v_p$ by considering the system of $p-1$ equations
    \begin{equation*}
        v_2\cdot v_p, v_3\cdot v_p,\ldots,v_p\cdot v_p=0.
    \end{equation*}
    \item[(2)] for $p=j+1,\ldots,n$, in increasing order, we solve for the $p$ $*$-entries of $v_p$ by considering the system of $p$ equations
    \begin{equation*}
        v_1\cdot v_p, v_2\cdot v_p,\ldots,v_p\cdot v_p=0.
    \end{equation*}
\end{enumerate}
Case (1) only applies when $j\ge2$. In case (1), the system of equations takes the form
\begin{equation*}
    A_p\cdot v_p^{*}=\gamma_p,
\end{equation*}
where $A_p$ is the submatrix of $A$ with row set $[2,p]$ and column set $\overline{[n-p+2,n]}$, and $\gamma_p\in\bC^{p-1}$ is a vector depending on $A$ and the $*$-entries solved for in the earlier rows. In case (2), $A_p$ is the submatrix of $A$ with row set $[1,p]$ and column set $\overline{[n-p+1,n]}$, and $\gamma_p\in\bC^{p}$ is again a vector depending on $A$ and the $*$-entries solved for in the earlier rows. The columns of $A_p$ are taken in the same order as they appear in $A$, so that column $\barn$ appears at the far left. See Example \ref{eg:innerbox} for a depicted example of the matrices $A_p$. The vector $v_p^{*}$ is now taken to be a column vector, with entries appearing from top to bottom with the \emph{right-most} $*$-entry $a^p_n$ on top.

We claim that, in both cases, the anti-diagonal of $A_p$ contains no 0-entries. This will imply that the determinant of $A_p$, viewed as a function of the $+$-entries of $A$, is not identically zero. Therefore, as long as the $+$-entries are chosen such that $\det(A_p)\neq0$ for all $p$, there will exist a unique solution $v_p^{*}$ for each $p$. We take $V_{I,I'}\subset (\bC-\{0\})^{d(I,I')}$ to be the non-empty Zariski open subset defined by the non-vanishing of the $\det(A_p)$.

By construction, any entry immediately to the southwest of a non-0-entry is also not a 0-entry. Therefore, to verify the claim, it suffices to check that the upper right entry of every $A_p$ is not a 0-entry. In increasing order of $p$, these upper right entries are:
\begin{equation*}
    a^2_{\barn},\ldots,a^2_{\overline{n-j+2}},a^1_{\overline{n-j}},\ldots,a^1_{\bar1}.
\end{equation*}
By construction, all of the above are $+$-entries, except $a^1_{\bar1}=1$, as needed.

Therefore, for a general choice of $+$-entries in $V_{I,I'}$, the vectors $v_p^{*}$ can be solved for uniquely, determining a unique matrix $A$. It suffices now to observe that the remaining dot products
\begin{equation*}
    v_1\cdot v_1,\ldots,v_1\cdot v_j
\end{equation*}
are automatically zero, for \emph{any} choice of $+$- and *-entries of $A$. Therefore, the matrix $A$ in fact lies in $\cM_{I,I'}$.
\end{proof}

\begin{example}\label{eg:innerbox}
The matrix $M_{I,I'}$ for $(I,I')=(\emptyset,\{3\})$. The positions of the submatrices $A_p$ from the proof of Proposiiton \ref{prop:solve_for_*_box}, $p=2,\ldots,6$, are outlined.
\begin{equation*}
\begin{tikzpicture}[row 6 column 13/.style=black]
\matrix(m) [matrix of math nodes,left delimiter={[},right delimiter={]}]
{
 0 & 0 & 0 & 0 & 0 & 0 & 0 & 0 & 0 & + & + & + & 1\\
 0 & 0 & 0 & 0 & 0 & * & + & + & + & + & + & 1 & 0\\
 0 & 0 & 0 & 0 & * & * & + & + & + & + & 1 & 0 & 0\\
 0 & 0 & * & * & * & * & + & + & + & 1 & 0 & 0 & 0\\
 0 & * & * & * & * & * & + & + & 1 & 0 & 0 & 0 & 0\\
 * & * & * & * & * & * & + & 1 & 0 & 0 & 0 & 0 & 0\\
  };

\draw[purple] (m-1-8.north west) -- (m-1-13.north east) -- (m-6-13.south east) -- (m-6-8.south west) -- (m-1-8.north west);
\draw[purple] (m-1-12.north east) -- (m-5-12.south east) -- (m-5-8.south west);
\draw[purple] (m-1-11.north east) -- (m-4-11.south east) -- (m-4-8.south west);
\draw[purple] (m-2-8.north west) -- (m-2-9.north east) -- (m-3-9.south east) -- (m-3-8.south west);
\draw[purple] (m-2-8.north east) -- (m-2-8.south east) -- (m-2-8.south west);
  
 \end{tikzpicture}
 \end{equation*}
\end{example}

\begin{proposition}\label{prop:solve_for_*_saturated}
    Proposition \ref{prop:solve_for_*} holds when $(I,I')$ is saturated. In fact, taking $V_{I,I'}\subset\bC^{d(I,I')}$ to be the locus where all of the $+$-entries are non-zero, $\pr$ is an isomorphism over $V_{I,I'}$. Furthermore, for any $A\in\pr^{-1}(V_{I,I'})$, all of the $*$-entries of $A$ are non-zero. 
\end{proposition}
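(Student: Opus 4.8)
The plan is, parallel to the proof of Proposition \ref{prop:solve_for_*_box}, to solve for the $*$-entries of a matrix $A$ of shape $M_{I,I'}$ in terms of its $+$-entries, one row at a time from top to bottom, while keeping track of the dependence precisely enough to see that each $*$-entry is forced to equal a signed Laurent monomial in the $+$-entries. Such a monomial is nonzero exactly when all of its variables are, which gives at once that $\pr$ restricts to an isomorphism over the locus $V_{I,I'}$ where all $+$-entries are nonzero — the inverse being assembled from these monomials, hence a morphism on $V_{I,I'}$ — and that every $*$-entry of the resulting matrix $A$ is nonzero.

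First, partition the orthogonality conditions $v_l\cdot v_p=0$ ($1\le l\le p\le n$) into the \emph{inert} ones, in which every monomial $a^l_qa^p_{\overline q}$, $a^l_{\overline q}a^p_q$ or $a^l_0a^p_0$ has a factor in a $0$-position of $M_{I,I'}$, and the remaining \emph{active} ones; the inert conditions hold identically. Using the description of $M_{I,I'}$ from \S\ref{sec:M_II'_structure} — in the saturated case the inner box is the degenerate $1\times 3$ block, so everything is governed by the single lattice path $P_{I,I'}$ — one verifies that for each row $p$ the number of active conditions $v_l\cdot v_p=0$ with $l\le p$ equals the number of $*$-entries in row $p$, and that, taken in order of decreasing $l$, each such condition involves exactly one $*$-entry of $v_p$ not occurring in the earlier ones. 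Concretely, the $*$-entry of $v_p$ in column $q\in[n]$ is resolved by pairing it, in a suitable condition, with a $1$-entry or a $+$-entry of some row $l\le p$; the first condition, which produces an inhomogeneous term, also picks up the monomial $(a^p_0)^2$ or a product of a $1$-entry with a $+$-entry. It is the admissibility of the special columns (the columns of the upward steps of $P_{I,I'}$) that prevents two $*$-entries of the same row from being resolved by the same higher row.

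Granting this, for fixed $p$ and with rows $1,\ldots,p-1$ already determined, the active conditions for $v_p$ form a linear system in the $*$-entries of $v_p$ which, ordering those entries by column from right to left and the conditions by decreasing $l$, is lower-triangular with diagonal entries among $1$-entries of $A$, $+$-entries of $A$ (possibly scaled by $2$), and the constant $2$ — all nonvanishing on $V_{I,I'}$ — and with inhomogeneous part a single monomial in the $+$-entries. Back-substitution then expresses each $*$-entry of $v_p$ as $\pm$ a Laurent monomial in the $+$-entries, the denominator being a product of diagonal $+$-entries. Uniqueness of the solution shows that $A$ is determined by its $+$-entries over $V_{I,I'}$, and the constructed $A$ lies in $\cM_{I,I'}$ since the active conditions hold by construction and the inert ones identically. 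I expect the main obstacle to be the bookkeeping in the second paragraph — matching active conditions with $*$-entries row by row and confirming the ``one new unknown at a time'' triangular shape — which should come down to a careful walk along $P_{I,I'}$, reading off the locations of its upward steps (the special columns) and its northeast corners (the $1$-entries) relative to the $*$- and $+$-entries.
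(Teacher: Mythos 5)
Your proposal takes essentially the same route as the paper's proof: the paper also solves row by row, organizing the active orthogonality conditions for row $p$ into a bidiagonal system $A_p v_p^{*}=0$ whose nonzero entries lie along $P_{\NE}$ (with the self-pairing $v_{n-s}\cdot v_{n-s}=0$ supplying the inhomogeneous $(a^p_0)^2$-type term for the bottom row of the inner box), so that back-substitution from the $1$-entry expresses each $*$-entry as a signed Laurent monomial in the $+$-entries, giving uniqueness and nonvanishing simultaneously. The bookkeeping you defer --- matching active conditions to $*$-entries and confirming the one-new-unknown-per-equation shape --- is exactly what the paper carries out via the submatrices $A_p$ with row set $R_p$ and the alternating rightward/upward structure of the lattice path, so your plan is correct and complete in outline.
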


\begin{proof}
As in the proof of Proposition \ref{prop:solve_for_*_box}, write $v_p$ for the row vector in the $p$-th row of $A$. The $*$-entries of $A$ only appear in rows $p=n-s,n-s+1,\ldots,n$. Write now $v_p^{*}$ for the vector whose coordinates are the $*$-entries of $v_p$, \emph{in addition to the 1-entry at the far right}.

Suppose first that $p\ge n-s+1$. The $*$-entries of $v_p$ appear in columns 
\begin{equation*}
    n-(\lambda_{n-p}-1),\ldots,n-\lambda_{n-p+1},
\end{equation*}
 and the 1-entry appears in column $(n+1)-\lambda_{n-p+1}$. Let $R_p$ be the set of rows which contain non-0-entries in at least one of the opposite columns
\begin{equation*}
    \overline{(n+1)-\lambda_{n-p}},\ldots,\overline{(n+1)-\lambda_{n-p+1}}.
\end{equation*}
Let $A_p$ be the matrix of size $(\lambda_{n-p}-\lambda_{n-p+1})\times (\lambda_{n-p}-\lambda_{n-p+1}+1)$ with row set $R_p$ and column set $\overline{[(n+1)-\lambda_{n-p},\ldots,(n+1)-\lambda_{n-p+1}]}$. See Example \ref{eg:saturated_submatrix} for a depicted example of the matrices $A_p$.

More precisely, the non-0-entries of $A_p$ form a subset of the lattice path $P_{\NE}$, alternating between rightward and upward steps, starting with a rightward step from southwest corner, and ending in a rightward step to the northeast corner. The upward steps are those in the columns in the interval $\overline{[n+2-\lambda_{n-p},n-\lambda_{n-p+1}]}$. In particular, if $\lambda_{n-p}-\lambda_{n-p+1}=1$, then this interval is empty. Accordingly, $R_p$ consists of a unique row, and no upward step occurs in columns $\overline{n+2-\lambda_{n-p}},\overline{n-\lambda_{n-p+1}}$. Each rightward step in $A_p$ starts from a $+$-entry and ends on a 1-entry.

Then, $v_p$ is orthogonal to every row of $A$ if and only if $A_p v_p^{*}=0$. Note that this equation does not depend on the values of the $*$-entries outside of the $p$-th row of $A$. By the structure of $A_p$ described above, there is a unique such vector $v_p^{*}$ with $A_p v_p^{*}=0$ for \emph{any} choice of non-zero $+$ entries of $A$, and all of its coordinates non-zero. Indeed, each $*$-entry in $v_p$ is determined by the entry immediately to the right by pairing with the appropriate row in $R_p$.

Similarly, when $p=n-s$, one can solve uniquely for the entries of $v_{n-s}^{*}$, all non-zero, from the equations
\begin{equation*}
    v_{n-s}\cdot v_{n-s}=0,\ldots,v_{n-s}\cdot v_{(n+1)-s-\lambda_s}=0.
\end{equation*}
The vectors $v_p^{*}$ for $p=n-s,\ldots,n$ uniquely determine the matrix $A$, and all $*$-entries are non-zero. Furthermore, $v_p$ is automatically orthogonal to any row outside $R_p$, so all of the rows of $A$ are orthogonal. This completes the proof.
\end{proof}

    \begin{example}\label{eg:saturated_submatrix}
Taking the saturated pair $(I,I')=(\{1,4,5,6,8\},\{2,3,7\})$ of Examples \ref{eg:tree_path} and \ref{eg:saturated_path}, the positions of the submatrices $A_p$ from the proof of Proposition \ref{prop:solve_for_*_saturated}, $p=5,6,7,8,9$, are outlined. For $p=6,7$, the matrices $A_p$ are the $1\times 2$ matrices supported in row 2. The $*$-entry in row 4 is determined by $v_4\cdot v_4=0$.
\begin{equation*}
\begin{tikzpicture}[row 9 column 19/.style=black]
\matrix(m) [matrix of math nodes,left delimiter={[},right delimiter={]}]
{
    0 & 0 & 0 & 0 & 0 & 0 & 0 & 0 & 0 & 0 & 0 & 0 & 0 & 0 & 0 & 0 & + & + & 1 \\
    0 & 0 & 0 & 0 & 0 & 0 & 0 & 0 & 0 & 0 & 0 & 0 & + & + & + & + & 1 & 0 & 0 \\
    0 & 0 & 0 & 0 & 0 & 0 & 0 & 0 & 0 & 0 & 0 & + & 1 & 0 & 0 & 0 & 0 & 0 & 0 \\
    0 & 0 & 0 & 0 & 0 & 0 & 0 & 0 & * & + & + & 1 & 0 & 0 & 0 & 0 & 0 & 0 & 0 \\
    0 & 0 & 0 & 0 & 0 & * & * & * & 1 & 0 & 0 & 0 & 0 & 0 & 0 & 0 & 0 & 0 & 0 \\
    0 & 0 & 0 & 0 & * & 1 & 0 & 0 & 0 & 0 & 0 & 0 & 0 & 0 & 0 & 0 & 0 & 0 & 0 \\
    0 & 0 & 0 & * & 1 & 0 & 0 & 0 & 0 & 0 & 0 & 0 & 0 & 0 & 0 & 0 & 0 & 0 & 0 \\
    0 & * & * & 1 & 0 & 0 & 0 & 0 & 0 & 0 & 0 & 0 & 0 & 0 & 0 & 0 & 0 & 0 & 0 \\
    * & 1 & 0 & 0 & 0 & 0 & 0 & 0 & 0 & 0 & 0 & 0 & 0 & 0 & 0 & 0 & 0 & 0 & 0 \\
  };

\draw[purple] (m-2-11.north west) -- (m-2-14.north east) -- (m-4-14.south east) -- (m-4-11.south west) -- (m-2-11.north west);

 \draw[purple] (m-2-14.north west) -- (m-2-15.north east) -- (m-2-15.south east) -- (m-2-14.south west) -- (m-2-14.north west);
 
  \draw[purple] (m-2-15.north west) -- (m-2-16.north east) -- (m-2-16.south east) -- (m-2-15.south west) -- (m-2-15.north west);

\draw[purple] (m-1-16.north west) -- (m-1-18.north east) -- (m-2-18.south east) -- (m-2-16.south west) -- (m-1-16.north west);

  \draw[purple] (m-1-18.north west) -- (m-1-19.north east) -- (m-1-19.south east) -- (m-1-18.south west) -- (m-1-18.north west);

 \end{tikzpicture}
   \end{equation*}

\end{example}

\begin{proof}[Proof of Proposition \ref{prop:solve_for_*}]
Consider a general choice of $+$-entries for a matrix $A\in\cM_{I,I'}$. Repeating the argument of Proposition \ref{prop:solve_for_*_box}, the $*$-entries inside $A^{\inbox}$ may be solved for uniquely. The remaining $*$-entries of $A$, all lying in the lattice path $P_{\SW}$, may be solved for uniquely by repeating the argument of Proposition \ref{prop:solve_for_*_saturated}, pairing the rows of $A$ containing points of $P_{\SW}$ with rows lying above $A^{\inbox}$. The second step includes in particular the $*$-entries in row $n-s$ (the bottom row of $A^{\inbox}$) but outside the inner box.
\end{proof}

\subsection{The map $\ssp$}

\begin{proposition}\label{prop:*_nonzero}
Let $(I,I')$ be an allowed pair. For some non-empty Zariski open subset $W_{I,I'}\subset V_{I,I'}\subset\bC^{d(I,I')}$ of $+$-entries, the following holds. For any $a\in W_{I,I'}$, write $A=\pr^{-1}(a)\in\cM_{I,I'}$. Then, the left-most non-0-entry in each row of $A$ is not equal to 0.
\end{proposition}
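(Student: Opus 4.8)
The plan is to reduce Proposition \ref{prop:*_nonzero} to the two special cases already analyzed in Propositions \ref{prop:solve_for_*_box} and \ref{prop:solve_for_*_saturated}, following the same inner-box / lattice-path decomposition used in the proof of Proposition \ref{prop:solve_for_*}. First I would fix an allowed pair $(I,I')$ and dispose of the rows whose left-most non-0-entry is already forced to be a $1$- or a $+$-entry, since for those rows there is nothing to prove once we are inside $V_{I,I'}$ (on which all $+$-entries are non-zero, after possibly shrinking $V_{I,I'}$; note that in the box and saturated cases $V_{I,I'}$ was taken inside $(\bC-\{0\})^{d(I,I')}$ already, and in the general case one simply intersects with that locus). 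The only rows that require argument are those whose left-most non-0-entry is a $*$-entry, i.e.\ the rows meeting the lattice path $P_{\SW}$ below and to the left of the inner box, together with the rows of $A^{\inbox}$ whose left-most entry is a $*$ (these are exactly the rows indexed in the re-indexed inner box by $p = 2,\ldots,k$, whose left-most non-0-entry is $b^p_{k-p+1}$, which is a $*$-entry unless $j \geq p$).

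For the inner-box rows I would invoke directly the proof of Proposition \ref{prop:solve_for_*_box}: there, for $p = j+1,\ldots,n$, the vector $v_p^*$ of $*$-entries is solved from the square system $A_p \cdot v_p^* = \gamma_p$, where $A_p$ has non-zero anti-diagonal. The left-most $*$-entry of row $p$ sits in a distinguished coordinate of $v_p^*$, and the claim is that this coordinate of the unique solution is generically non-zero. This is a non-vanishing statement for a single rational function of the $+$-entries (a ratio of determinants, by Cramer's rule), so it suffices to exhibit one point of $(\bC-\{0\})^{d(I,I')}$ at which it is non-zero; I would produce such a point by a direct specialization, or equivalently argue that the corresponding minor is not identically zero by a combinatorial/anti-diagonal argument analogous to the non-vanishing of $\det(A_p)$. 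For the $P_{\SW}$ rows, the cleaner route is to mimic Proposition \ref{prop:solve_for_*_saturated}, where the relevant submatrices $A_p$ are built from alternating rightward/upward steps along a lattice path and where it was shown that the \emph{entire} solution vector $v_p^*$ has all coordinates non-zero for \emph{any} choice of non-zero $+$-entries; in particular the left-most $*$-entry is non-zero. Since each $*$-entry of such a row is determined by the entry immediately to its right by pairing with a single row, this propagation forces non-vanishing all the way to the left-most $*$-entry, with no genericity needed beyond $V_{I,I'}$.

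Assembling these, I would define $W_{I,I'} \subset V_{I,I'}$ to be the (finite) intersection over the finitely many inner-box rows $p$ of the loci where the relevant Cramer ratio is non-zero; for the $P_{\SW}$ rows no further shrinking is required. Each such locus is a non-empty Zariski open subset of $V_{I,I'}$ by the non-vanishing argument above, hence $W_{I,I'}$ is non-empty and open, and by construction every row of $A = \pr^{-1}(a)$ with $a \in W_{I,I'}$ has non-zero left-most non-0-entry. The main obstacle I anticipate is the inner-box case: one must check that the \emph{particular} coordinate of the solution $v_p^*$ corresponding to the left-most $*$-entry does not vanish identically in the $+$-entries, rather than merely that $\det(A_p) \neq 0$. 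Concretely this amounts to showing a certain minor of $A_p$ (the numerator in Cramer's rule for that coordinate) is a non-zero polynomial; I expect this again follows from an anti-diagonal / lattice-path argument identifying a non-zero monomial, possibly after noting that the ``earlier-row'' contributions collected in $\gamma_p$ do not conspire to cancel — a point that should be handled by induction on $p$ exactly as the solvability itself was handled in Proposition \ref{prop:solve_for_*_box}. The $P_{\SW}$ rows, by contrast, I expect to be entirely routine given Proposition \ref{prop:solve_for_*_saturated}.
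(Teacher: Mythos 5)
Your reduction is the same as the paper's: split $A$ into the inner box $A^{\inbox}$ and the lattice path $P_{\SW}$, handle the $P_{\SW}$ rows by the propagation argument of Proposition \ref{prop:solve_for_*_saturated} (which indeed needs no genericity beyond the $+$-entries being non-zero), and reduce the rest to the cases $(\emptyset,\emptyset)$ and $(\emptyset,\{j\})$. That part is correct and matches the paper.

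The gap is exactly where you flag it: the inner-box rows. You propose to write the left-most $*$-entry of $v_p^{*}$ as a Cramer ratio and to show the numerator is not identically zero ``by an anti-diagonal argument,'' but this numerator is the determinant of $A_p$ with one column replaced by $\gamma_p$, and the entries of $\gamma_p$ are not independent indeterminates --- they are complicated rational functions of the $+$-entries built from the $*$-entries already solved for in earlier rows. There is no visible non-zero monomial to extract, and your suggestion that cancellation ``should be handled by induction on $p$ exactly as the solvability itself was handled'' does not go through: solvability only required $\det(A_p)\neq 0$, which is a statement about a submatrix with genuinely free $+$-entries, whereas here you need to control the inhomogeneous term. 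The paper's proof of Proposition \ref{prop:*_nonzero_box} avoids Cramer's rule entirely and instead argues geometrically: if the left-most $*$-entry of $v_p$ vanished, then $\dim(\Lambda\cap (F'_{n-p+1})^\perp)\ge p$, and since a maximal isotropic subspace of the $(2p-1)$-dimensional quadratic space $(F'_{n-p+1})^\perp/F'_{n-p+1}$ has dimension at most $p-1$, this forces $\dim(\Lambda\cap F'_{n-p+1})\ge 1$; that rank deficiency makes the rows $\widehat{v}_p,\ldots,\widehat{v}_1$ of a truncated matrix linearly dependent, contradicting the non-vanishing (generic in the $+$-entries, via an honest anti-diagonal argument on a submatrix with free entries) of $\det(\widetilde{A}_p)$. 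This use of isotropy is the essential missing idea; without it, or without an explicit specialization of the $+$-entries witnessing non-vanishing of your Cramer numerator (which you do not supply), the inner-box case is not proved. A minor additional slip: for rows $p\le j$ of the inner box the entry $b^p_{k-p+1}$ is a $0$-entry and the left-most non-$0$-entry is $b^p_{k-p+2}$, still a $*$-entry, so those rows also require the argument (the paper treats them as the separate case $p\le j$, using $\dim(\Lambda\cap F'_{n-p+2})\ge 2$).
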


We will use a similar strategy as in the proof for Proposition \ref{prop:solve_for_*}. By the last statement of Proposition \ref{prop:solve_for_*_saturated}, it is essentially sufficient to consider the inner box of $A$, so we consider the case  $(I,I')=(\emptyset,\emptyset)$ or $(\emptyset,\{j\})$, where $j\in[n-2]$. Then, if the left-most non-0-entry in the $p$-th row of $A$ equals zero, we will see that a particular square sub-matrix of $A$, supported in the columns $0,\overline{[n]}$, must have zero determinant. The non-empty Zariski open subset $W_{I,I'}$ will be the locus where the determinants of these submatrices of $A$ are not $0$.

\begin{proposition}\label{prop:*_nonzero_box}
Proposition \ref{prop:*_nonzero} holds in the cases $(I,I')=(\emptyset,\emptyset)$ and $(I,I')=(\emptyset,\{j\})$, as in Proposition \ref{prop:solve_for_*_box}.
\end{proposition}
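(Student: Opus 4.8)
For each row $p$ I will reduce the assertion ``the leftmost non-$0$-entry of row $p$ of $A$ is nonzero'' to the non-vanishing of the determinant of a submatrix of $A$ supported in the columns $\{0\}\cup\overline{[n]}$ — such columns contain no $*$-entries, so this determinant is a polynomial in the $+$-entries — and I will take $W_{I,I'}$ to be the locus in $V_{I,I'}$ on which all these determinants are nonzero. The first step is to reduce to the rows whose leftmost non-$0$-entry is a $*$-entry. By the construction of $M_{I,I'}$ in the two cases at hand, the leftmost non-$0$-entry of row $p$ occupies a fixed column $m=m_p$: it is the $*$-entry $a^p_{n+1-p}$ when $(I,I')=(\emptyset,\emptyset)$, and when $(I,I')=(\emptyset,\{j\})$ with $p>j$; it is the $*$-entry $a^p_{n+2-p}$ when $(I,I')=(\emptyset,\{j\})$ with $2\le p\le j$; and it is a $+$-entry when $(I,I')=(\emptyset,\{j\})$ with $p=1$. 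Since every $+$-entry is already nonzero on $V_{I,I'}\subset(\bC-\{0\})^{d(I,I')}$ (Proposition \ref{prop:solve_for_*_box}), it remains to show, for each of the other $p$, that the function $a\mapsto a^p_{m_p}(\pr^{-1}(a))$ does not vanish identically on $V_{I,I'}$.

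\textbf{From a vanishing $*$-entry to a degenerate isotropic subspace.}
Fix such a $p$ and put $m=m_p$. Suppose $A\in\pr^{-1}(V_{I,I'})$ satisfies $a^p_m=0$. Inspecting the leading-$0$ pattern of $M_{I,I'}$: rows $1,\dots,p-1$ each carry at least $m$ leading $0$-entries and row $p$ carries exactly $m-1$, so together with $a^p_m=0$ all of rows $1,\dots,p$ of $A$ vanish in columns $1,\dots,m$. Thus $\Lambda':=\langle v_1,\dots,v_p\rangle\subseteq H_{[m]}$, where $v_1,\dots,v_p$ are the top $p$ rows of $A$; these are linearly independent (Lemma \ref{lem:AII'}), so $\dim\Lambda'=p$. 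The restriction of the quadratic form to $H_{[m]}$ has radical $R:=\langle e_{\overline{1}},\dots,e_{\overline{m}}\rangle$, of dimension $m$, and $H_{[m]}/R$ is non-degenerate of the odd dimension $2(n-m)+1$, so its maximal isotropic subspaces have dimension $n-m$. Since $\Lambda'$ is isotropic, its image in $H_{[m]}/R$ is an isotropic subspace, of dimension at most $n-m$, whence
\begin{equation*}
\dim(\Lambda'\cap R)\ \ge\ p-(n-m),
\end{equation*}
which equals $1$ when $m=n+1-p$ and $2$ when $m=n+2-p$.

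\textbf{Translation to a determinant, and conclusion.}
The inequality $\dim(\Lambda'\cap R)\ge 1$ says that the submatrix $B$ of $A$ formed by rows $1,\dots,p$ and all columns \emph{outside} $\overline{[m]}$ — a $p\times(2n+1-m)$ matrix — has rank $<p$: a nonzero vector of $\Lambda'\cap R$ is a linear combination $\sum c_iv_i$ with $(c_1,\dots,c_p)$ a nonzero vector killing $B$ on the left. Hence every $p\times p$ minor of $B$ vanishes, and when $m=n+2-p$ so does every $(p-1)\times(p-1)$ minor. The columns $\{0\}\cup\overline{[m+1,n]}$ lie outside $\overline{[m]}$ and number exactly $p$ if $m=n+1-p$ and exactly $p-1$ if $m=n+2-p$; let $\delta_p$ be the determinant of the submatrix of $A$ on these columns, on all of rows $1,\dots,p$ in the first case and on a suitable $p-1$ of them (omitting row $1$) in the second. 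Then $\delta_p$ involves only $0$-, $1$- and $+$-entries, hence is a polynomial in the coordinates of $\bC^{d(I,I')}$, and the previous step gives $a^p_{m}(\pr^{-1}(a))=0\Rightarrow\delta_p(a)=0$. One checks that $\delta_p$ is not the zero polynomial: as the $+$-entries are independent variables each occurring in one position of $M_{I,I'}$, it suffices to produce one nonzero monomial in the Leibniz expansion of $\delta_p$ unmatched by any other term, and such a term is obtained by following the staircase of $1$- and $+$-entries of the relevant submatrix (in each row, the entry just left of or just below its $1$-entry); in the degenerate cases ($p=1$, or $p=2$ with $m=n+2-p$) one simply has $\delta_p=a^p_0$. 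Setting
\begin{equation*}
W_{I,I'}\ :=\ V_{I,I'}\cap\bigcap_{p}\{a:\delta_p(a)\neq 0\}
\end{equation*}
over the relevant rows $p$ gives a finite intersection of dense open subsets of $\bC^{d(I,I')}$, hence a nonempty Zariski open subset of $V_{I,I'}$; and for $a\in W_{I,I'}$, $A=\pr^{-1}(a)$, no row of $A$ has its leftmost non-$0$-entry equal to $0$, since such an entry would be some $a^p_{m_p}$ and would force $\delta_p(a)=0$.

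\textbf{Expected obstacle.}
The idea — a vanishing leftmost $*$-entry pushes $\langle v_1,\dots,v_p\rangle$ into the radical $R$ of the form on $H_{[m]}$ — is short; the work is the bookkeeping: verifying the leading-$0$ counts that place rows $1,\dots,p$ inside $H_{[m]}$, confirming that $\{0\}\cup\overline{[m+1,n]}$ provides exactly the number of columns needed in each of the two cases (which is what forces the split into $m=n+1-p$ and $m=n+2-p$), and, most delicately, verifying case by case from the explicit staircase of $M_{I,I'}$ (and with the right choice of $p-1$ rows when $m=n+2-p$) that $\delta_p\not\equiv 0$.
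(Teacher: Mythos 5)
Your proof is correct and follows the same essential strategy as the paper's: a vanishing leftmost $*$-entry in row $p$ forces the top $p$ rows into a degenerate situation, which is translated into the vanishing of a determinant supported in the columns $\{0\}\cup\overline{[n]}$ — a polynomial in the $+$-entries alone — and the submatrices you single out (rows $[1,p]$, columns $\{0\}\cup\overline{[n-p+2,n]}$ when $m=n+1-p$; rows $[2,p]$, columns $\{0\}\cup\overline{[n-p+3,n]}$ when $m=n+2-p$) are exactly the paper's $\widetilde{A}_p$. The one genuine difference is that you avoid the paper's downward strong induction on $p$: the paper first deduces $\dim(\Lambda\cap F'_m)\ge 1$ from $\dim(\Lambda\cap (F'_m)^\perp)\ge p$ and then needs the inductive hypothesis to isolate the linear dependence among the top $p$ rows of $\widehat{A}_p$ (the bottom $n-p$ rows being independent by induction), whereas you work directly with $\Lambda'=\langle v_1,\dots,v_p\rangle$ as a $p$-dimensional isotropic subspace of $H_{[m]}$ and read off $\dim(\Lambda'\cap R)\ge p-(n-m)$ from the radical $R$ of the restricted form; since $\Lambda'$ is by definition spanned by the top $p$ rows, the left-kernel of $B$ is immediate. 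This is a clean streamlining (note that $(F'_m)^\perp=H_{[m]}$ and $F'_m=R$, so the geometry is identical). The only soft spot is your verification that $\delta_p\not\equiv 0$: the "staircase" recipe as stated (the entry just left of or below each row's $1$-entry) does not describe a transversal of $\widetilde{A}_p$, since for most rows $1\le i\le p$ the $1$-entry sits in column $\bar{\imath}$, which is not among the columns $\{0\}\cup\overline{[n-p+2,n]}$ at all. The correct and easy verification is the paper's: the anti-diagonal of $\widetilde{A}_p$ consists entirely of non-$0$-entries (its upper-right entry is $a^1_{\overline{n-p+2}}$, a $+$-entry when $p\ge j+1$, and non-$0$-entries propagate to the southwest), and since the $+$-entries are independent coordinates, the anti-diagonal monomial in the Leibniz expansion cannot be cancelled.
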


\begin{proof}
As before, let $j = j(I,I') = 0$ if $I' = \emptyset$. Otherwise, let $j$ be the unique element of $I'$. For any $p$, denote the $p$-th row of $A$ by $v_p$. When $j \ge 1$, the leftmost entry of the $v_1$ is a $+$-entry, so there is nothing to check. Otherwise, we prove by strong induction on $p$, starting from $p=n$ and decreasing the value of $p$, that the left-most $*$-entry of $v_p$ is non-zero.

Suppose first that $p\ge j+1$. By construction, the leftmost $\mu'_p=n-p$ entries of the $v_p$ are $0$-entries, so the leftmost non-$0$-entry, which is a $*$-entry, appears in column $n-p+1$. Suppose, for the sake of contradiction, that this entry is equal to $0$. This means that
    \begin{equation*}
        \dim(\Lambda \cap (F'_{n-p+1})^\perp) \ge p,
    \end{equation*}
    because now the first $p$ rows have $0$ in column $1, \ldots, n-p+1$. We claim that
    \begin{equation*}
        \dim(\Lambda \cap F'_{n-p+1}) \ge 1.
    \end{equation*}
Indeed, if $\Lambda \cap F'_{n-p+1}=0$, then $\Lambda \cap (F'_{n-p+1})^\perp$ may be viewed as an isotropic subspace of $(F'_{n-p+1})^\perp/F'_{n-p+1}$, which inherits a non-degenerate symmetric form from $\bC^{2n+1}$. However, we have $\dim((F'_{n-p+1})^\perp/F'_{n-p+1})=2p-1$, so we cannot have $\dim(\Lambda \cap (F'_{n-p+1})^\perp) \ge p$.

Let $\widehat{A}_p$ be the $n \times (n+p)$ submatrix of $A$ obtained by removing columns $\overline{n-p+1}, \ldots, \overline{1}$ from $A$. Because $\dim(\Lambda \cap F'_{n-p+1}) \ge 1$, the rank of $\widehat{A}_p$ is at most $n-1$. 

Write $\widehat{v}_1,\ldots,\widehat{v}_n$ for the rows of $\widehat{A}_p$. By the inductive hypothesis, the left-most non-$0$-entries of $\widehat{v}_n, \ldots, \widehat{v}_{p+1}$ are not zero, and appear in different columns, so $\widehat{v}_n, \ldots,\widehat{v}_{p+1}$ are linearly independent. On the other hand, by construction, the left-most $\mu'_{p}=n-p$ entries of row $p, \ldots, 1$ are all zero, so 
\begin{equation*}
    \langle \widehat{v}_n, \ldots, \widehat{v}_{p+1} \rangle \cap \langle \widehat{v}_{p}, \ldots, \widehat{v}_1 \rangle = 0.
\end{equation*}
Then, 
   \begin{align*}
       \dim(\langle \widehat{v}_n, \ldots,\widehat{v}_1 \rangle) &= \dim(\langle \widehat{v}_n, \ldots,\widehat{v}_{p+1} \rangle)+\dim(\langle \widehat{v}_{p}, \ldots,\widehat{v}_{1} \rangle)\\
       &= (n-p) + \dim(\langle \widehat{v}_{p}, \ldots,\widehat{v}_{1} \rangle).
   \end{align*}
    Thus, the vectors $\widehat{v}_{p}, \ldots,\widehat{v}_{1}$ must be linearly dependent. 
    
    On the other hand, let $\widetilde{A}_p$ be the further submatrix of $\widehat{A}_p$ with row set $[1,p]$ and column set $[0]\cup \overline{[n-p+2,n]}$. (Note that $\widehat{A}_p$ is obtained from the matrix $A_p$ from the proof of Proposition \ref{prop:solve_for_*_box} by shifting entries one unit to the left.) Then, we claim that the determinant of $\widehat{A}_p$ is not identically zero as a function of the $+$-entries of $A$, hence non-zero for a general choice of $+$-entries. Indeed, the anti-diagonal of $\widehat{A}_p$ has no $0$-entries, because the upper right entry $a^1_{\widehat{n-p+2}}$ is a non-$0$-entry when $p\ge j+1$. This contradicts the linear dependence of $\widehat{v}_{p}, \ldots,\widehat{v}_{1}$, and hence the assumption that the left-most $*$-entry of $v_p$ is zero.

Suppose next that $p\le j$. The argument is similar to before: now, if the left-most $*$-entry of $v_p$ is zero, then 
    \begin{equation*}
        \dim(\Lambda \cap (F'_{n-p+2})^\perp) \ge p,
    \end{equation*}
    which would imply that 
    \begin{equation*}
        \dim(\Lambda \cap F'_{n-p+2}) \ge 2.
    \end{equation*}
Note that $v_1\in \Lambda \cap F'_{n-p+2}$. On the other hand, arguing as before (and using the inductive hypothesis), this would imply that the $(p-1)\times (p-1)$ matrix $\widehat{A}_p$ with row set $[2,p]$ and column set $[0]\cup \overline{[n-p+3,n]}$ would have dependent rows. (Once more, $\widehat{A}_p$ is obtained from the matrix $A_p$ from the proof of Proposition \ref{prop:solve_for_*_box} by shifting entries one unit to the left.) This is again an impossibility for a general choice of $+$-entries of $A$, because the anti-diagonal entries of $\widehat{A}_p$ are not 0-entries.   
\end{proof}

Note that the last statement of Proposition \ref{prop:solve_for_*_saturated} gives Proposition \ref{prop:*_nonzero} when $(I,I')$ is saturated, by taking $W_{I,I'}=V_{I,I'}$. We now combine this with Proposition \ref{prop:*_nonzero_box} to prove Proposition \ref{prop:*_nonzero} in general.

\begin{proof}[Proof of Proposition \ref{prop:*_nonzero}]
    For a general choice of $+$-entries in $A$, the argument of Proposition \ref{prop:*_nonzero_box} shows that the left-most entry of every row of $A^{\inbox}$ is non-zero. In particular, the entry in the southwest corner of $A^{\inbox}$ is non-zero. (Note that this entry is \emph{not} in general the leftmost entry in its row of $A$.) Solving for the remaining $*$-entries along $P_{\SW}$, as in Proposition \ref{prop:solve_for_*_saturated} (and the proof of Proposition \ref{prop:solve_for_*}), shows further that \emph{all} remaining $*$-entries are non-zero. The Proposition follows.
\end{proof}

For each $p=1,\ldots,n$, let $\alpha'_p,\alpha_p$ be the integers such that the $p$-th row of $M_{I,I'}$ has its leftmost $\alpha'_p$ entries equal to zero, and its rightmost $\alpha_p$ entries equal to zero. Proposition \ref{prop:*_nonzero} implies that, for any $A\in\pr^{-1}(W_{I,I'})$, the $(\alpha'_p+1)$-st entry from the left of the $p$-th row is non-zero, as is the the $(\alpha_p+1)$-st entry from the right. In particular, if $\Lambda$ is the row span of $A$, we have
\begin{align*}
    \dim(\Lambda\cap F_{n+1-\lambda_h})&=h,\text{ for }h=1,\ldots,s,\\
    \dim(\Lambda\cap F^\perp_{\mu_h})&=h,\text{ for }h=s+1,\ldots,n.
\end{align*}
and 
\begin{align*}
    \dim(\Lambda\cap F'_{n+1-\lambda'_h})&=h,\text{ for }h=1,\ldots,s',\\
    \dim(\Lambda\cap (F'_{\mu'_h})^\perp)&=h,\text{ for }h=s'+1,\ldots,n,
\end{align*}
by construction of $M_{I,I'}$. Therefore, identifying $W_{I,I'}$ with its pre-image in $\cM_{I,I'}$, we obtain a map
\begin{equation*}
\ssp: W_{I,I'}\to \Sigma^{F,\circ}_I \cap \Sigma^{F',\circ}_{I'} \subset \Sigma_{I.I'}
\end{equation*}
given by taking row span.

\begin{corollary}\label{cor:matrix_injective}
The row span map $\ssp: W_{I,I'}\to \Sigma_{I.I'}$ is injective.
\end{corollary}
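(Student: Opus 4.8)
The plan is to show that a matrix $A\in W_{I,I'}$ (identified with its preimage in $\cM_{I,I'}$) is uniquely determined by its row span $\Lambda=\ssp(A)$, which gives injectivity immediately. Index the columns of $M_{I,I'}$ by positions $1,\dots,2n+1$, position $m$ being the $m$-th column in the order $1,\dots,n,0,\bar n,\dots,\bar 1$. For each $p$ let $\alpha'_p,\alpha_p$ be as in the paragraph following the proof of Proposition \ref{prop:*_nonzero}, so that row $p$ of $M_{I,I'}$ is $0$ in positions $1,\dots,\alpha'_p$ and $2n+2-\alpha_p,\dots,2n+1$ and carries a symbol $*,+,1$ in every position of the band $G_p:=[\alpha'_p+1,\ 2n+1-\alpha_p]$, with the $1$-entry sitting at the right end $2n+1-\alpha_p$. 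A preliminary step, which I would record as a short lemma read off from steps (1)--(4) of the construction of $M_{I,I'}$ and the strict monotonicity of $\lambda,\lambda',\mu,\mu'$, is that $\alpha'_1>\cdots>\alpha'_n=0$ and $0=\alpha_1<\cdots<\alpha_n$. Write $v_p$ for the $p$-th row of $A$ and $E_{G_p}\subset\bC^{2n+1}$ for the coordinate subspace spanned by the standard basis vectors in the positions of $G_p$.

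The heart of the argument is the claim that $\Lambda\cap E_{G_p}=\langle v_p\rangle$ for every $p$. Since $v_p\in\Lambda\cap E_{G_p}$ is nonzero, it suffices to check that any $w=\sum_q c_q v_q$ lying in $E_{G_p}$ has $c_q=0$ for $q\neq p$; such a $w$ vanishes in every position outside $G_p$. I would first clear the rows $q>p$, processing $q=n,n-1,\dots,p+1$ in order: the leftmost nonzero entry of row $q$ sits in position $m_q=\alpha'_q+1$, which by the monotonicity of $\alpha'$ lies strictly to the left of $G_p$, so $w$ vanishes there; rows $q'<q$ are $0$ in position $m_q$, and rows $q'>q$ contribute nothing since $c_{q'}=0$ from the earlier steps, leaving $c_q\,a^q_{m_q}=0$, hence $c_q=0$ because $a^q_{m_q}\neq0$ by Proposition \ref{prop:*_nonzero}. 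A mirror-image elimination of the rows $q<p$, processing $q=1,2,\dots,p-1$ and using the positions $2n+1-\alpha_q$ of their $1$-entries (which lie strictly to the right of $G_p$, and where the relevant entry of $A$ is the nonzero constant $1$), gives $c_q=0$ for $q<p$. Thus $w=c_pv_p$, proving the claim.

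Granting the claim, the reconstruction is immediate: the position $2n+1-\alpha_p$ lies in $G_p$ and the coordinate of $v_p$ there is its $1$-entry, so $v_p$ is the unique vector on the line $\Lambda\cap E_{G_p}$ whose $(2n+1-\alpha_p)$-th coordinate equals $1$. Since $G_p$ and the normalizing position depend only on the pair $(I,I')$ and not on $A$, this shows that $\Lambda$ recovers every row of $A$, hence $A$ itself, so $\ssp$ is injective. I do not anticipate a serious obstacle: the only substantive input is Proposition \ref{prop:*_nonzero}, which furnishes exactly the nonvanishing of the leftmost entries that keeps the left-elimination from stalling — and this is precisely why the statement is asserted over $W_{I,I'}$ and not merely $V_{I,I'}$; everything else is bookkeeping with the shape of $M_{I,I'}$.
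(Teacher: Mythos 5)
Your proof is correct and is essentially the paper's argument: your coordinate band $E_{G_p}$ is exactly the intersection $F'_{(2n+1)-\alpha'_p}\cap F_{(2n+1)-\alpha_p}$ of flag elements used in the paper, and both proofs recover the $p$-th row as the normalized generator of the line $\Lambda\cap E_{G_p}$. The only difference is cosmetic — you verify the one-dimensionality by explicit two-sided coefficient elimination, whereas the paper deduces it by identifying $\Lambda\cap F'_{(2n+1)-\alpha'_p}$ and $\Lambda\cap F_{(2n+1)-\alpha_p}$ with the spans of the top $p$ and bottom $n-p+1$ rows (both resting on Proposition \ref{prop:*_nonzero} and the positions of the $1$-entries).
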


\begin{proof}
    For any $\Lambda\in\Sigma_{I,I'}$ in the image of $\ssp$, we have
    \begin{equation*}
        \dim(\Lambda\cap F'_{(2n+1)-\alpha'_p}\cap F_{(2n+1)-\alpha_p})=1.
    \end{equation*}
    Here, we write $F_m$ for the $m$-dimensional flag element of $F$, that is,
    \begin{equation*}
        F_m=
        \begin{cases}
            F_m&\text{ if }m\le n\\
            F_{2n+1-m}^\perp&\text{ if }m\ge n+1
        \end{cases}
    \end{equation*}
    and similarly for $F'_m$. Indeed, if $\Lambda=\ssp(A)$, then $\Lambda\cap F'_{(2n+1)-\alpha'_p}$ is the row span of the first $p$ rows of $A$, by Proposition \ref{prop:*_nonzero}. Similarly, $\Lambda\cap F_{(2n+1)-\alpha_p}$ is the row span of the last $n-p+1$ rows.

    It follows that any matrix mapping to $\Lambda$ under $\ssp$ must have, as its $p$-th row, the unique vector lying in $\Lambda\cap F'_{(2n+1)-\alpha'_p}\cap F_{(2n+1)-\alpha_p}$ whose rightmost non-zero coordinate equals 1. There is a unique such matrix, so the Corollary is proven.
\end{proof}

In particular, we obtain a rational map
\begin{equation*}
    \phi_{I,I'}:\bC^{d(I,I')}\dashrightarrow \cM_{I,I'}\to \Sigma_{I,I'},
\end{equation*}
given by $\ssp\circ\pr^{-1}$.

\begin{corollary}\label{cor:chart}
$\phi_{I,I'}:\bC^{d(I,I')}\dashrightarrow \Sigma_{I,I'}$ is birational.
\end{corollary}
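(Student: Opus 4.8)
The plan is to assemble the birationality of $\phi_{I,I'}=\ssp\circ\pr^{-1}$ from the three ingredients already set up: Proposition \ref{prop:solve_for_*} ($\pr$ is birational, hence an isomorphism over the dense open $V_{I,I'}$), Corollary \ref{cor:matrix_injective} ($\ssp$ is injective on $W_{I,I'}\subset V_{I,I'}$), and a dimension count identifying $d(I,I')$ with $\dim\Sigma_{I,I'}$. Concretely, over the dense open $W_{I,I'}\subset\bC^{d(I,I')}$ the map $\phi_{I,I'}$ is a morphism, and by Corollary \ref{cor:matrix_injective} it is injective there (since $\pr^{-1}$ is an isomorphism onto its image and $\ssp$ is injective). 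An injective morphism from an irreducible variety of dimension $d(I,I')$ onto its image in $\Sigma_{I,I'}$ has image of dimension $d(I,I')$; if we know $\dim\Sigma_{I,I'}=d(I,I')$, then generic injectivity of a morphism between irreducible varieties of the same dimension forces it to be dominant and birational (over $\bC$, an injective dominant morphism of irreducible varieties of the same dimension is birational, e.g. by generic smoothness or by Zariski's main theorem). Thus the crux is the dimension equality, which is exactly the content invoked as Lemma \ref{lem:dII'}: $d(I,I')$, the number of $+$-entries of $M_{I,I'}$, equals the codimension-complement $\dim\OG(n,2n+1)-w(I)-w(I')=\tfrac{n(n+1)}{2}-w(I)-w(I')$.

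First I would verify $\dim\Sigma_{I,I'}=d(I,I')$. Since $\Sigma_{I,I'}$ is a Richardson variety of codimension $w(I)+w(I')$ in $\OG(n,2n+1)$ (Definition \ref{def:richardson} and the discussion following it), it suffices to count the $+$-entries of $M_{I,I'}$ and check the bookkeeping $d(I,I')=\tfrac{n(n+1)}{2}-w(I)-w(I')$. This is a direct count from the construction in \S\ref{sec:matrices}: each row $p$ has a prescribed number of leading $0$-entries ($\alpha'_p$) and trailing $0$-entries ($\alpha_p$), exactly one $1$-entry, and the rest split into $*$-entries (in columns of $[n]$) and $+$-entries (in columns of $\overline{[n]}\cup\{0\}$). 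Summing $2n+1-\alpha'_p-\alpha_p-1$ over all rows and subtracting the total number of $*$-entries $b(I,I')$ gives $d(I,I')$; the values $\alpha'_p,\alpha_p$ are read off from the partitions $\lambda,\mu,\lambda',\mu'$, and the identity reduces to the standard fact that $\sum(n+1-\lambda_h)+\sum(1+\mu_h)$-type sums telescope against $w(I)=\sum\lambda_h$. I expect this to be the main obstacle only in the sense of being the one genuinely computational step; conceptually it is routine, but it is where all the combinatorial definitions must be pinned down precisely, including the degenerate/saturated conventions of Definitions \ref{def:jk}--\ref{def:T_n}.

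With the dimension count in hand, the rest is formal. I would argue: $\pr$ restricts to an isomorphism $\pr^{-1}(W_{I,I'})\xrightarrow{\sim}W_{I,I'}$ by Proposition \ref{prop:solve_for_*} (shrinking $V_{I,I'}$ to $W_{I,I'}$ if needed); composing the inverse with the injective morphism $\ssp\colon W_{I,I'}\to\Sigma_{I,I'}$ of Corollary \ref{cor:matrix_injective} gives an injective morphism $\phi_{I,I'}\colon W_{I,I'}\to\Sigma_{I,I'}$ between irreducible varieties of the same dimension $d(I,I')=\dim\Sigma_{I,I'}$. An injective dominant morphism of irreducible complex varieties of the same dimension is birational onto its image, and a morphism with finite (here, injective) generic fibers between irreducible varieties of equal dimension is automatically dominant; hence $\phi_{I,I'}$ is birational. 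Finally, I would remark that the image indeed meets the dense open cell $\Sigma_I^{F,\circ}\cap\Sigma_{I'}^{F',\circ}$, as recorded just before Corollary \ref{cor:matrix_injective}, so that the birational chart is adapted to the open Richardson cell — this is what is needed for the degeneration argument in \S\ref{sec:degen}.
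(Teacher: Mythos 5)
Your proposal is correct and follows essentially the same route as the paper: restrict to the open set $W_{I,I'}$ where $\pr$ is an isomorphism and $\ssp$ is injective (Proposition \ref{prop:solve_for_*} and Corollary \ref{cor:matrix_injective}), then invoke the dimension count $d(I,I')=\dim\Sigma_{I,I'}$ of Lemma \ref{lem:dII'} to conclude that an injective morphism between irreducible varieties of equal dimension is dominant and birational. Your alternative bookkeeping for the count of $+$-entries (summing row lengths and subtracting $b(I,I')$) differs cosmetically from the paper's count along $P_{\NE}$ and the inner box, but this is an equivalent routine computation.
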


\begin{proof}
    Upon restriction to $W_{I,I'}$, the map $\phi_{I,I'}$ is defined everywhere, and injective by Proposition \ref{prop:solve_for_*} and Corollary \ref{cor:matrix_injective}. By Lemma \ref{lem:dII'} below, $\phi_{I,I'}$ is a rational map between irreducible varieties of the same dimension. The conclusion follows.
\end{proof}

\begin{lemma}\label{lem:dII'}
We have $d(I,I')=\dim(\Sigma_{I,I'})$.
\end{lemma}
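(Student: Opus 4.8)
The plan is to compute both sides explicitly and compare. By definition of the Richardson variety, $\dim(\Sigma_{I,I'}) = \dim\OG(n,2n+1) - w(I) - w(I') = \tfrac{n(n+1)}{2} - \sum_{\lambda\in I}\lambda - \sum_{\lambda'\in I'}\lambda'$, so the entire content is to show that $d(I,I')$, the number of $+$-entries of $M_{I,I'}$, equals this quantity. First I would set up a clean bookkeeping of the entries of $M_{I,I'}$: each of the $n$ rows has its leftmost $\alpha'_p$ entries and rightmost $\alpha_p$ entries equal to $0$ (using the notation $\alpha'_p,\alpha_p$ introduced just before Corollary \ref{cor:matrix_injective}), exactly one $1$-entry, then some $\ast$-entries and some $+$-entries among the remaining $2n+1-\alpha_p-\alpha'_p-1$ positions. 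Thus $d(I,I') = \sum_{p=1}^n (2n+1-\alpha_p-\alpha'_p-1) - 1) - b(I,I') = \sum_{p=1}^n(2n - \alpha_p - \alpha'_p) - n - b(I,I')$, where $b(I,I')$ counts the $\ast$-entries.

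The key steps, in order: (1) Compute $\sum_p \alpha'_p$. By construction steps (1)–(2), the rows with $p\le s'$ contribute $n+\lambda'_p$ and the rows with $p>s'$ contribute $\mu'_p$; since $\{\lambda'_1,\dots,\lambda'_{s'}\}\cup\{\mu'_{s'+1},\dots,\mu'_n\}$ is a disjoint decomposition related to $\{0,1,\dots,n-1\}$ — precisely, the $\mu'$ are obtained by deleting $n-\lambda'_{s'},\dots,n-\lambda'_1$ from $n-1,\dots,0$ — one gets $\sum_{p\le s'}\lambda'_p + \sum_{p>s'}\mu'_p = \binom{n}{2}$, hence $\sum_p\alpha'_p = ns' + \binom{n}{2}$. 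Wait: more carefully, $\sum_{p\le s'}(n+\lambda'_p) + \sum_{p>s'}\mu'_p = ns' + \big(\sum\lambda'_p + \sum\mu'_p\big)$, and I would verify $\sum\lambda'_p+\sum\mu'_p = \binom{n}{2}$ directly from the deletion description (the two sets partition $\{0,\dots,n-1\}$ after subtracting the $\lambda'$ from the total $\binom{n}{2}$... I will check the exact identity). (2) Symmetrically compute $\sum_p\alpha_p = ns + \binom{n}{2}$ from steps (3)–(4). (3) Compute $b(I,I')$, the number of $\ast$-entries; by step (7) these are exactly the non-$0$, non-$1$ entries in columns indexed by $[n]$. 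Here I would use the lattice-path description from \S\ref{sec:M_II'_structure}: the $\ast$-entries lie along $P_{\SW}$ together with the left part of the inner box, and counting them row by row (in row $p$ the $\ast$-entries fill the gap between the leftmost block of $0$'s in the $[n]$-columns and the $1$ or $+$ block) should give a formula like $b(I,I') = \sum_{\lambda\in I}\lambda - (\text{correction})$ — I expect it to come out so that everything telescopes.

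Assembling, $d(I,I') = \big(2n^2 - (ns+\binom{n}{2}) - (ns'+\binom{n}{2})\big) - n - b(I,I') = 2n^2 - 2\binom{n}{2} - n(s+s') - n - b(I,I') = n - n(s+s') - b(I,I')$, and the claim reduces to checking $b(I,I') = n - n(s+s') - \tfrac{n(n+1)}{2} + \sum_{I}\lambda + \sum_{I'}\lambda'$; equivalently, after the dust settles, $b(I,I') = \sum_{\lambda\in I}\lambda + \sum_{\lambda'\in I'}\lambda' - (\text{something purely in } n,s,s')$, which I would verify by the same deletion/partition identities used for the $\alpha$'s applied now to the $[n]$-columns. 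The main obstacle I anticipate is the clean count of $b(I,I')$: the placement of $\ast$ versus $+$ depends on which side of column $0$ an entry lies (steps (6)–(7)) and interacts with the inner-box/$P_{\SW}$/$P_{\NE}$ decomposition and with the saturated-versus-nonsaturated dichotomy, so one must handle the degenerate cases ($j=0$, $k=n$, saturated) carefully — but in each case the count is a finite sum over rows that should telescope against the $\mu,\mu'$ deletion identities. Everything else is routine arithmetic with the partitions $\lambda,\mu,\lambda',\mu'$.
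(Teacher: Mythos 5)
Your overall strategy (compute $d(I,I')$ by explicit entry-counting in $M_{I,I'}$ and compare with $\dim\OG(n,2n+1)-w(I)-w(I')$) is the right one, but as written the proof has a genuine gap and the arithmetic scaffolding around it is incorrect. The gap is exactly the one you flag yourself: the count of the $*$-entries $b(I,I')$ is never carried out, and your route forces you to compute it, since you express $d(I,I')$ as (total entries) $-$ (zeros) $-$ (ones) $-$ $b(I,I')$. The paper avoids $b(I,I')$ entirely by counting the $+$-entries directly via the structural decomposition of \S\ref{sec:M_II'_structure}: there are no $+$-entries along $P_{\SW}$, exactly $n-k$ along $P_{\NE}$ (one per column $\overline{2},\ldots,\overline{n-k+1}$), and exactly $(1+2+\cdots+k)-j$ in the inner box; the identity $[1+\cdots+(k-1)]+n-j=\tfrac{n(n+1)}{2}-w(I)-w(I')$ then follows immediately because $I\cup I'=\{j\}\cup[k,n-1]$ for a non-saturated allowed pair (with the saturated case handled separately, giving $n$ $+$-entries). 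If you want to keep your subtraction approach, you would still need an analogous lattice-path count for the $*$-entries, which is no easier than counting the $+$-entries directly.

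Separately, the intermediate identities you state are wrong. Since $\mu'_{s'+1},\ldots,\mu'_n$ are obtained by deleting $n-\lambda'_{s'},\ldots,n-\lambda'_1$ from $\{n-1,\ldots,0\}$, one has $\sum_p\mu'_p=\binom{n}{2}-ns'+\sum_p\lambda'_p$, hence $\sum_p\lambda'_p+\sum_p\mu'_p=\binom{n}{2}-ns'+2w(I')$, not $\binom{n}{2}$, and $\sum_p\alpha'_p=\binom{n}{2}+2w(I')$, not $ns'+\binom{n}{2}$ (check: for $n=8$, $I'=\{1,3,5\}$ as in Example \ref{eg:MII'}, the row-by-row left-zero counts are $13+11+9+6+4+2+1+0=46=\binom{8}{2}+2\cdot 9$). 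There is also an off-by-$n$ in your first display (the $n$ ones are subtracted twice). Consequently your ``assembled'' formula $d(I,I')=n-n(s+s')-b(I,I')$ is false, and the identity you propose to verify for $b(I,I')$ is not the correct target. These are fixable, but together with the missing count of $b(I,I')$ they mean the proof does not yet close.
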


\begin{proof}
If $(I,I')$ is not saturated, then the number of $+$-entries of $M_{I,I'}$ along the lattice path $P_{\NE}$ is exactly $n-k$, one for each of the columns $\overline{n-k+1},\ldots,\bar2$. The number of $+$-entries in the inner box $M_{I,I'}^{\inbox}$ is
\begin{equation*}
    1+2+\cdots+k-j.
\end{equation*}
The entry in the upper right corner of $M_{I,I'}^{\inbox}$ is a 1-entry, so we have not counted it in either case. There are no $+$-entries along $P_{\SW}$. In total, we have 
\begin{align*}
    d(I,I')&=[1+2+\cdots+(k-1)]+n-j\\
    &=[1+\cdots+n]-[k+\cdots+(n-1)]-j\\
    &=\dim(\OG(n,2n+1))-w(I)-w(I')\\
    &=\dim(\Sigma_{I,I'}).
\end{align*}

If $(I,I')$ is saturated, then the number of $+$-entries of $M_{I,I'}$ is $n$, one for each of the columns $0,\barn,\ldots,\bar2$, which again equals the dimension of $\Sigma_{I,I'}$.
\end{proof}

\section{Degenerations of torus orbits}\label{sec:degen}

\subsection{Overview and example}\label{sec:degen_overview}

We now begin to discuss our degeneration of a general torus orbit closure $Z\subset\OG(n,2n+1)$. It is constructed iteratively, indexed by the tree $\cT_n$ (Definition \ref{def:T_n}).

Let $(I,I')$ be a non-saturated allowed pair, and write
\begin{align*}
    \ell(I,I')&=(I,I'_+),\\
    r(I,I')&=(I_+,I').
\end{align*}
as in \S\ref{sec:allowed_pairs}. We write $j=j(I,I')$ and $k=k(I,I')$ throughout.

Let $\Lambda\in\Sigma_{I,I'}$ be a general point. We will specify only later the precise sense in which $\Lambda$ is required to be general in Definition \ref{def:nicest}. Adopting the notation of \S\ref{sec:prelim_degen}, we will construct a degeneration
\begin{equation*}
    \pi_{I,I'}:\cZ\subset\OG(n,2n+1)\times U\to U
\end{equation*}
whose general fiber is $Z_{\Lambda}$, and whose special fiber consists of the torus orbits $Z_{\Lambda_\ell}$ and $Z_{\Lambda_r}$, where $\Lambda_\ell\in \Sigma_{\ell(I,I')}$ and $\Lambda_r\in \Sigma_{r(I,I')}$ are general points. Then, we will iterate the degeneration downward through $\cT_n$, until reaching, for all saturated allowed pairs $(I,I')$, torus orbits $Z_{I,I'}$ which are in fact equal to the Richardson variety $\Sigma_{I,I'}$.

We first describe the degeneration $\pi_{I,I'}$ by way of example. 
As in Example \ref{eg:innerbox}, take $n = 6$, $I = \emptyset, I' = \{3\}$. We will see that the case where $I$ is empty and $|I'|\le 1$ is in some sense the main one, because our degeneration will only have an interesting effect in the inner box of a matrix $A\in\cM_{I,I'}$.

Consider a 1-parameter family of matrices
\begin{equation*}
    A(t)=
    \begin{bmatrix}
0 & 0 & 0 & 0 & 0 & 0 & 0 & 0 & 0 & a^1_{\bar4}\textcolor{red}{t} & a^1_{\bar3} & a^1_{\bar2} & 1 \\
0 & 0 & 0 & 0 & 0 & a^2_6 & a^2_0 & a^2_{\bar6} & a^2_{\bar5} & a^2_{\bar4} & a^2_{\bar3} & 1 & 0\\
0 & 0 & 0 & 0 & a^3_5 & a^3_6 & a^3_0 & a^3_{\bar6} & a^3_{\bar5} & a^3_{\bar4} & 1  & 0 & 0 \\
0 & 0 & a^4_3 & a^4_4 & a^4_5 & a^4_6 & a^4_0 & a^4_{\bar6} & a^4_{\bar5} & 1 & 0 & 0 & 0 \\
0 & a^5_2 & a^5_3 & a^5_4 & a^5_5 & a^5_6 & a^5_0 & a^5_{\bar6} & 1 & 0 & 0 & 0 & 0 \\
a^6_1 & a^6_2 & a^6_3 & a^6_4 & a^6_5 & a^6_6 & a^6_0 & 1 & 0 & 0 & 0 & 0 & 0 \\
    
    \end{bmatrix}
    \in\cM_{\emptyset,\{3\}}
\end{equation*}
where $t\in\bC-\{0\}$. Write also $\Lambda(t)$ for the row-span of $A(t)$. Then, the subscheme (degeneration) $\cZ\subset \OG(6,13)\times U$ will be defined to be the closure of $T\cdot \Lambda(t)$, where $t$ ranges over all (or the generic) $t\neq0$ in a suitable Zariski open set $U\subset\bC$. In particular, for any fixed $c\in\bC-\{0\}$, the restriction of $\cZ$ to the fiber over $t=c$ \emph{contains} the orbit closure $Z_{\Lambda(c)}$. We wish to consider the flat limit of $Z_{\Lambda(t)}$ as $t\to 0$, that is, the special fiber $\cZ_0$.

At $t=1$, if the $+$-entries
\begin{equation*}
 a^1_{\bar4},a^1_{\bar3},a^1_{\bar2},a^2_0,a^2_{\bar6},a^2_{\bar5},a^2_{\bar4},a^2_{\bar3},a^3_0,a^3_{\bar6},a^3_{\bar5}, a^3_{\bar4},a^4_0,a^4_{\bar6},a^4_{\bar5},a^5_0,a^5_{\bar6},a^6_0
\end{equation*}
are sufficiently general, then the $*$-entries 
\begin{equation*}
    a^2_6, a^3_5 , a^3_6,a^4_3 , a^4_4 , a^4_5 , a^4_6,a^5_2 , a^5_3 , a^5_4 , a^5_5,a^5_6,a^6_1,a^6_2 , a^6_3 , a^6_4 , a^6_5,a^6_6,
\end{equation*}
may be solved for uniquely. More precisely, the $*$-entries comprise the unique solution to the system of equations given by the orthogonality of each pair of rows of $A(1)$, by Proposition \ref{prop:solve_for_*_box}. 

More generally, viewing the $a^p_{\barq}$ as fixed and $t$ as varying, the $*$-entries are given by rational functions in $t$, obtained by solving the system of equations given by the orthogonality of each pair of rows of $A(t)$. Any pole of a rational function $a^p_{\barq}$ must be a root of the determinant of one of the submatrices $A_2,\ldots,A_6$, viewed as functions of $t$, from the proof of Proposition \ref{prop:solve_for_*_box}. (See also Example \ref{eg:innerbox} for a depiction of these submatrices in the case at hand.) Write also $A_2(0),\ldots,A_6(0)$ for the specializations of these submatrices to $t=0$.

At $t=0$, we claim that there is also a unique solution to this system of equations. That is, assuming the complex numbers $a^p_{\barq}$ were general, there is a unique matrix
\begin{equation*}
    A(0)=
    \begin{bmatrix}
0 & 0 & 0 & 0 & 0 & 0 & 0 & 0 & 0 & \textcolor{red}{0} & a^1_{\bar3} & a^1_{\bar2} & 1 \\
0 & 0 & 0 & 0 & 0 & \alpha^2_6 & a^2_0 & a^2_{\bar6} & a^2_{\bar5} & a^2_{\bar4} & a^2_{\bar3} & 1 & 0\\
0 & 0 & 0 & 0 & \alpha^3_5 & \alpha^3_6 & a^3_0 & a^3_{\bar6} & a^3_{\bar5} & a^3_{\bar4} & 1  & 0 & 0 \\
0 & 0 & \alpha^4_3 & \alpha^4_4 & \alpha^4_5 & \alpha^4_6 & a^4_0 & a^4_{\bar6} & a^4_{\bar5} & 1 & 0 & 0 & 0 \\
0 & \alpha^5_2 & \alpha^5_3 & \alpha^5_4 & \alpha^5_5 & \alpha^5_6 & a^5_0 & a^5_{\bar6} & 1 & 0 & 0 & 0 & 0 \\
\alpha^6_1 & \alpha^6_2 & \alpha^6_3 & \alpha^6_4 & \alpha^6_5 & \alpha^6_6 & a^6_0 & 1 & 0 & 0 & 0 & 0 & 0 \\
    
    \end{bmatrix}
    ,
\end{equation*}
whose rows are orthogonal. To see this, note first that by the orthogonality of rows 1 and 4 of $A(0)$, it must be the case that $\alpha^4_3=0$. We are now in the setting of Proposition \ref{prop:solve_for_*_box}, for $(I,I')=\ell(I,I')=(\emptyset,\{4\})$. Recalling the proof of Proposition \ref{prop:solve_for_*_box} in this case, we may solve uniquely for the remaining $*$-entries $\alpha^p_q$ as long as the determinants of the submatrices $A(0)_2,\ldots,A(0)_6$, do not vanish. (Here, we contrast the notation $A(0)_p$, denoting a submatrix coming from the proof of Proposition \ref{prop:solve_for_*_box} in the case $(I,I')=(\emptyset,\{4\})$, with the earlier notation $A_p(0)$, which denotes a sub-matrix coming from the proof of Proposition \ref{prop:solve_for_*_box} in the case $(I,I')=(\emptyset,\{3\})$.) The proof of Proposition \ref{prop:solve_for_*_box} shows that this non-vanishing indeed holds for a general choice of the $+$-entries $a^p_{\barq}$.

For $p=2,3,5,6$, we have $A(0)_p=A_p(0)$. For $p=4$, we have
\begin{equation*}
A_4(0)=\left[
\begin{array}{c|c} 
0 & a^1_{\bar3} \\ 
\hline A(0)_4 &0 \\

\end{array}
\right]
\end{equation*}
where the single $+$-entry $a^1_{\bar3}$ may be assumed to be non-zero. It follows that all five of the matrices $A(0)_2, A(0)_3, A(0)_4, A(0)_5, A(0)_6$ have non-vanishing determinant, and therefore that the functions $a^p_q$ are defined at $t=0$.

We conclude that
\begin{equation*}
    A_\ell:=\lim_{t\to 0}A(t)=
    \begin{bmatrix}
0 & 0 & 0 & 0 & 0 & 0 & 0 & 0 & 0 & \textcolor{red}{0} & a^1_{\bar3} & a^1_{\bar2} & 1 \\
0 & 0 & 0 & 0 & 0 & \alpha^2_6 & a^2_0 & a^2_{\bar6} & a^2_{\bar5} & a^2_{\bar4} & a^2_{\bar3} & 1 & 0\\
0 & 0 & 0 & 0 & \alpha^3_5 & \alpha^3_6 & a^3_0 & a^3_{\bar6} & a^3_{\bar5} & a^3_{\bar4} & 1  & 0 & 0 \\
0 & 0 & \textcolor{red}{0} & \alpha^4_4 & \alpha^4_5 & \alpha^4_6 & a^4_0 & a^4_{\bar6} & a^4_{\bar5} & 1 & 0 & 0 & 0 \\
0 & \alpha^5_2 & \alpha^5_3 & \alpha^5_4 & \alpha^5_5 & \alpha^5_6 & a^5_0 & a^5_{\bar6} & 1 & 0 & 0 & 0 & 0 \\
\alpha^6_1 & \alpha^6_2 & \alpha^6_3 & \alpha^6_4 & \alpha^6_5 & \alpha^6_6 & a^6_0 & 1 & 0 & 0 & 0 & 0 & 0 \\
    
    \end{bmatrix}
    \in\cM_{\emptyset,\{4\}},
\end{equation*}
where $\alpha^p_q=\lim_{t\to 0}a^p_q$. The rows of $A_\ell$ span an isotropic subspace $\Lambda_\ell\in\cM_{\emptyset,\{4\}}$, and
\begin{equation*}
    \overline{T\cdot \Lambda_\ell}=Z_{\Lambda_\ell}\subset \cZ_0,
\end{equation*}
because $\cZ$ is $T$-invariant and $\Lambda_\ell=\lim_{t\to 0}\Lambda(t)$ is a limit of subspaces in the general fiber.

In order to find a second component of the special fiber $\cZ_0$, we translate $\Lambda(t)$ by an element of $T$. Let
\begin{equation*}
    \vectt=(t^{-1},t^{-1},t^{-1},1,1,1)\in(\bC^{*})^6,
\end{equation*}
and consider
 \begin{equation*}
   \vectt\cdot A(t)=
    \begin{bmatrix}
0 & 0 & 0 & 0 & 0 & 0 & 0 & 0 & 0 & a^1_{\bar4}\textcolor{red}{t} & \textcolor{blue}{t}a^1_{\bar3} & \textcolor{blue}{t}a^1_{\bar2} & \textcolor{blue}{t} \\
0 & 0 & 0 & 0 & 0 & a^2_6 & a^2_0 & a^2_{\bar6} & a^2_{\bar5} & a^2_{\bar4} & \textcolor{blue}{t}a^2_{\bar3} & \textcolor{blue}{t} & 0\\
0 & 0 & 0 & 0 & a^3_5 & a^3_6 & a^3_0 & a^3_{\bar6} & a^3_{\bar5} & a^3_{\bar4} & \textcolor{blue}{t}  & 0 & 0 \\
0 & 0 & \textcolor{blue}{t^{-1}}a^4_3 & a^4_4 & a^4_5 & a^4_6 & a^4_0 & a^4_{\bar6} & a^4_{\bar5} & 1 & 0 & 0 & 0 \\
0 & \textcolor{blue}{t^{-1}}a^5_2 & \textcolor{blue}{t^{-1}}a^5_3 & a^5_4 & a^5_5 & a^5_6 & a^5_0 & a^5_{\bar6} & 1 & 0 & 0 & 0 & 0 \\
\textcolor{blue}{t^{-1}}a^6_1 & \textcolor{blue}{t^{-1}}a^6_2 & \textcolor{blue}{t^{-1}}a^6_3 & a^6_4 & a^6_5 & a^6_6 & a^6_0 & 1 & 0 & 0 & 0 & 0 & 0 \\
    
    \end{bmatrix}
    ,
\end{equation*}
 for $t\neq 0$. The scaling factors coming from the action of $\vectt$ are marked in blue, to contrast with the red factor of $t$ in the $(1,\bar4)$ position, which comes from the degeneration $A(t)$. These factors of $t$ play different roles, but are the same variable. By the orthogonality of rows 1 and 4 of $A(t)$, we have
 \begin{equation*}
     a^4_3=-\frac{a^4_4a^1_{\bar4}}{a^1_{\bar3}}t.
 \end{equation*}
 Thus, the entry $\textcolor{blue}{t^{-1}}a^4_3$ may be replaced by $(a_3^4)':=-\frac{a^4_4a^1_{\bar4}}{a^1_{\bar3}}$.

 Scaling the \emph{rows} of a matrix by a non-zero scalar does not change its row span, so for $t\neq0$, the row span $\vectt\cdot \Lambda(t)$ of $\vectt\cdot A(t)$ equals that of
 \begin{equation*}
 A^\dagger_r(t):=
    \begin{bmatrix}
0 & 0 & 0 & 0 & 0 & 0 & 0 & 0 & 0 & a^1_{\bar4} & a^1_{\bar3} & a^1_{\bar2} & 1 \\
0 & 0 & 0 & 0 & 0 & a^2_6 & a^2_0 & a^2_{\bar6} & a^2_{\bar5} & a^2_{\bar4} & \textcolor{blue}{t}a^2_{\bar3} & \textcolor{blue}{t} & 0\\
0 & 0 & 0 & 0 & a^3_5 & a^3_6 & a^3_0 & a^3_{\bar6} & a^3_{\bar5} & a^3_{\bar4} & \textcolor{blue}{t}  & 0 & 0 \\
0 & 0 & (a_3^4)' & a^4_4 & a^4_5 & a^4_6 & a^4_0 & a^4_{\bar6} & a^4_{\bar5} & 1 & 0 & 0 & 0 \\
0 & a^5_2 & a^5_3 & \textcolor{blue}{t}a^5_4 & \textcolor{blue}{t}a^5_5 & \textcolor{blue}{t}a^5_6 & \textcolor{blue}{t}a^5_0 & \textcolor{blue}{t}a^5_{\bar6} & \textcolor{blue}{t} & 0 & 0 & 0 & 0 \\
a^6_1 & a^6_2 & a^6_3 & \textcolor{blue}{t}a^6_4 & \textcolor{blue}{t}a^6_5 & \textcolor{blue}{t}a^6_6 & \textcolor{blue}{t}a^6_0 & \textcolor{blue}{t} & 0 & 0 & 0 & 0 & 0 \\
    
    \end{bmatrix}
    .
\end{equation*}
Note that the first row has been restored to the first row of $A(1)$. Sending $t\to 0$ in this new matrix, we have that $\lim_{t\to 0}\vectt\cdot \Lambda(t)$ is equal to the row span of the matrix
 \begin{equation*}
  A^\dagger_r:=
    \begin{bmatrix}
0 & 0 & 0 & 0 & 0 & 0 & 0 & 0 & 0 & a^1_{\bar4} & a^1_{\bar3} & a^1_{\bar2} & 1 \\
0 & 0 & 0 & 0 & 0 & \alpha^2_6 & a^2_0 & a^2_{\bar6} & a^2_{\bar5} & a^2_{\bar4} & \textcolor{blue}{0} & \textcolor{blue}{0} & 0\\
0 & 0 & 0 & 0 & \alpha^3_5 & \alpha^3_6 & a^3_0 & a^3_{\bar6} & a^3_{\bar5} & a^3_{\bar4} & \textcolor{blue}{0}  & 0 & 0 \\
0 & 0 & (\alpha^4_3)' & \alpha^4_4 & \alpha^4_5 & \alpha^4_6 & a^4_0 & a^4_{\bar6} & a^4_{\bar5} & 1 & 0 & 0 & 0 \\
0 & \alpha^5_2 & \alpha^5_3 & \textcolor{blue}{0} & \textcolor{blue}{0} & \textcolor{blue}{0} & \textcolor{blue}{0} & \textcolor{blue}{0} & \textcolor{blue}{0} & 0 & 0 & 0 & 0 \\
\alpha^6_1 & \alpha^6_2 & \alpha^6_3 & \textcolor{blue}{0} & \textcolor{blue}{0} & \textcolor{blue}{0} & \textcolor{blue}{0} & \textcolor{blue}{0} & 0 & 0 & 0 & 0 & 0 \\
    
    \end{bmatrix}
    ,
\end{equation*}
as long as the rows of $A^\dagger_r$ are linearly independent. We have replaced the $*$-entries $a^p_q$ with their limits $\alpha^p_q$ as $t\to 0$; these are the same $\alpha^p_q$ that appeared earlier in $A_\ell$. The new entry $(\alpha_3^4)'$ is by definition equal to
\begin{equation*}
    \lim_{t\to 0}(a^4_3)'=-\frac{\alpha^4_4a^1_{\bar4}}{a^1_{\bar3}}.
\end{equation*}
In particular, by Proposition \ref{prop:*_nonzero} applied to $(I,I')=(\emptyset,\{4\})$, the left-most entry of every row is non-zero, so the rows of $A^\dagger_r$ are indeed linearly independent.

If the $a^{p}_{\barq}$ are general, then suitable row operations may be applied so that the right-most entries of the first four rows of $A^\dagger_r$ become equal to 1, and furthermore, that these 1's appear in distinct columns, moving from top to bottom, right to left. Recall that, by Proposition \ref{prop:*_nonzero}, we have $\alpha^5_2\neq0$. It follows from the orthogonality of rows 1 and 5 that $\alpha^5_3\neq0$, so the 5th row can be scaled so that the rightmost non-zero entry, appearing in column 3, equals 1. Repeating the argument, we have $\alpha^6_1\neq0$, and after suitable row operations, we may arrange for the rightmost non-zero entry in the 6th row to occur in column 2 and to be equal to 1. Therefore, $A^\dagger_r$ is row-equivalent to
 \begin{equation*}
 A_r:=
    \begin{bmatrix}
0 & 0 & 0 & 0 & 0 & 0 & 0 & 0 & 0 & a^1_{\bar4} & a^1_{\bar3} & a^1_{\bar2} & 1 \\
0 & 0 & 0 & 0 & 0 & (\alpha^2_6)' & (a^2_0)' & (a^2_{\bar6})' & (a^2_{\bar5})' & 1 & \textcolor{blue}{0} & \textcolor{blue}{0} & 0\\
0 & 0 & 0 & 0 & (\alpha^3_5)' & (\alpha^3_6)' & (a^3_0)' & (a^3_{\bar6})' & 1 & \textcolor{blue}{0} & \textcolor{blue}{0}  & 0 & 0 \\
0 & 0 & (\alpha^4_3)'' & (\alpha^4_4)' & (\alpha^4_5)' & (\alpha^4_6)' & (a^4_0)' & 1 & \textcolor{blue}{0} & \textcolor{blue}{0} & 0 & 0 & 0 \\
0 & (\alpha^5_2)' & 1 & \textcolor{blue}{0} & \textcolor{blue}{0} & \textcolor{blue}{0} & \textcolor{blue}{0} & \textcolor{blue}{0} & \textcolor{blue}{0} & 0 & 0 & 0 & 0 \\
(\alpha^6_1)' & 1 & \textcolor{blue}{0} & \textcolor{blue}{0} & \textcolor{blue}{0} & \textcolor{blue}{0} & \textcolor{blue}{0} & \textcolor{blue}{0} & 0 & 0 & 0 & 0 & 0 \\
    
    \end{bmatrix}
    .
\end{equation*}
The entries of $A_r$ are rational functions of the entries of $A^{\dagger}_r$, which in turn are rational functions of the original $a^p_{\barq}$. 

Finally, if the original $a^p_{\barq}$ are suitably general, then so is the tuple of new $+$-entries
\begin{equation*}
(a^1_{\bar4},a^1_{\bar3} , a^1_{\bar2},(a^2_0)',  (a^2_{\bar6})' , (a^2_{\bar5})',(a^3_0)' , (a^3_{\bar6})',(a^4_0)')\in \bC^{d(\{4,5\},\{3\})}=\bC^{d(I_+,I')},
\end{equation*}
see Lemma \ref{lem:pr_r_is_dominant}. Therefore, $A_r$ is a general element of $\cM_{\{4,5\},\{3\}}$, whose row span $\Lambda_r$ satisfies
\begin{equation*}
    \Lambda_r:=\lim_{t\to 0}\vectt \cdot \Lambda(t)\in\Sigma_{\{4,5\},\{3\}}.
\end{equation*}
In particular, we have $Z_{\Lambda_r}\subset\cZ_0$.

To summarize, the 1-parameter family of matrices $A(t)$, obtained by replacing one coordinate $a^1_{\bar4}$ in a general point of $\bC^{d(I,I')}$ with $a^1_{\bar4}t$, gives rise to a degeneration $\pi_{I,I'}$ with general fiber $Z_{\Lambda(t)}$. The flat limit of the degeneration was found to contain the orbit closures of subspaces $\Lambda_\ell\in\Sigma_{(I,I'_+)}$ (by computing the naive limit of the level of matrices $A(t)$) and $\Lambda_r\in\Sigma_{(I_+,I')}$ (by computing the limit of a translate of $A(t)$).

We have not argued that the orbit closures $Z_{\Lambda_\ell}$ and $Z_{\Lambda_r}$ are \emph{components} of the special fiber of the degeneration, but we will see that follows for dimension reasons after iterating the degeneration. More seriously, we have not argued that no other orbit closures appear as components of the special fiber. This is only proven in \S\ref{sec:moment_polytopes}, where the criterion of Corollary \ref{cor:moment_map_union} is verified, along with the fact that the components of the special fiber appear with multiplicity 1.

\subsection{The degeneration in general}\label{sec:degen_real}

We first make precise the needed generality hypotheses on $\Lambda\in\Sigma_{I,I'}$. Our degeneration will be defined with respect to \emph{nicest} $\Lambda$, Definition \ref{def:nicest}.

\begin{definition}\label{def:nice}
We say that $\Lambda\in\Sigma_{I,I'}$ is \emph{nice} if the birational map
\begin{equation*}
    \phi_{I,I'}:\bC^{d(I,I')}\dashrightarrow\Sigma_{I,I'},
\end{equation*}
constructed in the previous section, is an isomorphism over a neighborhood of $\Lambda$.
\end{definition}
The notion of niceness depends on the allowed pair $(I,I')$. If $\Lambda$ is nice, then it may be viewed equally well as a matrix $A\in\cM_{I,I'}$ or a point $a
\in \bC^{d(I,I')}$ with non-zero coordinates, corresponding to the $+$-entries of $A$. We also say that $a\in \bC^{d(I,I')}$ or $A\in\cM_{I,I'}$ is nice if it corresponds to a nice $\Lambda\in \Sigma_{I,I'}$. We will freely identify nice $a,A,\Lambda$, but reserve each of these letters to indicate the type of object in question: a vector in $\bC^{d(I,I')}$, a matrix, or a subspace.

\begin{definition}
    We say that $\Lambda\in\Sigma_{I,I'}$ is \emph{very nice} if it is nice and, for every maximal admissible subset $S\in \mAdm_n$, either $S\in\cD(\Lambda)$ or, it is the case that $S\notin\cD(\Lambda_\bullet)$ for \emph{every} $\Lambda_\bullet\in \Sigma_{I,I'}$.

    Similarly, we say that $a\in\bC^{d(I,I')}$ or $A\in\cM_{I,I'}$ are very nice if they correspond to very nice $\Lambda$.
\end{definition}

Recall that $\cD(\Lambda)$ is the set of maximal admissible $S$ for which the $n\times n$ submatrix $A_S$ has full rank. For a fixed $S\in\mAdm_n$, the locus of (nice) $\Lambda\in\Sigma_{I,I'}$ for which $S\in\cD(\Lambda)$ is open, but possibly empty. The very nice locus is the intersection of the non-empty open loci, and in particular non-empty. Equivalently, a nice $\Lambda$ is very nice if it has the largest possible (ordered by inclusion) polytope $\cP(\Lambda)$ among all points of $\Sigma_{I,I'}$.

\begin{lemma}\label{lem:very_nice_saturated}
    Suppose that $(I,I')$ is saturated and that $\Lambda\in \Sigma_{I,I'}$ is nice. Then:
    \begin{enumerate}
    \item[(a)] $\Lambda$ is very nice. 
    \item[(b)] $Z_{\Lambda}=\Sigma_{I,I'}$. In particular, $\dim(Z_\Lambda)=n$.
    \end{enumerate}
\end{lemma}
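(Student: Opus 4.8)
The plan is to prove (b) first and then deduce (a) formally from it.

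For (b): the containment $Z_\Lambda\subseteq\Sigma_{I,I'}$ is immediate, since the flags $F,F'$ are built from $T$-invariant subspaces, so $\Sigma_{I,I'}=\Sigma_I^F\cap\Sigma_{I'}^{F'}$ is $T$-stable and contains the nice point $\Lambda$ (which in fact lies in $\Sigma_I^{F,\circ}\cap\Sigma_{I'}^{F',\circ}$, see the end of \S\ref{sec:charts}). For a saturated pair, $w(I)+w(I')=1+\cdots+(n-1)$, so $\dim\Sigma_{I,I'}=\frac{n(n+1)}{2}-\frac{n(n-1)}{2}=n$. As $Z_\Lambda$ and $\Sigma_{I,I'}$ are both irreducible, the whole statement reduces to showing $\dim Z_\Lambda=n$, i.e.\ that $\mathrm{Stab}_T(\Lambda)$ is finite; I expect it to be trivial. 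This stabilizer computation is the heart of the proof.

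To carry it out, I would use that $\Lambda$, being nice, is the row span of a unique matrix $A\in\cM_{I,I'}$ corresponding to a point of $W_{I,I'}=V_{I,I'}$ (the locus where all $+$-entries are nonzero) by Proposition \ref{prop:solve_for_*_saturated} and Corollary \ref{cor:matrix_injective}. Suppose $t=(t_1,\dots,t_n)\in T$ fixes $\Lambda$; write $\tau_q$ for the scalar by which $t$ scales $e_q$ (so $\tau_i=t_i$, $\tau_{\bar i}=t_i^{-1}$, $\tau_0=1$) and $c(p)$ for the column of the $1$-entry in row $p$ of $A$. Then $t\cdot A$ has the same zero pattern as $A$, with the $1$-entry of row $p$ replaced by $\tau_{c(p)}$; rescaling row $p$ of $t\cdot A$ by $\tau_{c(p)}^{-1}$ produces a matrix $A'\in\cM_{I,I'}$ (orthogonality of rows survives, as $T$ preserves the form and row rescaling is harmless) whose $+$-entries are nonzero rescalings of those of $A$, so $A'$ again corresponds to a point of $W_{I,I'}$ and has row span $t\cdot\Lambda=\Lambda$. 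Injectivity of $\ssp$ on $W_{I,I'}$ (Corollary \ref{cor:matrix_injective}) gives $A'=A$, which forces $\tau_q=\tau_{c(p)}$ for every position $(p,q)$ in the support of $M_{I,I'}$.

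The last ingredient is the structure of $M_{I,I'}$ for saturated $(I,I')$ recorded in \S\ref{sec:M_II'_structure}: its nonzero support is a single monotone lattice path from the southwest to the northeast corner, which therefore meets all $2n+1$ columns, and consecutive rows share the column at which the upward step between them occurs. The equations above say that $q\mapsto\tau_q$ is constant on the support of each row; since these row supports form a connected chain (consecutive rows overlap) whose union is all of $\wtn^0$, the function $\tau$ is globally constant. Taking $q=0$ forces the common value to be $1$, hence $t_i=\tau_i=1$ for all $i$, so $\mathrm{Stab}_T(\Lambda)$ is trivial, $\dim Z_\Lambda=n$, and $Z_\Lambda=\Sigma_{I,I'}$. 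Finally, for (a): by (b) the orbit $T\cdot\Lambda$ is dense in $\Sigma_{I,I'}$, and $\cD$ is constant along it (the torus only rescales maximal minors), so if $S\in\mAdm_n$ satisfies $S\in\cD(\Lambda_\bullet)$ for some $\Lambda_\bullet\in\Sigma_{I,I'}$, then the nonempty open locus $\{\Lambda_\bullet\in\Sigma_{I,I'}:S\in\cD(\Lambda_\bullet)\}$ is dense, meets $T\cdot\Lambda$, and hence forces $S\in\cD(\Lambda)$ — which is exactly very-niceness. The main obstacle is the stabilizer argument in (b): in particular verifying that the rescaled matrix $A'$ re-enters the chart $W_{I,I'}$ so that Corollary \ref{cor:matrix_injective} applies, and extracting from \S\ref{sec:M_II'_structure} the two features of the saturated path (every column is visited, consecutive rows overlap) that collapse all the $\tau_q$ to one value.
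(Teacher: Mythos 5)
Your proof is correct and rests on essentially the same mechanism as the paper's: the paper shows $T$ acts \emph{transitively} on the nice locus by observing that the characters $t\mapsto \tau_{c(p)}/\tau_q$ form an integral basis of the character lattice, while you show the equivalent injectivity statement (trivial stabilizer) by the explicit connectivity argument on the saturated lattice path, and both then conclude (b) from density of the orbit and (a) from $T$-invariance of $\cD$ together with openness of the loci $\{S\in\cD(\Lambda_\bullet)\}$. Your verification that the rescaled matrix $A'$ lands back in $W_{I,I'}$ and your use of Corollary \ref{cor:matrix_injective} are exactly the points the paper leaves implicit, so this is a sound, slightly more detailed rendering of the same argument.
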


\begin{proof}

    We claim that $T$ acts transitively on the nice locus of $\Sigma_{I,I'}$. By Proposition \ref{prop:solve_for_*_saturated}, a nice subspace in $\Lambda\in \Sigma_{I,I'}$ is given by the row span of a matrix $A$ obtained by replacing the $+$-entries of $M_{I,I'}$ with \emph{any} non-zero complex numbers, and solving uniquely for the $*$-entries. Thus, we may identify a nice point of $\Sigma_{I,I'}$ with the tuple
    \begin{equation*}
        a=(a_0,a_{\barn},\ldots,a_{\bar2})\in (\bC^{*})^n=(\bC^{*})^{d(I,I')}
    \end{equation*}
    of $+$-entries appearing in $A$.

    For $q\in[2,n]$, the element $(t_1,\ldots,t_n)\in T$ acts on the coordinate $a_{\barq}$ by $\frac{t_{r(q)}}{t_q}$, where $\overline{r(q)}\in\overline{[q-1]}$ is the column containing the unique 1-entry of $A$ appearing in the same row as $a_{\barq}$. The same is true if $\barq=0$, in which case we interpret the denominator $t_q$ to be equal to 1. The corresponding weights $f_{r(q)}-f_q\in \mathfrak{t}^{*}$ form an integral basis of the weight lattice of $T$. Therefore, after a change of variables, the action of $T$ on the nice locus of $\Sigma_{I,I'}$, identified with $(\bC^{*})^n$, is the standard one, and in particular transitive.

    Now, because the $T$-action preserves the polytope $\cP(\Lambda)$, every nice $\Lambda\in\Lambda_{I,I'}$ has the same polytope. Therefore, we obtain (a). Furthermore, the nice locus of $\Sigma_{I,I'}$ is contained in $Z_\Lambda$, and because the nice locus is dense, we have $\Sigma_{I,I'}\subset Z_\Lambda$. The opposite inclusion is obvious, so we have (b).
\end{proof}

In our degeneration, we will modify the entries of a matrix $A\in\cM_{I,I'}$ until it becomes either a matrix $A_\ell\in \cM_{I,I'_+}$ or a matrix $A_r\in\cM_{I_+,I'}$. The $+$-entries of $M_{I,I'_+}$ and $M_{I_+,I'}$ are proper subsets of the entries of the $+$-entries of $M_{I,I'}$. In particular, we have a natural projection map
\begin{equation*}
    \pr_\ell:\bC^{d(I,I')}\to \bC^{d(I,I'_+)}
\end{equation*}
given by forgetting the coordinate corresponding to the leftmost $+$-entry in the top row of the inner box of a matrix $A\in\cM_{I,I'}$. We refer to this coordinate as the \emph{active} coordinate of $a$, which is identified with an entry of $A$. Using the re-indexing of the rows and columns of $A^{\inbox}$ from \S\ref{sec:charts}, the active coordinate is $b^1_{\overline{k-j+1}}$ if $j>0$, and $b^1_0$ otherwise. (Note that column $\overline{k-j+1}$ of $A^{\inbox}$ is column $\overline{n-j+1}$ of $A$).

We also have a projection map
\begin{equation*}
    \pr_r:\bC^{d(I,I')}\dashrightarrow \bC^{d(I_+,I')},
\end{equation*}
but $\pr_r$ will only be a rational map. Recall from \S\ref{sec:degen_overview} that passing from $\cM_{I,I'}$ to $\cM_{I_+,I'}$ requires row reduction to re-normalize the matrix, after replacing some entries of $A\in\cM_{I,I'}$ with 0.

The map $\pr_r$ is defined as follows. First, view the coordinates of $a\in \bC^{d(I,I')}$ as placed in the columns $0,\barn,\ldots,\bar1$ of a matrix $A\in \cM_{I,I'}$. (The columns $1,\ldots,n$ and the orthogonality do not play an essential role in the definition.)
\begin{enumerate}
    \item For the $+$-entries $a^p_q$ above $A^{\inbox}$ or in the first row of $A^{\inbox}$, which persist in $\cM_{I_+,I'}$, define $\pr_r$ to be the identity on the corresponding coordinates.
    \item For the $+$-entries $a^p_q$ in $A^{\inbox}$ in the $(j+2)$-th row or below, which become 0-entries in $\cM_{I_+,I'}$, define $\pr_r$ to forget the corresponding coordinates.
\end{enumerate}
Next, in rows $2,3,\ldots,j+1$ of $A^{\inbox}$ (the only remaining rows of $A$ in which $+$-entries appear), replace all entries in columns $\overline{k-j},\overline{k-j-1},\ldots,\bar2$ with 0. Here, we again use the re-indexing for rows and columns of $A^{\inbox}$ so that the row set is $1,\ldots,k$ and the column set is $0,\bark,\ldots,\bar1$. Then, apply the unique row operations to these modified rows so that the right-most non-zero entries are re-normalized to 1, and appear in columns $\overline{k-j+1},\ldots,\bark$, from top to bottom. Finally, extract the $+$-entries, where the resulting matrix is viewed in $\cM_{I_+,I'}$.

\begin{example}\label{eg:pr_r}
As in the previous section, take $n=6$ and $(I,I')=(\emptyset,\{3\})$, so that $A^{\inbox}=A$. 

The columns $0,\bar6,\ldots,\bar1$ of $A$ are shown. First, the prescribed entries in rows $2,3,4$ and columns $\bar3,\bar2$ are replaced with 0. The entries in rows 5 and 6, which become 0-entries in $\cM_{I_+,I'}$, are also depicted to become 0. Then, row operations are applied to rows $2,3,4$ so that 1's appear in the required positions; if the original $+$-entries are general, then this can be performed in a unique way. 
\begin{align*}
    \begin{bmatrix}
 0 & 0 & 0 & a^1_{\bar4} & a^1_{\bar3} & a^1_{\bar2} & 1 \\
 a^2_0 & a^2_{\bar6} & a^2_{\bar5} & a^2_{\bar4} & a^2_{\bar3} & 1 & 0\\
 a^3_0 & a^3_{\bar6} & a^3_{\bar5} & a^3_{\bar4} & 1  & 0 & 0 \\
 a^4_0 & a^4_{\bar6} & a^4_{\bar5} & 1 & 0 & 0 & 0 \\
 a^5_0 & a^5_{\bar6} & 1 & 0 & 0 & 0 & 0 \\
 a^6_0 & 1 & 0 & 0 & 0 & 0 & 0 
    \end{bmatrix}
    &\rightarrow
    \begin{bmatrix}
 0 & 0 & 0 & a^1_{\bar4} & a^1_{\bar3} & a^1_{\bar2} & 1 \\
 a^2_0 & a^2_{\bar6} & a^2_{\bar5} & a^2_{\bar4} & \textcolor{blue}{0} & \textcolor{blue}{0} & 0\\
 a^3_0 & a^3_{\bar6} & a^3_{\bar5} & a^3_{\bar4} & \textcolor{blue}{0}  & 0 & 0 \\
 a^4_0 & a^4_{\bar6} & a^4_{\bar5} & 1 & 0 & 0 & 0 \\
 \textcolor{blue}{0}  & \textcolor{blue}{0}  & \textcolor{blue}{0}  & 0 & 0 & 0 & 0 \\
\textcolor{blue}{0}  & \textcolor{blue}{0}  & 0 & 0 & 0 & 0 & 0 
    \end{bmatrix}
    \\
        &\rightarrow
    \begin{bmatrix}
 0 & 0 & 0 & a^1_{\bar4} & a^1_{\bar3} & a^1_{\bar2} & 1 \\
 (a^2_0)' & (a^2_{\bar6})' & (a^2_{\bar5})' & 1 & \textcolor{blue}{0} & \textcolor{blue}{0} & 0\\
 (a^3_0)' & (a^3_{\bar6})' & 1 & \textcolor{blue}{0} & \textcolor{blue}{0}  & 0 & 0 \\
 (a^4_0)' & 1 & \textcolor{blue}{0} & \textcolor{blue}{0} & 0 & 0 & 0 \\
 \textcolor{blue}{0}  & \textcolor{blue}{0}  & \textcolor{blue}{0}  & 0 & 0 & 0 & 0 \\
 \textcolor{blue}{0}  & \textcolor{blue}{0}  & 0 & 0 & 0 & 0 & 0     
    \end{bmatrix}
\end{align*}
Finally, $\pr_r$ outputs the 9-tuple
\begin{equation*}
(a^1_{\bar4},a^1_{\bar3} , a^1_{\bar2},(a^2_0)',  (a^2_{\bar6})' , (a^2_{\bar5})',(a^3_0)' , (a^3_{\bar6})',(a^4_0)')\in \bC^{d(\{4,5\},\{3\})}.
\end{equation*}

More generally, for any allowed pair $(I,I')$ with $k(I,I')=6$ and $j(I,I')=3$, the above matrix entries appear in the inner box of $A\in\cM_{I,I'}$. The map $\pr_r$ is defined in an identical way, retaining the additional information of the $+$-entries along the lattice path $P_{\NE}$, which are unaffected by replacing $I$ with $I_+$.
\end{example}

\begin{lemma}\label{lem:pr_r_is_dominant}
    The map $\pr_r$ is dominant.
\end{lemma}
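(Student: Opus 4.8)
The plan is to prove dominance by exhibiting an explicit rational section $s\colon\bC^{d(I_+,I')}\dashrightarrow\bC^{d(I,I')}$ with $\pr_r\circ s=\mathrm{id}$ on a dense open set. Since source and target are affine spaces, this forces $\pr_r$ to be dominant. The existence of such a section is dimensionally consistent, as $d(I,I')-d(I_+,I')=w(I_+)-w(I)>0$ because $I_+\supsetneq I$.

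First I would sort the coordinates of $\bC^{d(I,I')}$, i.e.\ the $+$-entries of $M_{I,I'}$---which by construction all lie in columns $0,\barn,\ldots,\bar1$---according to how $\pr_r$ acts on them: the \emph{persistent} entries (lying above $A^{\inbox}$, equivalently along $P_{\NE}$, or in the top row of $A^{\inbox}$), on which $\pr_r$ is the identity; the \emph{forgotten} entries, in rows $j+2$ and below of $A^{\inbox}$; and the entries in rows $2,\ldots,j+1$ of $A^{\inbox}$, on which $\pr_r$ performs the zeroing-and-row-reduction step. Dually, the coordinates of $\bC^{d(I_+,I')}$ are the persistent entries together with new coordinates recording the reduced form of rows $2,\ldots,j+1$ of $A^{\inbox}$; matching these coordinate sets and confirming the counts is bookkeeping from the structural description in \S\ref{sec:M_II'_structure}. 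On the persistent coordinates $s$ is the identity, and on the forgotten coordinates $s$ takes arbitrary fixed generic values, since $\pr_r$ does not see them.

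The core of the argument is to invert the row-reduction on rows $2,\ldots,j+1$ of $A^{\inbox}$. Recall that $\pr_r$ first zeroes the entries in columns $\overline{k-j},\ldots,\bar2$ of these rows, then processes them from the top down, clearing from the $p$-th row the entries lying to the right of its target pivot column $\overline{k-j+p-1}$ using only rows $2,\ldots,p-1$, and rescaling that pivot to $1$. These operations are triangular, hence invertible, and the $\cM_{I,I'}$-shape leaves strictly more free entries in each of these rows than the reduced output records (concretely, $1+(k-p)$ versus $j-p+2$, and $k\ge j+1$). So, given any target matrix in the reduced shape, I would back-solve for matching source rows: a row whose original pivot lies in a zeroed column (column $\bar2,\ldots,\overline{k-j}$) becomes fully free in the surviving columns $0,\bark,\ldots,\overline{k-j+1}$ after zeroing and may simply be set equal to the corresponding target row, the reduction then acting trivially on it; for the remaining rows one undoes the triangular elimination one row at a time. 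The forward computation, from which the section can be read off, is carried out for $(I,I')=(\emptyset,\{3\})$ in Example~\ref{eg:pr_r}, which already displays both behaviors.

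The obstacle I expect is bookkeeping rather than anything conceptual: tracking the index ranges of the special columns of $M_{I,I'}$ and $M_{I_+,I'}$ closely enough to pin down which source entries are persistent, which free, and which pivot occupies which column, and checking that the section above lands in the locus where $\pr_r$ is actually defined---where the successive pivots used in the reduction stay nonzero---which holds on a dense open set because the auxiliary constants can be chosen generically. Finally I would treat $j=0$ (cases (ii) and (iv) of Definition~\ref{def:allowed_pairs}) separately: there the range $2,\ldots,j+1$ is empty, so no row-reduction occurs, $r(I,I')$ is saturated, and $\pr_r$ is just the linear projection forgetting the $+$-entries in rows $2,\ldots,k$ of $A^{\inbox}$; the $n$ retained persistent coordinates are precisely the coordinates of $\bC^{d(I_+,I')}$ by Lemma~\ref{lem:dII'}, so $\pr_r$ is surjective, hence dominant.
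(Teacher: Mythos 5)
Your argument is correct and, like the paper's, works by exhibiting an explicit preimage of a general target, but the preimage is found differently. The paper restricts to the $j\times(j+1)$ block of $A^{\inbox}$ on which the row reduction acts, introduces the notion of an $\ell$-sparse matrix, and observes that a general target (which is $(j-1)$-sparse) already becomes a matrix of the source shape after simply moving its bottom $k-j-1$ rows to the top; no back-substitution is needed. You instead build a rational section row by row: source rows whose pivots lie in zeroed columns are set equal to target rows outright, and the rows that retain their $1$-pivots are recovered by inverting the elimination. This works, and it has the virtue of not relying on the reduction step of $\pr_r$ factoring through row equivalence, but it is also where your write-up is thinnest: a source row $p$ with a surviving pivot cannot have its remaining entries chosen freely --- it is forced to equal the target row with pivot in column $\overline{p}$ plus a combination of target rows with pivots strictly to its left, which imposes one linear condition on that row's $+$-entries --- and your count of ``$1+(k-p)$ versus $j-p+2$'' compares free entries of the source row \emph{before} the zeroing step rather than after. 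Neither point is fatal (a section needs only one valid choice per row, and the relevant inequality survives the correction), but they are precisely the bookkeeping you defer, and it is heavier than the phrase ``triangular, hence invertible'' suggests. Your sorting of persistent and forgotten coordinates and your treatment of the degenerate case $j=0$ are correct and match the paper's implicit reduction to the inner box.
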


\begin{proof}
It is enough to consider the $j\times (j+1)$ submatrix $A'$ of $A^{\inbox}$ in which the transformed entries of the image matrix in $\cM_{I_+,I'}$ appear. We say that a $j\times (j+1)$ matrix $A'$ is ``$\ell$-sparse'' if the first $\ell$ anti-diagonals of $A'$ are all zero, but all entries along the $(\ell+1)$-st anti-diagonal are non-zero. Here, the ``first anti-diagonal'' refers to the bottom right entry of $A'$, the ``second anti-diagonal,'' of length 2, refers to entries immediately above and to the left, and so on. If $\ell=-1$, then any matrix is considered $(-1)$-sparse.

Upon restriction to $A'$, the map $\pr_r$ takes in a matrix which is $\max(-1,2j-k)$-sparse, and applies row operations in the unique way to produce a matrix which is $(j-1)$-sparse. It suffices to show that a general $(j-1)$-sparse matrix $A'$ is row-equivalent to some $\ell$-sparse matrix, for any $\ell<j-1$. However, this is clear upon simply moving the bottom $(j-1)-\ell$ rows of $A'$ to the top.
\end{proof}

\begin{definition}\label{def:nicest}
Let $a\in\bC^{d(I,I')}$ be a very nice point. We say that $a$ is \emph{nicest} if either $(I,I')$ is saturated, or both $\pr_\ell(a)$ and $\pr_r(a)$ are nicest (in particular, $\pr_r(a)$ is defined). We say that $A\in\cM_{I,I'}$ or $\Lambda\in\Sigma_{I,I'}$ is nicest if the corresponding point $a$ is.
\end{definition}

The notion of a nicest subspace makes sense by iterating upward the tree $\cT_n$, starting with the saturated allowed pairs. The nice and very nice loci on $\Sigma_{I,I'}$ are manifestly Zariski open and non-empty. By definition, the nicest locus on $\Sigma_{I,I'}$ is therefore Zariski open and non-empty for saturated $(I,I')$. Because the projection maps $\pr_\ell,\pr_r$ are dominant, if the nicest loci for $\ell((I,I'))$ and $r((I,I'))$ are both Zariski open and non-empty, then the same is true of $(I,I')$. By induction, it follows that the nicest locus is Zariski open and non-empty in $\Sigma_{I,I'}$ for any allowed pair $(I,I')$. 

We are now ready to construct the degeneration $\pi_{I,I'}$. Let $a\in\bC^{d(I,I')}$ be a nicest point. For $t\in\bC$, let $a(t)\in \bC^{d(I,I')}$ be the point where the active coordinate $b^1_{\overline{k-j+1}}$ is replaced by $b^1_{\overline{k-j+1}}t$ (we interpret $b^1_{\overline{k+1}}$ to be $b^1_0$ if $j=0$). For $t=1$, and therefore for a non-empty Zariski open subset $U\subset \bA^1$, containing the point $t=0$, we have that $a(t)$ is nicest for all $t\in U-\{0\}$. Therefore, we may consider the corresponding families of matrices $A(t)\in\cM_{I,I'}$ and subspaces $\Lambda(t)\in\OG(n,2n+1)$ relative to $U-\{0\}$.

\begin{definition}
We define
    \begin{equation*}
        \pi_{I,I'}:\cZ\subset\OG(n,2n+1)\times U\to U
    \end{equation*}
    to be the closure of the fiberwise torus orbit $T\cdot \Lambda(t)\subset \OG(n,2n+1)\times (U-\{0\})$.
\end{definition}

By construction, $\cZ$ is equivalently the closure of the orbit of $\Lambda(t)$ over the \emph{generic} point, so is automatically flat over $U$. For a closed point $t\in U-\{0\}$, the fiber $\cZ_t$ contains the fiberwise torus orbit closure $Z_{\Lambda(t)}=\overline{T\cdot \Lambda(t)}$ as an irreducible component. For a \emph{general} closed point, we have in fact $\cZ_t=Z_{\Lambda(t)}$. However, at an arbitrary closed point, $\cZ_t$ could in principle contain additional components, necessarily torus orbit closures of different points of $\OG(n,2n+1)$. In particular, this may occur at $t=1$. The very niceness assumption on $a(t)$ rules this out.

\begin{lemma}\label{lem:very_nice_implies_irreducible}
    For every $t\in U-\{0\}$, the fiber $\cZ_t$ of $\pi_{I,I'}$ is irreducible, and hence equals $Z_{\Lambda(t)}$.
\end{lemma}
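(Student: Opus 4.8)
The plan is to rule out any limit component of $\cZ_t$ beyond $Z_{\Lambda(t)}$ by viewing $t$ as the distinguished point of the family and invoking the polyhedral-decomposition machinery of \S\ref{sec:prelim_degen}, using the very-niceness hypothesis to pin down the generic moment polytope. First I would record that $\cZ\subset\Sigma_{I,I'}\times U$: the flags $F,F'$ consist of coordinate subspaces, hence are $T$-invariant, so the Schubert varieties $\Sigma^F_I,\Sigma^{F'}_{I'}$ and thus the Richardson variety $\Sigma_{I,I'}$ are closed and $T$-invariant; since $A(t)=\pr^{-1}(a(t))\in\cM_{I,I'}$ for $t\in U-\{0\}$, we have $T\cdot\Lambda(t)\subset\Sigma_{I,I'}$, and taking closures gives the containment. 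In particular every irreducible component of every fiber $\cZ_t$ is a torus orbit closure $Z_{\Lambda_\bullet}$ of some $\Lambda_\bullet\in\Sigma_{I,I'}$. Now fix a closed point $t_0\in U-\{0\}$ and regard $\pi_{I,I'}$ as a degeneration with distinguished point $t_0$: it is proper ($\OG(n,2n+1)$ is projective and $\cZ$ is closed), flat (being the closure over the generic point of $U$), fiberwise $T$-invariant, and its general fiber is the torus orbit closure $Z_{\Lambda(t)}$; thus by \cite[Proposition 3.6]{ksz} (equivalently, the discussion preceding Corollary \ref{cor:moment_map_union}), the fiber $\cZ_{t_0}$ is a positive combination of torus orbit closures, all of the dimension of the generic fiber, whose moment polytopes subdivide the generic moment polytope $\cP_{\mathrm{gen}}$.

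The key input is that $\cP_{\mathrm{gen}}=\cP(\Lambda(t_0))$. Since $a(t)$ is nicest, hence very nice, for every $t\in U-\{0\}$, both $\Lambda(t_0)$ and the general-fiber subspace $\Lambda(t)$ are very nice points of $\Sigma_{I,I'}$; as the moment polytope of $Z_{\Lambda_\bullet}$ is determined by $\cD(\Lambda_\bullet)$, which is the same for all very nice points of $\Sigma_{I,I'}$, these polytopes coincide, so $\cP_{\mathrm{gen}}=\cP(\Lambda(t_0))$. Moreover $\Lambda(t_0)\in\cZ_{t_0}$, so $Z_{\Lambda(t_0)}\subset\cZ_{t_0}$, and $\dim Z_{\Lambda(t_0)}=\dim\cP(\Lambda(t_0))=\dim\cP_{\mathrm{gen}}$ equals the dimension of the generic fiber, so $Z_{\Lambda(t_0)}$ is a component. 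I would then apply Corollary \ref{cor:moment_map_union} with $m'=1$ and $\Lambda_1=\Lambda(t_0)$, whose hypothesis $\cP(\Lambda(t))=\cP(\Lambda_1)$ is exactly the equality just established: since the cells of the subdivision are all full-dimensional in $\cP_{\mathrm{gen}}$ and one of them already equals all of $\cP_{\mathrm{gen}}$, there is no room for any other cell, so $Z_{\Lambda(t_0)}$ is the unique component of $\cZ_{t_0}$; in the notation of \S\ref{sec:prelim_degen} its multiplicity is $[\Xi:\Xi_1]=1$ because $\Xi_1=\Xi$, whence $\cZ_{t_0}=Z_{\Lambda(t_0)}$. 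Since $t_0\in U-\{0\}$ was arbitrary, this proves the lemma.

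I expect the only real subtlety to be the bookkeeping that, after designating $t_0$ as the distinguished point, $\pi_{I,I'}$ honestly satisfies the hypotheses needed to run the decomposition of \cite[Proposition 3.6]{ksz} — above all flatness and the identification of the generic moment polytope of the family with $\cP(\Lambda(t_0))$ through very niceness; once that is in place the conclusion is forced, as a polyhedral subdivision cannot contain a cell equal to the whole polytope unless it is the only cell.
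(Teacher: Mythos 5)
Your proposal is correct and follows essentially the same route as the paper: both arguments observe that very-niceness forces $\cP(\Lambda(t_0))$ to equal the moment polytope of the general fiber, and then conclude via Corollary \ref{cor:moment_map_union} (with $m'=1$) that no other cell, hence no other component, can appear. Your write-up just spells out the bookkeeping (the containment $\cZ\subset\Sigma_{I,I'}\times U$, flatness, and the multiplicity computation) that the paper's shorter proof leaves implicit.
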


\begin{proof}
    By construction, the claim holds at a \emph{general} fiber $t=t_1$. By the discussion of \S\ref{sec:prelim_degen}, the fiber over an arbitrary point $t=t_0$ is the union of the torus orbit closure $Z_{\Lambda(t_0)}$ and possibly other torus orbit closures. Because $\Lambda(t_0)$ and $\Lambda(t_1)$ are very nice, the polytopes $\cP(\Lambda(t_0))$ and $\cP(\Lambda(t_1))$ are equal. The Lemma now follows from Corollary \ref{cor:moment_map_union}.
\end{proof}

Thus, $\pi_{I,I'}$ may be viewed as a degeneration of the $t=1$ fiber, which is the torus orbit closure of the nicest subspace $\Lambda\in\Sigma_{I,I'}$ corresponding to $a$, into a union of torus orbit closures on the $t=0$ fiber, given by the flat limit of $Z_{\Lambda(t)}$ as $t\to 0$. The key property of $\pi_{I,I'}$, given below, identifies two general points of the special fiber $\cZ_0\subset\OG(n,2n+1)$.

\begin{proposition}\label{prop:identify_components}
    Let $a_\ell=\pr_\ell(a)$ and $a_r=\pr_r(a)$, which by assumption are nicest points of $\bC^{d(I,I'_+)}$ and $\bC^{d(I_+,I')}$, respectively. Let $\Lambda_\ell,\Lambda_r$ be the corresponding nicest points of $\Sigma_{I,I'_+}$ and $\Sigma_{I_+,I'}$, respectively. 
    
    Then, we have $\Lambda_\ell,\Lambda_r\in \cZ_0$, In particular, we have $Z_{\Lambda_\ell},Z_{\Lambda_r}\subset\cZ_0$. Furthermore, $Z_{\Lambda_\ell},Z_{\Lambda_r}$ are distinct irreducible components of $\cZ_0$.
\end{proposition}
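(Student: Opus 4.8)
The plan is to carry out the computation of the worked example in \S\ref{sec:degen_overview} in general. Write $A\in\cM_{I,I'}$ for the nicest matrix corresponding to $a$, let $A(t)\in\cM_{I,I'}$ be the matrix obtained from $A$ by scaling the active coordinate by $t$, and let $\Lambda(t)$ be its row span, so that $\cZ$ is the closure of $T\cdot\Lambda(t)$ over $U-\{0\}$. It suffices to produce two matrices: $A_\ell:=\lim_{t\to 0}A(t)$, which we will show lies in $\cM_{I,I'_+}$ and represents $\Lambda_\ell$, and a matrix $A_r\in\cM_{I_+,I'}$ representing $\Lambda_r$, obtained as the limit of a torus-translate of $A(t)$. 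Granting this, $\Lambda_\ell=\lim_{t\to0}\Lambda(t)$ and $\Lambda_r=\lim_{t\to0}\vectt(t)\cdot\Lambda(t)$ are limits of points of $\cZ$ lying over $U-\{0\}$ --- for $\Lambda_r$ using that $\cZ$ is fiberwise $T$-invariant --- hence lie in the closed set $\cZ_0$; a second application of $T$-invariance gives $Z_{\Lambda_\ell},Z_{\Lambda_r}\subset\cZ_0$.

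For $\Lambda_\ell$: as $t\to0$ the active coordinate becomes $0$, and, pairing the top row of the inner box with the row below it whose $*$-entry lies opposite the active column, orthogonality forces exactly one further $*$-entry to vanish (the entry $\alpha^4_3$ in the example). The positions of the resulting zeros are exactly those prescribed by $M_{I,I'_+}$, and the remaining $*$-entries stay finite at $t=0$: by the proof of Proposition \ref{prop:solve_for_*} they are recovered by inverting the submatrices $A_p$, whose specializations for the pair $(I,I'_+)$ have non-vanishing determinant because $a_\ell=\pr_\ell(a)$ is nicest. Thus $A_\ell\in\cM_{I,I'_+}$, and since its $+$-entries are precisely the coordinates of $a_\ell$, it represents the nicest point $\Lambda_\ell$.

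For $\Lambda_r$: let $\vectt(t)\in T$ be an explicit one-parameter subgroup --- a tuple of $t^{-1}$'s and $1$'s generalizing $(t^{-1},\ldots,t^{-1},1,\ldots,1)$ of the example --- chosen so that after acting on $A(t)$, clearing the common power of $t$ from each row (harmless for the row span), and letting $t\to0$, one may apply the unique sequence of row operations prescribed in the definition of $\pr_r$ in \S\ref{sec:degen_real} to reach a matrix $A_r$ of shape $M_{I_+,I'}$. These row operations are legitimate at $t=0$: the leading entries involved are among the $*$-entries of $A_\ell$, hence non-zero by Proposition \ref{prop:*_nonzero} for $(I,I'_+)$ (together with one orthogonality relation, as for $\alpha^5_3\neq0$ in the example), and by Lemma \ref{lem:pr_r_is_dominant} and the niceness of $a_r=\pr_r(a)$ the $+$-entries of $A_r$ equal $a_r$, so $A_r$ represents $\Lambda_r$. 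I expect this to be the main obstacle: unlike the example, where $k=n$ forces $A^{\inbox}=A$, one must track the rows above and below the inner box together with the $*$-entries along $P_{\SW}$, and verify at each stage that the one-parameter subgroup is compatible with the quadratic form --- the type B subtlety flagged in the introduction. What makes this tractable is the chart description of \S\ref{sec:charts}, which already isolates how the orthogonality relations act within the inner box.

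Finally, distinctness is immediate: being nicest, $\Lambda_\ell$ lies in the open Schubert stratum $\Sigma_I^{F,\circ}\cap\Sigma_{I'_+}^{F',\circ}$ and $\Lambda_r$ in $\Sigma_{I_+}^{F,\circ}\cap\Sigma_{I'}^{F',\circ}$; these strata are $T$-invariant and disjoint since $I\neq I_+$, so the irreducible varieties $\overline{T\cdot\Lambda_\ell}$ and $\overline{T\cdot\Lambda_r}$ have generic points in disjoint opens and cannot coincide. That each is a \emph{component} of $\cZ_0$ follows once one knows $\dim Z_{\Lambda_\ell}=\dim Z_{\Lambda_r}=\dim Z_{\Lambda(t)}$, since $\cZ_0$ is then equidimensional of that dimension (by \S\ref{sec:prelim_degen}, its components are torus orbit closures of dimension $\dim Z_{\Lambda(t)}$) and any irreducible closed subset of full dimension is a component. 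This dimension equality is the only non-formal input, and it is proved by downward induction on $\cT_n$: for saturated pairs $Z_{\Lambda_\ell}=\Sigma_{I,I'_+}$ is $n$-dimensional by Lemma \ref{lem:very_nice_saturated}(b), and the inductive step is precisely the containment $Z_{\Lambda_\ell}\subset\cZ_0$ proved above, which forces $\dim Z_{\Lambda(t)}=n$. Setting up this induction so it interlocks cleanly with the present proposition is the part of the write-up requiring the most care.
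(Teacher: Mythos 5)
Your proposal follows essentially the same route as the paper, which isolates the limit computations $\Lambda_\ell=\lim_{t\to0}\Lambda(t)$ and $\Lambda_r=\lim_{t\to0}\vectt\cdot\Lambda(t)$ as a separate statement (Proposition \ref{prop:identify_limit_subspaces}) and then deduces membership in $\cZ_0$ from $T$-invariance, distinctness from $\Lambda_\ell\notin\Sigma_{I_+,I'}\supset Z_{\Lambda_r}$, and componenthood from the dimension count $\dim Z_\Lambda=n$ obtained by iterating down $\cT_n$ to a saturated pair (Lemma \ref{lem:very_nice_saturated}) and bounding above by $\dim T=n$ --- all of which you reproduce. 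The one step you flag as the main obstacle is exactly that proposition, and your sketch of it (the subgroup $(t^{-1},\ldots,t^{-1},1,\ldots,1)$ with $j$ trailing $1$'s, clearing the row factors of $t$, setting $t=0$, and applying the row operations defining $\pr_r$, with non-vanishing of the leading entries supplied by Proposition \ref{prop:*_nonzero} for $(I,I'_+)$ and niceness of $\pr_\ell(a),\pr_r(a)$) is precisely the paper's four-step procedure, so no substantive idea is missing.
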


To show that $\Lambda_\ell,\Lambda_r\in \cZ_0$, we have more precisely:

\begin{proposition}\label{prop:identify_limit_subspaces}
Write
\begin{equation*}
    \vectt=(t^{-1},\ldots,t^{-1},1,\ldots,1)\in T=(\bC^*)^n,
\end{equation*}
where the last $j$ coordinates are equal to 1. Then, we have
\begin{align*}
    \Lambda_\ell&=\lim_{t\to 0}\Lambda(t),\\
    \Lambda_r&=\lim_{t\to 0}\vectt\cdot \Lambda(t).
\end{align*}
 The symbol $\lim_{t\to 0}$ denotes flat limit in $\OG(n,2n+1)$.
\end{proposition}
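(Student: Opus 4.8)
The plan is to carry out in general the two matrix computations illustrated in \S\ref{sec:degen_overview}. Fix the nicest point $a$, realized as a matrix $A=A(1)\in\cM_{I,I'}$, and for $t$ in a punctured neighbourhood of $0$ regard the $*$-entries of $A(t)$ as the rational functions of $t$ obtained by solving the orthogonality relations among its rows. By Proposition \ref{prop:solve_for_*} these functions are unique, and are produced row by row via Cramer's rule, with denominators the determinants $\det A_p(t)$ of the submatrices appearing in the proofs of Propositions \ref{prop:solve_for_*_box}, \ref{prop:solve_for_*_saturated} and \ref{prop:solve_for_*}. Since the only occurrence of $t$ in any $A_p(t)$ is the single active entry of the top row of $A^{\inbox}$, specializing to $t=0$ merely replaces that entry by $0$; one then checks directly that each $A_p(0)$ either coincides with the submatrix occurring for the corresponding row in the proof of Proposition \ref{prop:solve_for_*} applied to $\ell(I,I')=(I,I'_+)$, or --- for the unique index $p$ at which these two families of submatrices differ --- is, after a permutation of its rows, block-triangular whose diagonal blocks are that $\ell(I,I')$-submatrix (possibly trivial) and the single $+$-entry $b^1_{\overline{k-j}}$ of $A^{\inbox}$ lying immediately to the right of the active entry; this is the phenomenon of $A_4(0)$ in Example \ref{eg:innerbox}. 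In every case $\det A_p(0)\neq0$: the $\ell(I,I')$-submatrix has nonzero determinant since $\pr_\ell(a)$, being nicest, is nice, hence lies in the open locus $V_{I,I'_+}$ from the proof of Proposition \ref{prop:solve_for_*} applied to $\ell(I,I')$; and $b^1_{\overline{k-j}}\neq0$ by niceness of $a$. Hence every $*$-entry of $A(t)$ is regular at $t=0$, and the entrywise limit $A_\ell:=\lim_{t\to0}A(t)$ exists.

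It then remains to identify $A_\ell$. Its rows are orthogonal, being a limit of tuples of pairwise orthogonal vectors; its active entry is $0$; and the orthogonality of the top row of $A_\ell^{\inbox}$ with the row of $A_\ell^{\inbox}$ carrying the $*$-entry $b^{j+1}_{k-j}$ --- or, when $j=0$, the self-orthogonality of that top row, which isolates the $*$-entry $b^1_{k}$ --- forces that $*$-entry also to vanish. Consequently $A_\ell$ has the shape of a matrix in $\cM_{I,I'_+}$, and its $+$-entries are exactly the coordinates of $\pr_\ell(a)=a_\ell$. Its remaining $*$-entries satisfy precisely the orthogonality relations defining $\cM_{I,I'_+}$; since $a_\ell$ is nice, the solution of those relations is unique, so $A_\ell$ is the matrix in $\cM_{I,I'_+}$ with $+$-entries $a_\ell$, i.e.\ $A_\ell$ represents $\Lambda_\ell$. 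Finally $A_\ell$ has linearly independent rows --- by Proposition \ref{prop:*_nonzero}, or simply because the columns of its $1$-entries are distinct --- so the Pl\"ucker vector of $A(t)$ extends to a nonvanishing vector at $t=0$; therefore the flat limit $\lim_{t\to0}\Lambda(t)$ in $\OG(n,2n+1)$ is the row span of $A_\ell$, which is $\Lambda_\ell$. This establishes the first formula.

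For the second formula I would argue in the same spirit, with more bookkeeping. The element $\vectt$ scales columns $1,\dots,n-j$ by $t^{-1}$ and columns $\overline{1},\dots,\overline{n-j}$ by $t$, fixing the rest. Using the orthogonality relations of $A(t)$ exactly as in \S\ref{sec:degen_overview}, each entry of $\vectt\cdot A(t)$ naively carrying a factor $t^{-1}$ is in fact regular at $t=0$: that $t^{-1}$ is cancelled by a factor $t$ in the relevant $*$-entry, which is produced by pairing the active top row of $A^{\inbox}$ with the row directly below it (the identity $a^4_3=-a^4_4 a^1_{\bar4}(a^1_{\bar3})^{-1}t$ in \S\ref{sec:degen_overview}). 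Rescaling each row, which leaves the row span unchanged, so that the top row of $A^{\inbox}$ is restored to its $t=1$ value and each lower row is cleared of its leading power of $t$, one obtains a family $A^\dagger_r(t)$ regular at $t=0$ whose limit $A^\dagger_r$ is precisely the matrix built in the first steps of the definition of $\pr_r$: the $+$-entries of $A^{\inbox}$ in rows $j+2$ and below have become $0$-entries, and in rows $2,\dots,j+1$ the entries in columns $\overline{k-j},\dots,\bar2$ have become $0$. One then checks, as in \S\ref{sec:degen_overview} and using Proposition \ref{prop:*_nonzero} for $(I,I'_+)$ together with the cancellations above, that the rows of $A^\dagger_r$ are independent and that the final normalization step of $\pr_r$ may be carried out, producing a matrix $A_r\in\cM_{I_+,I'}$ row-equivalent to $A^\dagger_r$ whose $+$-entries are exactly $\pr_r(a)=a_r$. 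Since $a_r$ is nicest, $A_r$ represents $\Lambda_r$, and, as in the first part, the flat limit $\lim_{t\to0}\vectt\cdot\Lambda(t)$ equals the row span of $A_r$, namely $\Lambda_r$.

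The main obstacle will be the bookkeeping in the second formula: tracking the powers of $t$ introduced first by $\vectt$ and then by the row rescalings, and verifying that they cancel exactly --- no more and no less --- against the powers of $t$ forced into the $*$-entries by orthogonality, so that $A^\dagger_r(t)$ is both regular and of full rank at $t=0$, and that the resulting row reduction matches the recipe defining $\pr_r$ on the nose. I expect the degenerate cases $j=0$ and $k=n$, and the possibility that $\ell(I,I')$ or $r(I,I')$ is already saturated, to need only the notational adjustments already arranged in \S\ref{sec:allowed_pairs} and \S\ref{sec:M_II'_structure}.
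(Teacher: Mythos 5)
Your proposal is correct and follows essentially the same route as the paper's proof: for $\Lambda_\ell$, regularity of the $*$-entries at $t=0$ via non-vanishing of the Cramer denominators (reducing to the $\ell(I,I')$ system once the forced vanishing of $b^{j+1}_{k-j}$ is observed), and for $\Lambda_r$, the same column-scaling/row-rescaling/limit/row-reduction procedure matched against the definition of $\pr_r$, with Proposition \ref{prop:*_nonzero} for $(I,I'_+)$ controlling the lower rows. One small imprecision: the $*$-entry whose factor of $t$ cancels the $t^{-1}$ from $\vectt$ is $b^{j+1}_{k-j}$, obtained by pairing the top row of the inner box with row $j+1$ (not the row directly below it), and the remaining $t^{-1}$'s are absorbed by the row rescalings rather than by orthogonality.
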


\begin{proof}
The general argument is essentially the same as that in the example of the previous section. Let $A(t)\in\cM_{I,I'}$, where $t\in U-\{0\}$ be the family of matrices associated to $a(t)$, and let $A_\ell$ be the matrix associated to $a_\ell$ and $\Lambda_\ell$. Because $A(t)$ is nicest for all $t\in U-\{0\}$, the $*$-entries $a^p_q$ of $A(t)$ are rational functions of $t$ defined for all $t\in U-\{0\}$, given by the unique solution to the the system of equations in an equal number of variables from Proposition \ref{prop:solve_for_*}. 

The same system of equations also has a unique solution when $t=0$. Indeed, the orthogonality of rows 1 and $j+1$ of $A(t)^{\inbox} $ forces the inner box entry $b^{j+1}_{k-j}$ to be a multiple of $t$, and in particular to evaluate to 0 when $t=0$. Then, by assumption, $\pr_\ell(a)$ is nicest, so there is a unique solution to this system of equations once $b^{j+1}_{k-j}$ is required to be zero, which amounts to the non-vanishing of a collection of determinants of sub-matrices of $A(0)$. It follows that the denominators of the functions $a^p_q$ do not vanish at $t=0$, and the collection of evaluations $a^p_q(0)$ is the collection of $*$-entries of $A_\ell$. We conclude that $\lim_{t\to 0}A(t)=A_\ell$, and hence that $\lim_{t\to 0}\Lambda(t)=\Lambda_\ell$.

Next, we compute $\lim_{t\to 0}\vectt\cdot \Lambda(t)$ by the following procedure:
\begin{enumerate}
    \item[(i)] Compute $\vectt\cdot A(t)$, by scaling columns $1,\ldots,n-j$ by $t^{-1}$ and columns $\overline{n-j},\ldots,\bar1$ by $t$.
    \item[(ii)] Scale all rows above the inner box, and the first row of the inner box of $\vectt\cdot A(t)$ by $t^{-1}$, and scale all rows below row $j+1$ of the inner box by $t$. (This includes all rows below the inner box, but does not include row $j+1$ itself.) Let $A^\dagger_r(t)$ be the resulting matrix.
    \item[(iii)] Set $t=0$. Let $A^\dagger_r(0)$ be the resulting matrix.
    \item[(iv)] Row reduce in the unique possible way so that the rightmost non-zero entry of every row is 1, and these rightmost entries move from top to bottom.
\end{enumerate}
Step (ii) restores all rows above and below the inner box to match $A(1)$; these operations only have a non-trivial effect on the inner box of $A$. Step (ii) also restores the first row of $A^{\inbox}$. Recall that the entry $b^{j+1}_{k-j}$ of $A(t)^{\inbox}$ is a multiple of $t$, so scaling column $n-j$ by $t^{-1}$ cancels this multiple of $t$. All additional entries in columns 1 through $n-j$, which have been multiplied by $t^{-1}$ in steps (i), are multiplied by $t$ in step (ii). Because all of the $*$-entries $a^p_q$ are well-defined at $t=0$, step (iii) makes sense.

Working from top to bottom, step (iv) first has an effect in row 2 of $(A^\dagger_r(0))^{\inbox}$. Between rows 2 and $j+1$, the row operations performed are precisely those in the definition of the rational map $\pr_r$, and by assumption, the resulting collection $\pr_r(a)$ of $+$-entries is nicest.

In the remaining rows $p=j+2,\ldots,k$ of $(A^\dagger_r(0))^{\inbox}$, the leftmost $k-p$ entries equal 0, followed by 
\begin{equation*}
    \beta^p_{k+1-p},\ldots,\beta^p_{k-j},
\end{equation*}
where $\beta^p_q:=b^p_q(0)$, and the remaining entries equal 0. By Proposition \ref{prop:*_nonzero} applied to $(I,I'_+)$, we know that $\beta^p_{k+1-p}\neq0$. In row $p=j+2$, where the only two non-trivial entries are $\beta^p_{k-j-1},\beta^p_{k-j}$, orthogonality with row 1 (of $(A^\dagger_r(0))^{\inbox}$) shows that $\beta^p_{k-j}\neq0$ as well. Thus, row $j+2$ can be scaled so that the entry in the $(p,k-j)$ position becomes 1, and the appropriate multiple can be added to rows $j+3,\ldots,k$ so that the remaining entries in column $k-j$ in rows $j+3$ and below are all 0. In particular, row $j+3$ now only has two non-trivial entries; pairing with row 1 of $(A^\dagger_r(0))^{\inbox}$ shows that they both are non-zero.

Continuing in this fashion, we find in fact that the matrix obtained by row-reducing  $(A^\dagger_r(0))^{\inbox}$ lies in $\cM_{I_+,I'}$: in particular, rows $p=j+2,\ldots,k$, previously of $(A^\dagger_r(0))^{\inbox}$, now each have exactly one $*$- and one 1-entry, and become rightward steps of the lattice path $P_{\SW}$. Because $\pr_r(a)$ is nicest, the new matrix must be precisely the unique $A_r\in\cM_{I_+,I'}$ whose row span is $\Lambda_r$. It follows that $\lim_{t\to 0}\vectt\cdot \Lambda(t)=\Lambda_r$.

\end{proof}

\begin{proof}[Proof of Proposition \ref{prop:identify_components}]
    By construction, the subscheme $\cZ\subset \OG(n,2n+1)\times U$ is $T$-invariant, so Proposition \ref{prop:identify_limit_subspaces} immediately implies that  $\Lambda_\ell,\Lambda_r\in \cZ_0$ and that $Z_{\Lambda_\ell},Z_{\Lambda_r}\subset\cZ_0$. To see that $Z_{\Lambda_\ell}\neq Z_{\Lambda_r}$, note simply that $\Lambda_\ell\notin\Sigma_{I_+,I'}$, but that $Z_{\Lambda_r}\subset \Sigma_{I_+,I'}$. 
    
    Finally, to see that $Z_{\Lambda_\ell}$ and $Z_{\Lambda_r}$ are irreducible components of $\cZ_0$, it suffices to show that they both have the same dimension as $Z_\Lambda$. It is immediate that $\dim(Z_\Lambda)\ge \dim(Z_{\Lambda_\ell}),\dim(Z_{\Lambda_r})$. We claim that, in fact, for any nicest $\Lambda$ with respect to any allowed pair $(I,I')$, we have that $\dim(Z_\Lambda)=n$. Indeed, iterating the degeneration $\pi$ downward in $\cT_n$ until reaching a saturated pair $(I,I')$, we have $\dim(Z_\Lambda)\ge \dim(\Sigma_{I,I'})=n$, by Lemma \ref{lem:very_nice_saturated}. On the other hand, we have the opposite inequality, because $\dim(T)=n$. This completes the proof.
\end{proof}

Let $\Lambda\in\OG(n,2n+1)=\Sigma_{\emptyset,\emptyset}$ be a nicest point. Iterating the degenerations $\pi_{I,I'}$ downward the tree $\cT_n$ gives a sequence of degenerations
\begin{equation*}
Z_\Lambda \rightsquigarrow \bigcup_{I\subset[n-1]}\Sigma_{I,I^c},
\end{equation*}
where $I^c$ denotes the complement of $I\subset[n-1]$. Indeed, the set of leaves of $\cT_n$ is identified with the set of saturated pairs $(I,I')$, which in turn is identified with the set of subsets $I\subset[n-1]$. By Lemma \ref{lem:very_nice_saturated}(b), the orbit closure associated to a nice $\Lambda\in\Sigma_{I,I^c}$ is the entire Richardson variety $\Sigma_{I,I^c}$.

We have not yet ruled out the existence of additional irreducible components in the special fibers of any of the intermediate degenerations $\pi_{I,I'}$, nor have we shown that the components $\Sigma_{I,I^c}$ appear with multiplicity 1. This is done in the next section.

\section{Moment polytopes}\label{sec:moment_polytopes}

\subsection{The polyhedral decomposition}\label{sec:moment_polytopes_statement}

In this section and \S\ref{sec:rank_ineqs}, we show:

\begin{proposition}\label{prop:polytope_union}
    Let $\Lambda\in\Sigma_{\emptyset,\emptyset}=\OG(n,2n+1)$ be a nicest subspace. For each $I\subset[n-1]$, let $\Lambda_I\in\Sigma_{I,I^c}$ be a nice subspace. Then, we have
    \begin{equation*}
        \bigcup_{I\subset[n-1]}\cP(\Lambda_{I})=[0,1]^n=\cP(\Lambda).
    \end{equation*}
\end{proposition}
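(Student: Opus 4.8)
The plan is to establish, by induction up the tree $\cT_n$, the following more general statement: for every allowed pair $(I,I')$ and any nicest (hence very nice) subspace $\Lambda_{I,I'}\in\Sigma_{I,I'}$,
\[
    \cP(\Lambda_{I,I'})=\bigcup_{(J,J^c)}\cP(\Lambda_{J,J^c}),
\]
the union being over the saturated pairs $(J,J^c)$ labelling leaves of $\cT_n$ below $(I,I')$, with these polytopes having pairwise disjoint interiors. The polytope of a very nice point is maximal among points of $\Sigma_{I,I'}$, hence independent of the choice, and for saturated $(J,J^c)$ every nice point is very nice by Lemma \ref{lem:very_nice_saturated}; taking $(I,I')=(\emptyset,\emptyset)$, where $\Lambda$ is general and $\cP(\Lambda)=[0,1]^n$, recovers the Proposition. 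The base case (a leaf) is vacuous, so the content is the inductive step: for a non-saturated allowed pair $(I,I')$ with children $\ell(I,I')=(I,I'_+)$ and $r(I,I')=(I_+,I')$, writing $j=j(I,I')$ and $k=k(I,I')$, I would show
\[
    \cP(\Lambda_{I,I'})=\cP(\Lambda_{I,I'_+})\cup\cP(\Lambda_{I_+,I'}),
\]
with the two pieces meeting only along a common facet.

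One inclusion, together with the facet-only overlap, comes for free from \S\ref{sec:degen}. By Proposition \ref{prop:identify_components}, $Z_{\Lambda_{I,I'_+}}$ and $Z_{\Lambda_{I_+,I'}}$ are distinct irreducible components of the special fibre $\cZ_0$ of the degeneration $\pi_{I,I'}$, whose general fibre is $Z_{\Lambda_{I,I'}}$; by \cite[Proposition 3.6]{ksz}, invoked as in \S\ref{sec:prelim_degen}, the moment polytopes of the components of $\cZ_0$ subdivide $\cP(\Lambda_{I,I'})$, so in particular $\cP(\Lambda_{I,I'_+})\cup\cP(\Lambda_{I_+,I'})\subseteq\cP(\Lambda_{I,I'})$ and the two cells share a face, hence have disjoint interiors.

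The reverse inclusion is the heart of the matter, and it is what \S\ref{sec:rank_ineqs} is for. The idea is to produce an explicit inequality description of $\cP(\Lambda_{I,I'})$ for every allowed pair. For a very nice $\Lambda_{I,I'}$, a maximal admissible $S$ lies in $\cD(\Lambda_{I,I'})$ exactly when the $S$-submatrix of $A\in\cM_{I,I'}$ is invertible for generic $+$-entries, i.e.\ when the corresponding determinant, a rational function on the chart $\bC^{d(I,I')}$ of Corollary \ref{cor:chart}, is not identically zero; since the $*$-entries of $A$ are solved for triangularly as in Propositions \ref{prop:solve_for_*_box} and \ref{prop:solve_for_*_saturated}, this reduces to a combinatorial condition on $S$ dictated by the staircase shape of $M_{I,I'}$, after which Corollary \ref{cor:rank_in_terms_of_M} yields $\rk_{\Lambda_{I,I'}}(S)$, hence $g_{\Lambda_{I,I'}}(S)$, for all admissible $S$. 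I expect the outcome to be that $\cP(\Lambda_{I,I'})$ is the intersection of $[0,1]^n$ with the half-spaces attached to the edges of the downward path in $\cT_n$ from the root to $(I,I')$, where a left step at a pair with parameters $j,k$ contributes
\[
    \sum_{q=n-k+1}^{n-j}x_q\le k-j-1
\]
and a right step contributes the opposite inequality $\sum_{q=n-k+1}^{n-j}x_q\ge k-j-1$. Granting this, the inductive step is immediate: the paths to $(I,I'_+)$ and $(I_+,I')$ extend that to $(I,I')$ by one edge, carrying respectively $\sum_{q=n-k+1}^{n-j}x_q\le k-j-1$ and its complement, so $\cP(\Lambda_{I,I'_+})$ and $\cP(\Lambda_{I_+,I'})$ are precisely the two closed halves into which the hyperplane $\sum_{q=n-k+1}^{n-j}x_q=k-j-1$ divides $\cP(\Lambda_{I,I'})$, and their union is all of $\cP(\Lambda_{I,I'})$. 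Equivalently, the leaf polytopes $\cP(\Lambda_{I,I^c})$ are the chambers of a binary space partition of the cube, so they manifestly cover $[0,1]^n$.

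The main obstacle will be this facet description, above all verifying that no rank inequality $x(S)\le g_{\Lambda_{I,I'}}(S)$ beyond the cube inequalities and the path half-spaces is active, which needs a careful analysis of which square submatrices of $M_{I,I'}$ have non-vanishing generic determinant once orthogonality is imposed, and of the half-spaces they induce. A secondary point to nail down is that $\Lambda_{I,I'_+}=\pr_\ell(a)$ and $\Lambda_{I_+,I'}=\pr_r(a)$ obtained from a nicest $a$ are very nice, so that the polytope description applies to them; this follows since the nicest locus is dense and very niceness is the open, dense condition cutting out the maximal polytope in $\Sigma_{I,I'}$.
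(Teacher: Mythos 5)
Your overall architecture differs from the paper's: you induct down the tree $\cT_n$ using an explicit half-space description of each intermediate polytope $\cP(\Lambda_{I,I'})$, whereas the paper never describes these polytopes by facets. Instead, given $x\in[0,1]^n$ it directly manufactures a saturated pair $I$ from the integer crossings of the partial sums $y_q=x_1+\cdots+x_q$ (Definition \ref{def:I_associated_to_x}) and then verifies every defining inequality $x(S)\le g_{\Lambda_I}(S)$ of $\cP(\Lambda_I)$ by splitting $S$ along the special columns. Disjointness of interiors is not needed for the covering statement and is handled separately. So the comparison is moot only if your route actually works; unfortunately it does not as written.

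The gap is the facet description itself, which you correctly flag as ``the main obstacle'' but do not prove, and which is false as stated. Take $n=4$ and the non-saturated pair $(I,I')=(\{3\},\{2\})$, for which $j=0$, $k=2$, with children $\ell=(\{3\},\{1,2\})$ and $r=(\{1,3\},\{2\})$. Your rule assigns to the right step the half-space $\sum_{q=n-k+1}^{n-j}x_q=x_3+x_4\ge k-j-1=1$. But a nice $\Lambda\in\Sigma_{\{1,3\},\{2\}}$ is the row span of a matrix of shape
\begin{equation*}
\begin{bmatrix} 0&0&0&0&0&0&+&+&1 \\ 0&0&0&*&+&+&1&0&0 \\ 0&*&*&1&0&0&0&0&0 \\ *&1&0&0&0&0&0&0&0 \end{bmatrix},
\end{equation*}
and the submatrix on columns $S=\{1,2,\bar3,\bar4\}$ is invertible (its columns are, up to nonzero scalars, $h_4$, $a^3_2h_3+h_4$, $a^1_{\bar3}h_1+h_2$, $h_2$). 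Hence $S\in\cD(\Lambda)$ and $\chi(S)=(1,1,0,0)\in\cP(\Lambda_{\{1,3\},\{2\}})$, which violates $x_3+x_4\ge1$. In fact the hyperplane separating the two children here is $x_1+x_2+x_3+x_4=2$: the shared vertices of the two child polytopes are exactly the common bases of coordinate sum $2$, the left-only vertices have sum $1$, and the right-only vertices have sum $3$. The correct linear functional is $\sum_{q=1}^{n-j}x_q$ (matching the cocharacter $\vectt$ used in Proposition \ref{prop:identify_limit_subspaces}), not $\sum_{q=n-k+1}^{n-j}x_q$; your formula happens to agree only when $k=n$, which is why it is consistent with the worked example of \S\ref{sec:degen_overview}. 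The constant also needs correcting (here it is $2$, not $k-j-1=1$). Even with the right hyperplanes, one must still prove that no other rank inequality is active and that the resulting intersection of half-spaces is integral; none of this is supplied, so the reverse inclusion --- the entire content of the Proposition --- remains unproven. The forward inclusion via \cite[Proposition 3.6]{ksz} and Proposition \ref{prop:identify_components} is fine but is not where the difficulty lies.
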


In particular, the criterion of Corollary \ref{cor:moment_map_union} holds. Our main result, Theorem \ref{thm:degen}, will follow from the degeneration constructed in the previous section, along with the multiplicity 1 statement of Proposition \ref{prop:multiplicity_1}.

The decomposition of $[0,1]^n$ in Proposition \ref{prop:polytope_union} has previously appeared in \cite[Proposition 6.1]{csvy}, in the context of \emph{lattice path delta matroids} arising from the type $C$ Grassmannian $\LG(n,2n)$. One can match our definition of the components of the decomposition with theirs, but we instead include a self-contained proof.

Recall from Definition \ref{def:polytope} that for any subspace $\Lambda\in\OG(n,2n+1)$, the convex polytope $\cP(\Lambda)$ is cut out by the inequalities 
\begin{equation*}
    x(S) \le g_{\Lambda}(S) := \rk_{\Lambda}(S) - |S \cap \overline{[n]}|,
\end{equation*}
for all $S \in \Adm_n$. The definition of $x(S)$ is given in Definition \ref{def:x(S)}.

The inclusion $\bigcup_{I\subset[n-1]}\cP(\Lambda_{I})\subset [0,1]^n$ is immediate. The harder direction is to show that any point $x\in \cP(\Lambda)$ lies in $\cP(\Lambda_I)$ for some $I \subset[n-1]$. Fix, one and for all, such a point $x=(x_1,\ldots,x_n)\in[0,1]^n$. We first identify the candidate subset $I$. For $q=1,\ldots,n$, write
\begin{equation*}
    y_q:=x_1+\cdots+x_q,
\end{equation*}
and note that $0\le y_1\le y_2\le\cdots\le y_n\le n$.

\begin{definition}\label{def:I_associated_to_x}
    Let $x\in[0,1]^n$ be the point above. For all integers $\ell\in[1,\lceil y_n\rceil -1]$, let $\lambda_\ell\in[n-1]$ be the unique integer such that 
    \begin{equation}\label{eq:I'_inequalities}
       y_{n-\lambda_{\ell}}\le\ell< y_{n+1-\lambda_{\ell}}.
    \end{equation}
    Define the set
    \begin{equation*}
        \SC(x)=\{n+1-\lambda_1,\ldots,n+1-\lambda_{s}\}\subset\{2,\ldots,n\},
    \end{equation*}
    where $s=\max(0,\lceil y_n\rceil-1)$.
    
    Then, define $I=\{\lambda_1,\ldots,\lambda_{s}\}\subset[n-1]$. Define also $I'=\{\lambda'_1,\ldots,\lambda'_{s'}\}$, where $\lambda'_1>\cdots>\lambda_{s'}$, to be the complement of $I$ in $[n-1]$.
\end{definition}
Note that
\begin{equation*}
    n+1-\lambda_1<\cdots<n+1-\lambda_{s}.
\end{equation*}
Indeed, for $\ell=1,\ldots,s-1$, we have $y_{n-\lambda_\ell}\le \ell$, hence
\begin{equation*}
    y_{n+1-\lambda_\ell}=y_{n-\lambda_\ell}+x_{n+1-\lambda_\ell}\le \ell+1.
\end{equation*}
On the other hand, we have $\ell+1<y_{n+1-\lambda_{\ell+1}}$, so $y_{n+1-\lambda_\ell}<y_{n+1-\lambda_{\ell+1}}$. Because the $y_q$ are non-decreasing, we therefore have $n+1-\lambda_\ell<n+1-\lambda_{\ell+1}$. The set $\SC(x)$ is the set of special columns (\S\ref{sec:M_II'_structure}) of the lattice path $P_{\SW}$ for the matrix $M_{I,I'}=M_{I,I^c}$.

\begin{example}\label{eg:find_I}
    Let
    \begin{equation*}
        x=(x_1,x_2,x_3,x_4,x_5,x_6,x_7,x_8,x_9)=\left(\frac{9}{10},\frac{9}{10},\frac{1}{10},\frac{9}{10},\frac{9}{10},\frac{9}{10},\frac{1}{10},\frac{1}{10},\frac{9}{10}\right).
    \end{equation*}
    Then, we have
    \begin{align*}
        y_1&<1<y_2,\\
        y_3&<2<y_4,\\
        y_4&<3<y_5,\\
        y_5&<4<y_6,\\
        y_8&<5<y_9.
    \end{align*}
    and $s=\lceil y_9\rceil-1=5$. Thus, we have $\SC(x)=\{2,4,5,6,9\}$ and $(I,I')=(\{1,4,5,6,8\}),\{2,3,7\})$. This is the same saturated pair that appears in Examples \ref{eg:tree_path} and \ref{eg:saturated_path}. In particular, the columns $2,4,5,6,9$ are the special ones along the path $P_{\SW}$.
\end{example}

For $q=1,\ldots,n$, write $x'_q=1-x_q$ and $y'_q=x'_1+\cdots+x'_q=q-y_q$. We will also need the following ``dual'' inequalities to \eqref{eq:I'_inequalities}.

\begin{lemma}\label{lem:dual_ineqs}
    For all integers $m=1,\ldots,s'$, we have
    \begin{equation}\label{eq:I_inequalities}
       y'_{n-\lambda'_{m}}< m\le y'_{n+1-\lambda'_{m}}.
    \end{equation}
\end{lemma}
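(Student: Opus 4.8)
The plan is to recast Definition~\ref{def:I_associated_to_x} in terms of the integer sequence $n_q:=\lfloor y_q\rfloor$ and its dual, thereby exhibiting the column sets underlying $I$ and $I'$ as complementary ``jump sets.'' Because each $\cP(\Lambda_I)$ is closed and there are only finitely many of them, the containment in Proposition~\ref{prop:polytope_union} needs only be checked on a dense set of $x$, so I would prove (and apply) the lemma under the genericity hypothesis that every $x_q$ lies in $(0,1)$ and no partial sum $y_1,\dots,y_n$ is an integer. (This reduction is genuinely needed: for $x=(0,0,\tfrac12)$ and $n=3$ one has $I=\emptyset$, $I'=\{1,2\}$, and \eqref{eq:I_inequalities} fails on the nose for $m=1$.) Under this hypothesis all the weak inequalities around Definition~\ref{def:I_associated_to_x} become strict, $\lceil y_q\rceil=\lfloor y_q\rfloor+1$, and the dual partial sums $y'_q=q-y_q$ are again never integers, with $\lfloor y'_q\rfloor=q-1-n_q$ for $q\ge 1$.

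First I would unwind Definition~\ref{def:I_associated_to_x}. Writing $q:=n+1-\lambda$ (an order-reversing bijection $[n-1]\to[2,n]$), the defining condition \eqref{eq:I'_inequalities} for $\lambda=\lambda_\ell$ reads $y_{q-1}\le\ell<y_q$; using $x_1>0$ and $y_n\notin\bZ$ one checks that the integer $\ell$ so produced automatically lies in $[1,s]$, so that $n+1-q\in I$ exactly when $[y_{q-1},y_q)$ contains an integer, i.e.\ exactly when $n_q=n_{q-1}+1$, in which case the corresponding level is $\ell=n_q$. In particular $|I|=n_n-n_1=s$.

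The heart of the matter is the identity $n'_q:=\lfloor y'_q\rfloor=q-1-n_q$, which gives $n'_q-n'_{q-1}=1-(n_q-n_{q-1})\in\{0,1\}$ for $q\in[2,n]$. Hence $n'$ ``jumps'' at $q$ precisely when $n$ does not, i.e.\ precisely when $n+1-q\in I'$. Since $n'_1=0$ and $n'_n=n-1-n_n=s'$, the sequence $n'$ climbs from $0$ to $s'$ by unit steps taken exactly at the $s'$ columns of the form $n+1-\lambda'_m$. Reading these from the left, the $m$-th such column is $\widetilde q_m=n+1-\lambda'_m$, and there $n'_{\widetilde q_m}=m$, $n'_{\widetilde q_m-1}=m-1$; unpacking the floors yields $y'_{\widetilde q_m-1}<m<y'_{\widetilde q_m}$, which is \eqref{eq:I_inequalities} (indeed with strict inequality on the right as well).

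The only real difficulty is bookkeeping: tracking the two opposite half-open conventions ($[y_{q-1},y_q)$ for the primal crossings versus $(y'_{q-1},y'_q]$ for the dual ones), confirming that the levels produced by crossings stay in the ranges $[1,s]$ and $[1,s']$, and pinning down the genericity reduction above. Once the identity $n'_q=q-1-n_q$ is established, the rest is a routine translation.
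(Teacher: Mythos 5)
Your proposal is correct, and the counterexample you flag is genuine: for $x=(0,0,\tfrac12)$ with $n=3$ one has $s=\max(0,\lceil y_3\rceil-1)=0$, hence $I=\emptyset$, $I'=\{1,2\}$, $\lambda'_1=2$, and $y'_{n-\lambda'_1}=y'_1=1-y_1=1$, so the strict inequality $y'_1<1$ in \eqref{eq:I_inequalities} fails. The paper's own proof has exactly the corresponding gap: it invokes $\ell<y_{(n+1)-\lambda_\ell}$ from \eqref{eq:I'_inequalities} with $\ell=n-\lambda'_m-m$, which presupposes $\ell\ge 1$; when $\ell=0$ (i.e.\ $[2,n+1-\lambda'_m]$ consists entirely of dual special columns, so $n-\lambda'_m=m$) the argument only yields $y'_{n-\lambda'_m}=m-y_m\le m$, with equality precisely when $x_1=\cdots=x_m=0$. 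Your repair — prove the lemma only for $x$ with every $x_q\in(0,1)$ and no $y_q$ integral, then deduce Proposition \ref{prop:polytope_union} from density, since the finitely many $\cP(\Lambda_I)$ are closed — is legitimate, because only the union statement is used downstream. A more surgical fix is also available: Proposition \ref{prop:S_ineq_negative} only needs $\overline{x}(\widetilde{S}_i)=y'_{n-\lambda'_{\rho_{i+1}}}-y'_{n+1-\lambda'_{\rho_i}}\le\rho_{i+1}-\rho_i$, so weakening the left-hand inequality of \eqref{eq:I_inequalities} to $y'_{n-\lambda'_m}\le m$ (which the $\ell=0$ case does deliver) suffices without touching the generality of $x$.

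Methodologically your route differs from the paper's. The paper counts special and dual special columns in the initial segment $[1,n+1-\lambda'_m]$ via the disjoint-union identity \eqref{eq:disjoint_union} and then chains the inequalities of Definition \ref{def:I_associated_to_x} through the monotonicity of $y$. You instead establish the floor identity $\lfloor y'_q\rfloor=q-1-\lfloor y_q\rfloor$ (valid when $y_q\notin\bZ$) and read off both halves of \eqref{eq:I_inequalities} at once from the complementarity of the jump sets of $\lfloor y_\bullet\rfloor$ and $\lfloor y'_\bullet\rfloor$, using $n'_1=0$ and $n'_n=s'$ to pin down the $m$-th jump at column $n+1-\lambda'_m$. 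Your version makes the duality between $I$ and $I'$ transparent and localizes all the boundary pathology in one genericity hypothesis; the paper's counting argument applies to every $x$ but, as you observed, needs the $\ell=0$ (and, on the right-hand side, the $\ell=s$) boundary cases written out explicitly.
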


\begin{proof}
Note that
\begin{equation}\label{eq:disjoint_union}
    [1,n+1-\lambda'_m] =\{1\}\cup \{n+1-\lambda'_1,\ldots,n+1-\lambda'_m\}\cup \{n+1-\lambda_1,\ldots,n+1-\lambda_\ell\},
\end{equation}
where 
\begin{equation*}
    \ell=n-\lambda'_m-m.
\end{equation*}
Moreover, we have $n-\lambda'_m\ge (n+1)-\lambda_\ell$. Therefore, we have
\begin{equation*}
    y'_{n-\lambda'_m}=(n-\lambda'_m)-y_{n-\lambda'_m}\le (n-\lambda'_m)-y_{(n+1)-\lambda_\ell}<(n-\lambda'_m)-\ell=m.
\end{equation*}
Similarly, we have $n+1-\lambda'_m < n+1-\lambda_{\ell+1}$ because $|[1,n+1-\lambda'_m] \cap \SC(x)| = \ell$. Then, $n+1-\lambda'_m \le n-\lambda_{\ell+1}$, and we have
\begin{equation*}
    y'_{n+1-\lambda'_m} = (n+1-\lambda'_m)-y_{n+1-\lambda'_m} \ge (n+1-\lambda'_m)-y_{n-\lambda_{\ell+1}} \ge (n+1-\lambda'_m)-(\ell+1) = m. 
\end{equation*}
\end{proof}

\begin{example}\label{eg:dual_ineqs}
    Continuing Example \ref{eg:find_I}, we have $s'=3$ and 
    \begin{align*}
        y'_2&<1<y'_3,\\
        y'_6&<2<y'_7,\\
        y'_7&<3<y'_8.
    \end{align*}
    The indices $3,7,8$ appearing on the right hand sides correspond to the special columns $\bar3,\bar7,\bar8$ of $P_{\NE}$ in \ref{eg:saturated_path}.
\end{example}

\subsection{Rank inequalities}\label{sec:rank_ineqs}

To prove Proposition \ref{prop:polytope_union}, it suffices to prove:

\begin{proposition}\label{prop:x_in_P(I)}
    Let $x\in[0,1]^n$ be any point, and let $I\subset [n-1]$ be as in Definition \ref{def:I_associated_to_x}. Let $\Lambda_I\in\Sigma_{I,I^c}$ be a nice subspace. Then, for any $S\in\Adm_n$, we have
    \begin{equation*}
        x(S)\le g_{\Lambda_I}(S).
    \end{equation*}
    That is, we have $x\in\cP(\Lambda_I)$.
\end{proposition}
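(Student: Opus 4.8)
\emph{Reformulation.} Fix $x\in[0,1]^n$ and the associated subset $I=\{\lambda_1>\cdots>\lambda_s\}\subset[n-1]$ of Definition \ref{def:I_associated_to_x}, with complement $I^c=\{\lambda'_1>\cdots>\lambda'_{s'}\}$. Write $S=S_+\sqcup\overline{S_-}$ with $S_+=S\cap[n]$ and $S_-=\{q\in[n]:\barq\in S\}$; these are disjoint by admissibility, and since $0\notin\wtn$ the index $0$ never occurs in $S$. Setting $x'_q=1-x_q$ and using $|S\cap\overline{[n]}|=|S_-|$, the inequality $x(S)\le g_{\Lambda_I}(S)=\rk_{\Lambda_I}(S)-|S_-|$ is equivalent to
\begin{equation*}
    \sum_{q\in S_+}x_q+\sum_{q\in S_-}x'_q\ \le\ \rk_{\Lambda_I}(S).
\end{equation*}
So the problem is a lower bound on $\rk_{\Lambda_I}(S)$; the left-hand side is not an integer in general, so the precise combinatorics of the point $x$ must be exploited.

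\emph{The rank of a nice $\Lambda_I$.} Since $(I,I^c)$ is saturated, a nice $\Lambda_I$ is the row span of a matrix $A\in\cM_{I,I^c}$ all of whose $*$- and $+$-entries are non-zero (Proposition \ref{prop:solve_for_*_saturated}), and by \S\ref{sec:M_II'_structure} the non-zero entries of $A$ are exactly those lying on the lattice path $P_{I,I^c}$: row $p$ is supported on a consecutive block of columns ending at its pivot column $c_p$ (the unique $1$-entry of the row, to the right of which the row vanishes), consecutive rows overlap in exactly one column, and $c_2,\ldots,c_n$ are precisely the special columns $\{\,n+1-\lambda_h\,\}_h\cup\overline{\{\,n+1-\lambda'_m\,\}_m}$, with $c_1=\bar1$. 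The basic lower bound I would use is that, for any set of rows $P_1\subseteq[n]$ with $\{c_p:p\in P_1\}\subseteq S$ and any further set $P_2\subseteq[n]\setminus P_1$ each of whose members has an interior support column belonging to $S$, the submatrix of $A_S$ on the rows $P_1\cup P_2$ and the columns consisting of the $c_p$ ($p\in P_1$) together with one such interior column for each $p\in P_2$ is nonsingular: expanding along the interior columns (each non-zero in a single row) reduces to a triangular minor with $1$'s on the diagonal. Hence $\rk_{\Lambda_I}(S)\ge|P_1|+|P_2|$, and optimizing over such $(P_1,P_2)$ gives a bound which one shows equals the term rank of the support pattern of $A_S$ (equivalently, the rank of $S$ in the lattice-path matroid of $P_{I,I^c}$); the only point requiring a genericity input from niceness is that a maximum matching of the support pattern can be normalized so that each pivot column used is matched to its own row.

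\emph{The combinatorial estimate.} It now suffices to exhibit a pair $(P_1,P_2)$ as above with $|P_1|+|P_2|\ge\sum_{q\in S_+}x_q+\sum_{q\in S_-}x'_q$. Recall the partial sums $y_q=x_1+\cdots+x_q$ and $y'_q=q-y_q$. The defining relations \eqref{eq:I'_inequalities}, $y_{n-\lambda_\ell}\le\ell<y_{n+1-\lambda_\ell}$, say exactly that the special (hence pivot) column $n+1-\lambda_\ell$ is the first index at which $y_q$ overtakes the integer $\ell$, and dually by Lemma \ref{lem:dual_ineqs} the column $\overline{n+1-\lambda'_m}$ marks where $y'_q$ overtakes $m$. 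I would build the required configuration by scanning the columns of $S$ in column-order and making a greedy assignment of each column of $S$ to the largest still-free row in its one- or two-element support, saving low-index rows for columns further to the right; the placement of the special columns from \eqref{eq:I'_inequalities}--\eqref{eq:I_inequalities} guarantees that a free row becomes available precisely at each integer level crossed by $y_q$ ($q\in S_+$) or $y'_q$ ($q\in S_-$), so the number of rows used is at least the total number of such crossings. Summing the per-column increments $x_q$ (resp.\ $x'_q$) against the corresponding level-crossings telescopes to the desired bound.

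\emph{Main obstacle.} The heart of the argument is the combinatorial estimate: one must show that the greedy matching, run against the columns of $S$, picks up at least $\sum_{q\in S_+}x_q+\sum_{q\in S_-}x'_q$ rows, and this is where the asymmetry of $P_{I,I^c}$ about the central column $0$ makes the bookkeeping delicate — the contributions of $S_+$ (governed by $I$ via \eqref{eq:I'_inequalities}) and of $S_-$ (governed by $I^c$ via \eqref{eq:I_inequalities}) interact at the columns of $S$ lying near the middle, and the borderline cases where some $y_q$ or $y'_q$ is itself an integer, so that one of the defining inequalities becomes an equality, need separate attention. The identification of $\rk_{\Lambda_I}(S)$ with the term rank is comparatively routine given the consecutive-support structure.
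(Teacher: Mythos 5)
Your reformulation of the inequality as $\sum_{q\in S_+}x_q+\sum_{q\in S_-}x'_q\le\rk_{\Lambda_I}(S)$ is correct, and your description of the lattice-path structure of a nice $\Lambda_I$ (consecutive supports, overlaps in single special columns, nonsingularity of the pivot-plus-interior-column minors) is accurate and is essentially the same rank information the paper uses. But the proof has a genuine gap: the entire ``combinatorial estimate'' is only described, not carried out. You say you ``would'' run a greedy assignment of columns of $S$ to rows and that the level-crossing bookkeeping ``telescopes to the desired bound,'' and then in your final paragraph you explicitly flag as unresolved exactly the two places where this can fail --- the interaction of the $S_+$ and $S_-$ contributions at rows whose support straddles the central column, and the borderline cases where some $y_q$ or $y'_q$ is an integer. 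Those are not routine bookkeeping; they are the content of the proposition. A row near the middle can be matched to a column of $S_+$ or of $S_-$ but not both, so without an argument that the two halves do not compete for rows, the greedy count can come up short of the claimed sum.

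The paper resolves precisely these two difficulties. First, Lemma \ref{lem:split_rank} proves that for a nice $\Lambda_I$ one has $\rk_{\Lambda_I}(S)=\rk_{\Lambda_I}(S\cap[n])+\rk_{\Lambda_I}(S\cap\overline{[n]})$; its proof is not formal --- one must rule out the vector $h_p$ spanning the central column lying in both column spans, which forces $S$ to consist exactly of the special columns and then yields a contradiction. This decouples the two halves and eliminates the interference you worry about. Second, on each half the paper decomposes $S$ into blocks $S_i=S\cap\widetilde S_i$ determined by the special columns \emph{not} in $S$; on each block either $S_i$ consists only of special columns, in which case $x(S_i)\le|S_i|=\rk_{\Lambda_I}(S_i)$ is trivial, or $\rk_{\Lambda_I}(S_i)=\rk_{\Lambda_I}(\widetilde S_i)=\rho_{i+1}-\rho_i$ and the bound $x(\widetilde S_i)=y_{n-\lambda_{\rho_{i+1}}}-y_{n+1-\lambda_{\rho_i}}\le\rho_{i+1}-\rho_i$ is read off directly from \eqref{eq:I'_inequalities} (resp.\ Lemma \ref{lem:dual_ineqs} on the barred side). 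This block decomposition replaces your greedy scan and handles the integrality borderline cases automatically, because the inequalities \eqref{eq:I'_inequalities} and \eqref{eq:I_inequalities} are stated with the correct mixture of strict and weak inequalities. To complete your argument you would need to supply statements and proofs playing the role of both of these steps.
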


\begin{example}\label{eg:polytope_ineq}
    Continuing the example of Example \ref{eg:find_I}, let $S=\{1,2,\bar4,\bar5,\bar6,8,9\}$. Then, for this admissible set $S$, Proposition \ref{prop:x_in_P(I)} reads
    \begin{equation}\label{eq:rank_ineq_eg}
        (x_1+x_2+x_8+x_9)-(x_4+x_5+x_6)\le \rk(A_S)-3,
    \end{equation}
    where $A=\ssp^{-1}(\Lambda_I)\in\cM_{I,I^c}$, see Example \ref{eg:saturated_path} for the shape of $A$.

    We have by inspection that
    \begin{equation*}
        \rk(A_S)=\rk(A_{\{1,2\}})+\rk(A_{\{8,9\}})+\rk(A_{\{\bar4,\bar5,\bar6\}})=2+2+1.
    \end{equation*}
    It is immediate that $x_1+x_2,x_8+x_9\le 2$. We also have
    \begin{equation*}
        3-(x_4+x_5+x_6)=y'_6-y'_3\le 2-1=1,
    \end{equation*}
    by Example \ref{eg:dual_ineqs}. Adding these three inequalities yields \eqref{eq:rank_ineq_eg}.
\end{example}

In general, we will bound $g_{\Lambda_I}(S)=\rk_{\Lambda_I}(S) - |S \cap \overline{[n]}|$ using the same strategy as in Example \ref{eg:polytope_ineq}.

\begin{lemma}\label{lem:split_rank}
    Let $I \subset [n-1]$ and $\Lambda_I \in \Sigma_{I,I^c}$ be as in Proposition \ref{prop:x_in_P(I)}. Then,
    \begin{equation*}
        \rk_{\Lambda_I}(S) = \rk_{\Lambda_I}(S \cap [n])+ \rk_{\Lambda_I}(S \cap \overline{[n]})
    \end{equation*}
    for all $S \in \Adm_n$.
\end{lemma}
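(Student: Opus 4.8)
The plan is as follows. Write $A=\ssp^{-1}(\Lambda_I)\in\cM_{I,I^c}$, so that $\rk_{\Lambda_I}(S)=\rk(A_S)$ for every $S$. Since $0\notin S$ we have $S=(S\cap[n])\sqcup(S\cap\overline{[n]})$ and $A_S$ is the column concatenation of $A_{S\cap[n]}$ and $A_{S\cap\overline{[n]}}$; this already gives $\rk(A_S)\le \rk(A_{S\cap[n]})+\rk(A_{S\cap\overline{[n]}})$. The substance is the reverse inequality, equivalently that the column spans of $A_{S\cap[n]}$ and of $A_{S\cap\overline{[n]}}$ intersect trivially inside $\bC^n$ (the ambient space of the columns of $A$), and I would establish this by a block analysis of $A$.

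First I would record the shape of $A$ in the saturated case, following \S\ref{sec:M_II'_structure}: every row of $A$ is supported on a single contiguous block of columns (in the order $1,\dots,n,0,\barn,\dots,\barone$), all entries on that block being nonzero by Proposition \ref{prop:solve_for_*_saturated} and niceness, and by the proof of Lemma \ref{lem:AII'} the $1$-entries — the rightmost entries of the rows — sit in strictly-left-moving columns as the row index increases. Writing $s'=|I^c|$ and $p_0=s'+1$, this structure forces rows $1,\dots,p_0-1$ to be supported within $\overline{[n]}$, rows $p_0+1,\dots,n$ within $[n]$, and the single ``interface'' row $v_{p_0}$ to be the only row meeting both halves (it also carries the lone column-$0$ entry). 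Letting $u_1,\dots,u_n$ be the standard basis of $\bC^n$ indexed by the rows of $A$, we get $\mathrm{colspan}(A_{S\cap[n]})\subseteq\langle u_{p_0},\dots,u_n\rangle$ and $\mathrm{colspan}(A_{S\cap\overline{[n]}})\subseteq\langle u_1,\dots,u_{p_0}\rangle$, two subspaces meeting in $\langle u_{p_0}\rangle$. So it remains to show $u_{p_0}$ cannot lie in both column spans.

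Here I would use that $1$ lies in $I$ or in $I^c$; after possibly applying the isometric involution $\overline{\phantom{x}}$ (which interchanges $\Sigma_{I,I^c}$ and $\Sigma_{I^c,I}$, preserves niceness, replaces $S$ by $\overline S$, and swaps the roles of $[n]$ and $\overline{[n]}$), I may assume $1\in I^c$, so $\min I\ge 2$. Then the interface row $v_{p_0}$ has its only $\overline{[n]}$-entry at its pivot $\barn$, while its $[n]$-entries fill the block $[\,n+1-\min I,\,n\,]$. Restricting $A$ to $\overline{[n]}$ on rows $1,\dots,p_0$ (resp.\ to $[n]$ on rows $p_0,\dots,n$) yields a staircase matrix with interval column-supports; tracing through Gaussian elimination, $u_{p_0}\in\mathrm{colspan}(A_{S\cap\overline{[n]}})$ forces $\barn\in S$ together with a ``chain'' of the distinguished columns $\overline{n+1-\lambda'_1},\dots,\overline{n+1-\lambda'_{s'}}$ (writing $I^c=\{\lambda'_1>\dots>\lambda'_{s'}\}$), and possibly a pendant first-row column, needed to kill the tail of $A_{\barn}$ along the path $p_0-s'-(s'-1)-\cdots-1$; symmetrically $u_{p_0}\in\mathrm{colspan}(A_{S\cap[n]})$ forces a chain of consecutive $[n]$-columns descending from column $n$ down its own staircase.

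The contradiction then comes from the observation that these two chains run through ``matched'' columns: the two staircases share the same row indexing, and the special columns of $P_{\SW}$ and of $P_{\NE}$ are complementary, so any realization of the $[n]$-side chain puts some $q$ into $S$ whose partner $\bar q$ is simultaneously forced into $S$ by the $\overline{[n]}$-side chain (or vice versa), contradicting admissibility of $S$. Making this matching precise and uniform — in particular treating the boundary cases $I=\emptyset$ or $I^c=\{1\}$, small $|I|$ or $|I^c|$, and non-consecutive elements of $I$ and $I^c$ — is the step I expect to require the most care; the rest is routine bookkeeping with the explicit matrix $A$. Granting it, $\mathrm{colspan}(A_{S\cap[n]})\cap\mathrm{colspan}(A_{S\cap\overline{[n]}})=0$, and hence $\rk_{\Lambda_I}(S)=\rk_{\Lambda_I}(S\cap[n])+\rk_{\Lambda_I}(S\cap\overline{[n]})$, as claimed.
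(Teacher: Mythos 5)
Your setup is correct and coincides with the paper's: writing $A=\ssp^{-1}(\Lambda_I)$, the block structure of $A$ (rows $1,\dots,p_0-1$ supported in $\overline{[n]}$, rows $p_0+1,\dots,n$ in $[n]$, with the single interface row $p_0=|I^c|+1$) reduces the lemma to showing that the basis vector $u_{p_0}$ cannot lie in both $\langle A_{S\cap[n]}\rangle$ and $\langle A_{S\cap\overline{[n]}}\rangle$. That much is the easy half. The hard half is precisely the step you ``grant,'' and leaving it granted is a genuine gap: it is the entire content of the lemma. Moreover, your description of what each side forces is not accurate as stated. Neither chain is forced to run through all the distinguished columns down to a ``pendant first-row column'': each special column contributes a vector $\gamma u_{p'}+\gamma' u_{p'+1}$, but every \emph{non-special} column in a row's support contributes a vector proportional to a single $u_{p'}$, so either chain may terminate early at any such single-entry column (e.g.\ for $n=4$, $I=\{3\}$, the $[n]$-side realizations are $\{4\}$, $\{3\}$, or $\{2,1\}$ --- three quite different sets). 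The conflict you need is therefore not between two fixed chains but between \emph{every} realization on one side and \emph{every} realization on the other, and it is exactly this case analysis (which realization terminates where, and which partners $q,\barq$ are thereby excluded) that requires the argument you have deferred.

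For comparison, the paper closes this step differently and more economically: it runs a single downward induction on $q=n,n-1,\dots,2$, showing at each stage that if $u_{p_0}$ lies in both spans then the special one of the pair $\{q,\barq\}$ must belong to $S$ (and hence its partner does not), so that $S\cap([2,n]\cup\overline{[2,n]})$ is forced to be \emph{exactly} the set of special columns. Note that this set is admissible, so the final contradiction is \emph{not} an admissibility violation as you propose; rather, with only special columns available (and at most one of $1,\bar1$ in $S$), the surviving columns on one of the two sides form a chain of vectors $\gamma_{p'}u_{p'}+\gamma_{p'+1}u_{p'+1}$ with all coefficients non-zero, and a triangularity argument shows such a chain cannot span $u_{p_0}$. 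Your admissibility-based mechanism can likely be made to work (it does in small examples), but it requires classifying all minimal realizations on each side and verifying pairwise conflicts, including the boundary cases you list; until that is written out, the proof is incomplete.
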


\begin{proof}
Let $A=\ssp^{-1}(\Lambda_I)\in\cM_{I,I^c}$ (see \S\ref{sec:matrices}) be the unique matrix in $\cM_{I,I^c}$ whose row span is equal to $\Lambda_I$. Then, we regard $\rk_{\Lambda_I}(S)$ as the dimension of the column span $\langle A_S\rangle$ of the sub-matrix $A_S$ comprised of columns indexed by $S$.

Let $h_i\in\bC^n$, for $i=1,\ldots,n$, be the column vector whose $i$-th coordinate is 1, and all of whose other coordinates are 0. The 0-th column of $A$ is a non-zero multiple of $h_p$, where $p=|I^c|+1$. Furthermore, we have
\begin{align*}
    \langle A_{S\cap[n]} \rangle &\subset \langle h_p,\ldots,h_n\rangle,\\
    \langle A_{S\cap\overline{[n]}} \rangle &\subset \langle h_1,\ldots,h_p\rangle.
\end{align*}
Therefore, we have
    \begin{equation*}
        \rk_{\Lambda_I}(S) = \rk_{\Lambda_I}(S \cap [n])+ \rk_{\Lambda_I}(S \cap \overline{[n]})
    \end{equation*}
as long as $h_p$ does not lie in both $\langle A_{S\cap[n]} \rangle$ and $\langle A_{S\cap\overline{[n]}} \rangle$.

Suppose for sake of contradiction that $h_p$ lies in the column spans of $A_{S\cap[n]}$ and $A_{S\cap\overline{[n]}}$. We claim first that $S\cap([2,n]\cup\overline{[2,n]})$ is precisely the set of special columns of $P_{\SW}$ and $P_{\NE}$. Without loss of generality, suppose that column $n$ is special, or equivalently that $1\in I$. This means that 
\begin{equation*}
    \langle A_{[1,n-1]}\rangle \subset \langle h_{p+1}, \ldots, h_n \rangle.
\end{equation*} 
Then, S must include $n$ in order for the column span $\langle A_{S \cap [n]}\rangle$ to possibly contain $h_{p}$. On the other hand, $\barn \notin S$, because $S$ is admissible. 

Next, consider the columns $n-1,\overline{n-1}$, one of which must be special. The column span $\langle A_{S \cap [n]}\rangle$ contains the $n$-th column vector of $A$, which is a linear combination
\begin{equation*}
    \gamma_ph_p+\gamma_{p+1}h_{p+1},
\end{equation*}
where \emph{both} coefficients $\gamma_p,\gamma_{p+1}$ are non-zero, by Proposition \ref{prop:solve_for_*_saturated}. If $n-1$ is special and $n-1\notin S$, then the remaining vectors spanning $\langle A_{S \cap [n]}\rangle$ lie in 
\begin{equation*}
    \langle h_{p+2},\ldots,h_n\rangle,
\end{equation*}
so $h_{p}$ cannot lie in the span $\langle A_{S \cap [n]} \rangle$. Alternatively, if column $\overline{n-1}$ is special, then arguing as before shows that we need $\overline{n-1} \in S$ in order for $h_p\in \langle A_{S \cap \overline{[n]}} \rangle$. Iterating this argument gives the claim in general.

Finally, note that $S$ can contain at most one of $1,\bar1$. Without loss of generality, suppose that $1\notin S$. Then, a basis of $\langle A_{S \cap [n]} \rangle$ is given by the vectors in the special columns among $S\cap [n]$, each of which is a linear combination with non-zero coefficients of the vectors $h_{p'},h_{p'+1}$, where $p'=p,p+1,\ldots,n-1$. This column span cannot contain $h_p$, so we have reached a contradiction.
\end{proof}

\begin{corollary}\label{cor:split_g}
Adopting the setup of Proposition \ref{prop:x_in_P(I)}, we have
\begin{equation*}
    g_{\Lambda_I}(S) = g_{\Lambda_I}(S \cap [n])+ g_{\Lambda_I}(S \cap \overline{[n]}).
\end{equation*}
\end{corollary}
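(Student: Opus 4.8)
The plan is to reduce immediately to Lemma \ref{lem:split_rank}, since the only content beyond that lemma is bookkeeping with the term $|S\cap\overline{[n]}|$ appearing in the definition of $g$. First I would observe that $S\cap[n]$ and $S\cap\overline{[n]}$ are themselves admissible, being subsets of the admissible set $S$ (Definition \ref{def:admissible}), so that $g_{\Lambda_I}$ is defined on them. Then I would recall that $g_{\Lambda_I}(T)=\rk_{\Lambda_I}(T)-|T\cap\overline{[n]}|$ for any admissible $T$, and apply this with $T=S\cap[n]$ and with $T=S\cap\overline{[n]}$.

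For $T=S\cap[n]$ we have $(S\cap[n])\cap\overline{[n]}=\emptyset$, so $g_{\Lambda_I}(S\cap[n])=\rk_{\Lambda_I}(S\cap[n])$. For $T=S\cap\overline{[n]}$ we have $S\cap\overline{[n]}\subset\overline{[n]}$, so $g_{\Lambda_I}(S\cap\overline{[n]})=\rk_{\Lambda_I}(S\cap\overline{[n]})-|S\cap\overline{[n]}|$. Adding these two identities and substituting the rank decomposition of Lemma \ref{lem:split_rank}, the sum of the two right-hand sides becomes $\rk_{\Lambda_I}(S\cap[n])+\rk_{\Lambda_I}(S\cap\overline{[n]})-|S\cap\overline{[n]}|=\rk_{\Lambda_I}(S)-|S\cap\overline{[n]}|=g_{\Lambda_I}(S)$, which is exactly the claim.

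There is no real obstacle here: the entire difficulty is already absorbed into Lemma \ref{lem:split_rank}, whose proof uses the niceness hypothesis (through Proposition \ref{prop:solve_for_*_saturated}) to control which columns of $A=\ssp^{-1}(\Lambda_I)$ are special and hence to force $h_p$ out of one of the two column spans. Once that rank additivity is available, Corollary \ref{cor:split_g} is a one-line consequence of the defining formula for $g$, with the only subtlety being the harmless remark that the correction term $|{\cdot}\cap\overline{[n]}|$ splits because $[n]$ and $\overline{[n]}$ are disjoint.
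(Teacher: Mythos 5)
Your proof is correct and matches the paper's (implicit) argument: the corollary is stated in the paper without a separate proof, precisely because it follows from Lemma \ref{lem:split_rank} by exactly the bookkeeping you carry out, namely that the correction term $|{\cdot}\cap\overline{[n]}|$ vanishes on $S\cap[n]$ and equals $|S\cap\overline{[n]}|$ on $S\cap\overline{[n]}$. Your preliminary remark that $S\cap[n]$ and $S\cap\overline{[n]}$ are admissible (as subsets of the admissible set $S$) is also correct and worth including.
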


In particular, because 
\begin{equation*}
x(S) = x(S \cap [n]) + x(S \cap \overline{[n]}),
\end{equation*}
we have reduced Proposition \ref{prop:x_in_P(I)} to the case where $S\subset[n]$ or $S\subset\overline{[n]}$. 

\begin{proposition}\label{prop:S_ineq_positive}
    Proposition \ref{prop:x_in_P(I)} holds when $S\subset[n]$.
\end{proposition}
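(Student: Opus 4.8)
Since $S\subseteq[n]$ is automatically admissible and $S\cap\overline{[n]}=\emptyset$, the desired inequality $x(S)\le g_{\Lambda_I}(S)$ becomes $\sum_{q\in S}x_q\le \rk_{\Lambda_I}(S)$, where $\rk_{\Lambda_I}(S)$ is the rank of the submatrix $A_S$ (columns indexed by $S$, each a vector in $\bC^n$) of the matrix $A:=\ssp^{-1}(\Lambda_I)\in\cM_{I,I^c}$. The plan is to read off $\rk(A_S)$ from the ``banded'' shape of the first $n$ columns of $A$, and then to verify the resulting purely combinatorial inequality by a telescoping estimate on the partial sums $y_q=x_1+\cdots+x_q$.

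The first step is to pin down the shape of $A$ on columns $1,\dots,n$. Write $\SC(x)=\{c_1<\cdots<c_s\}$ with $c_i=n+1-\lambda_i$ as in Definition~\ref{def:I_associated_to_x}, and let $h_r$ denote the $r$-th standard basis vector of $\bC^n$. From the description of $M_{I,I^c}$ in \S\ref{sec:M_II'_structure}, together with the elementary fact that $\mu'_{n-i}=n-\lambda_i$ (equivalently $\mu'_{n-i}+1=c_i$), the first $n$ columns of $A$ are supported only in rows $n,n-1,\dots,n-s$: for $i=0,\dots,s-1$ (with $c_0:=1$) row $n-i$ is supported exactly on columns $[c_i,c_{i+1}]$, with its unique $1$-entry in column $c_{i+1}$, while the ``middle'' row $n-s$ is supported, among columns $[n]$, exactly on $[c_s,n]$, all of whose entries there are $*$-entries. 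By Proposition~\ref{prop:solve_for_*_saturated} every $*$-entry of $A$ is nonzero, so each special column $c_i$ has the form (nonzero scalar)$\cdot h_{n-i}+h_{n+1-i}$, while every non-special column $q\in[n]$ is a nonzero multiple of a single $h_{n-i}$ — the one whose band $[c_i,c_{i+1}]$ (or $[c_s,n]$, which occurs only when $1\in I'$) contains $q$.

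This lets one compute $\rk(A_S)$ graph-theoretically. On the vertex set $\{h_n,h_{n-1},\dots,h_{n-s}\}$, put an edge $\{h_{n-i},h_{n+1-i}\}$ for each special column $c_i\in S$, and call $h_{n-i}$ \emph{marked} when $S$ meets the block of non-special columns attached to row $n-i$. Because the edge coefficients are nonzero, the edges along a run $h_{n-a},\dots,h_{n-b}$ span a $(b-a)$-dimensional subspace meeting no coordinate axis, and adjoining any one $h_r$ from that run recovers all of $\langle h_{n-a},\dots,h_{n-b}\rangle$; hence $\rk(A_S)=\sum_C\big(|C|-[\,C\text{ has no marked vertex}\,]\big)$, summed over connected components $C$ of this edge graph. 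Then I would split $x(S)$ along the same components: a present edge $c_i$ contributes $x_{c_i}=y_{c_i}-y_{c_i-1}$, and the non-special columns of $S$ attached to $h_{n-i}$ contribute at most $\sum_q x_q$ over that block, a difference of two $y$'s. For a component $C$ spanning $h_{n-a},\dots,h_{n-b}$ with internal edges $c_{a+1},\dots,c_b$, the edge terms alone sum to at most $b-a=|C|-1$ (each $x_{c_i}\le1$); and if $C$ is marked, adding the block terms lets all interior $y$'s telescope, leaving $y_{c_{b+1}-1}-y_{c_a}$ (or, when $b=s$, the analogous $y_{c_s}-y_{c_a}$ if $1\in I$ and $y_n-y_{c_a}$ if $1\in I'$), which is $\le(b+1)-a=|C|$ by the inequalities $y_{c_i-1}\le i<y_{c_i}$ of \eqref{eq:I'_inequalities} (reading $y_{c_0}$ as $y_0=0$ when $a=0$, and using $y_{c_s}\le s+1$, $y_n\le s+1$ in the boundary cases). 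Summing the per-component bounds gives $x(S)\le\rk(A_S)=g_{\Lambda_I}(S)$.

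I expect the band-structure bookkeeping of the first step to be the delicate part: identifying precisely the support of each of the first $n$ columns of $A$, treating the mildly anomalous middle row $n-s$, and tracking the split into the cases $1\in I$ and $1\in I'$ (which determine whether column $n$ is special and whether $h_{n-s}$ can ever be marked). Once this is settled, the rank formula and the telescoping estimate are routine.
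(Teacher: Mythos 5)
Your proof is correct and follows essentially the same route as the paper's: both split $S$ at the special columns \emph{not} contained in $S$ (your connected components are exactly the paper's intervals $\widetilde{S}_i$), use additivity of rank over the resulting pieces, compute the rank of each piece according to whether it contains a non-special column, and bound $x$ on each piece either trivially via $x_q\le 1$ or by telescoping the partial sums $y_q$ against the defining inequalities \eqref{eq:I'_inequalities} of $I$. Your explicit band-structure description of the first $n$ columns of $A$ and the graph-theoretic packaging of the rank computation are a more detailed phrasing of what the paper gets from the non-vanishing of the $*$-entries in Proposition \ref{prop:solve_for_*_saturated}.
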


\begin{proof}
    Let $\rho_1<\cdots<\rho_r$ be the indices such that
\begin{equation*}
    n +1 -\lambda_{\rho_1},n +1 -\lambda_{\rho_2}, \ldots,n +1 -\lambda_{\rho_r} \in \SC(x)
\end{equation*}
are the special columns of $M_{I,I^c}$ in $[n]$ \emph{not} contained in $S$. It will be convenient also to set
\begin{equation*}
    \rho_0=0,\lambda_0=n+1\text{ and }\rho_{r+1}=s+1,\lambda_{s+1}=0.
\end{equation*}

Then, for every integer $i \in \{0, \ldots, r\}$, define the set
\begin{equation*}
    \widetilde{S}_i =[(n+1-\lambda_{\rho_{i}})+1, (n+1-\lambda_{\rho_{i+1}})-1].
\end{equation*}
By definition, the $\widetilde{S}_i$ are disjoint. We also have 
\begin{equation*}
    \bigcup_{i \in \{0, \ldots, r\}} \widetilde{S}_i = [n] \setminus \{n +1 -\lambda_{\rho_1}, n +1 -\lambda_{\rho_2}, \ldots, n +1 -\lambda_{\rho_r}\}.
\end{equation*}
Note that the set $\widetilde{S}_i$ can be empty for any $i \in \{1, \ldots, r\}$. On the other hand, $1$ is always an element of $\widetilde{S}_0$. Taking $A$ to be as in the proof of Lemma \ref{lem:split_rank} above, we observe that the subsets $\widetilde{S}_i$ satisfy the property that 
\begin{equation*}
    \langle A_{\widetilde{S}_i}\rangle \cap \langle A_{\widetilde{S}_j}\rangle = 0
\end{equation*}
for any $i \neq j$. This implies that 
\begin{equation*}
    \rk_{\Lambda_I}\left(\bigcup_{i \in \{0, \ldots, r\}} \widetilde{S}_i\right) = \rk_{\Lambda_I}(\widetilde{S}_0) + \ldots + \rk_{\Lambda_I}(\widetilde{S}_r).
\end{equation*}
Now, we can write
\begin{equation*}
    S = S_0 \cup S_1 \cup \ldots \cup S_r,
\end{equation*}
where $S_i=S\cap \widetilde{S}_i$, so that
\begin{equation*}
    \rk_{\Lambda_I}(S) = \rk_{\Lambda_I}(S_0) + \ldots + \rk_{\Lambda_I}(S_r).
\end{equation*}
It now suffices to show that $x(S_i)\le g_{\Lambda_I}(S_i)= \rk_{\Lambda_I}(S_i)$ for each $i$. 

By construction, $S_i$ contains all of the $\rho_{i+1}-\rho_i-1$ special columns in the interval $\widetilde{S}_i=[(n+1-\lambda_{\rho_{i}})+1, (n+1-\lambda_{\rho_{i+1}})-1]$. Then, we have
\begin{equation*}
    g_{\Lambda_I}(S_i) = \rk_{\Lambda_I} (S_i) = 
    \begin{cases}
        \rho_{i+1}-\rho_i-1 &\text{ if } S_i = \widetilde{S}_i\cap \SC(x),\\
        \rho_{i+1}-\rho_i&\text{ otherwise}
    \end{cases}
\end{equation*}
Indeed, by the non-vanishing of the $*$-entries in Proposition \ref{prop:solve_for_*_saturated}, the column vectors in the set of special columns $\SC(x)\cap  \widetilde{S}_i$ are linearly independent, and if $S_i$ contains at least one additional column vector in a non-special column, then $\langle A_{S_i}\rangle=\langle A_{\widetilde{S}_i}\rangle$. In the first case, because $x\in[0,1]^n$, we have
\begin{equation*}
    x(S_i)\le |S_i|= \rho_{i+1}-\rho_i-1=\rk_{\Lambda_I} (S_i),
\end{equation*}
so the needed inequality is immediate.

Suppose, from now on, that there is at least one element in $S_i$ that is not a special column. Then, we have $\rk_{\Lambda_I}(S_i) = \rk_{\Lambda_I}(\widetilde{S}_i) = \rho_{i+1}-\rho_i$ and $x(S_i) \le x(\widetilde{S}_i)$. Thus, it suffices to show that 
\begin{equation*}
    x(\widetilde{S}_i) \le \rho_{i+1}-\rho_i,
\end{equation*}
for $i=0,\ldots,r$. On the other hand, we have
\begin{align*}
    x(\widetilde{S}_i)=y_{n-\lambda_{\rho_{i+1}}}-y_{n+1-\lambda_{\rho_i}}\le \rho_{i+1}-\rho_i,
\end{align*}
by Definition \ref{def:I_associated_to_x}. This completes the proof.
\end{proof}

\begin{proposition}\label{prop:S_ineq_negative}
    Proposition \ref{prop:x_in_P(I)} holds when $S\subset\overline{[n]}$.
\end{proposition}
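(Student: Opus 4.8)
The plan is to mirror the proof of Proposition~\ref{prop:S_ineq_positive}, interchanging the roles of the lattice paths $P_{\SW}$ and $P_{\NE}$ of $M_{I,I^c}$ and using the dual inequalities of Lemma~\ref{lem:dual_ineqs} in place of those of Definition~\ref{def:I_associated_to_x}. First I would reduce the desired inequality to a lower bound on a rank. For $S\subset\overline{[n]}$, write $T=\{q\in[n]:\overline{q}\in S\}$ and $x'_q=1-x_q$, so that $x(S)=-\sum_{q\in T}x_q$ and $|S\cap\overline{[n]}|=|T|$; then $x(S)\le g_{\Lambda_I}(S)$ is equivalent to
\begin{equation*}
    x'(T):=\sum_{q\in T}x'_q\le \rk_{\Lambda_I}(S).
\end{equation*}
With $y'_q=x'_1+\cdots+x'_q$ as in \S\ref{sec:moment_polytopes_statement}, it thus suffices to bound $x'(T)$ above by $\rk_{\Lambda_I}(S)$. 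Let $A=\ssp^{-1}(\Lambda_I)\in\cM_{I,I^c}$, and recall that the special columns of $M_{I,I^c}$ lying in $\overline{[n]}$ are $\overline{n+1-\lambda'_1},\ldots,\overline{n+1-\lambda'_{s'}}$, where $I^c=\{\lambda'_1>\cdots>\lambda'_{s'}\}$; these are the columns in which $P_{\NE}$ takes an upward step, and by Proposition~\ref{prop:solve_for_*_saturated} all $+$- and $*$-entries of $A$ are nonzero.

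Next, exactly as in Proposition~\ref{prop:S_ineq_positive}, let $n+1-\lambda'_{\sigma_1}<\cdots<n+1-\lambda'_{\sigma_r}$ enumerate the integers $n+1-\lambda'_m\in[2,n]$ for which $\overline{n+1-\lambda'_m}\notin S$, set $\sigma_0=0$, $\lambda'_0=n+1$, $\sigma_{r+1}=s'+1$, $\lambda'_{s'+1}=0$, and define $\widetilde{T}_i=[(n+1-\lambda'_{\sigma_i})+1,\ (n+1-\lambda'_{\sigma_{i+1}})-1]\subset[n]$ for $i=0,\ldots,r$, and $T_i=T\cap\widetilde{T}_i$. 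As one sees by inspecting the staircase of $P_{\NE}$ (just as in the proof of Lemma~\ref{lem:split_rank}), each barred non-special column of $A$ is a nonzero multiple of a single $h_p$, while each barred special column is a combination, with nonzero coefficients, of two consecutive $h_p$; removing a special column between two consecutive $\widetilde T_i$ therefore makes the corresponding coordinate blocks disjoint, so $\rk_{\Lambda_I}(S)=\sum_{i=0}^{r}\rk_{\Lambda_I}(\overline{T_i})$. Moreover, by the same linear-algebra fact used in Proposition~\ref{prop:S_ineq_positive}, one has $\rk_{\Lambda_I}(\overline{T_i})=\sigma_{i+1}-\sigma_i-1$ when $T_i$ consists only of special columns, and $\rk_{\Lambda_I}(\overline{T_i})=\rk_{\Lambda_I}(\overline{\widetilde{T}_i})=\sigma_{i+1}-\sigma_i$ otherwise; the ``$+1$'' in the generic case reflects that column $0$ of $A$ is a nonzero multiple of $h_{s'+1}$ and sits at the right end of the last block $\widetilde T_r$.

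Finally, I would bound $x'(T)$ blockwise. If $T_i$ is contained in the set of special columns of $\widetilde T_i$, then $x'(T_i)\le|T_i|=\sigma_{i+1}-\sigma_i-1$, since $x'_q\in[0,1]$. Otherwise
\begin{equation*}
    x'(T_i)\le x'(\widetilde{T}_i)=y'_{n-\lambda'_{\sigma_{i+1}}}-y'_{n+1-\lambda'_{\sigma_i}}\le\sigma_{i+1}-\sigma_i,
\end{equation*}
where the last step uses $y'_{n-\lambda'_m}<m$ and $y'_{n+1-\lambda'_m}\ge m$ from Lemma~\ref{lem:dual_ineqs}, together with the boundary conventions $y'_0=0$ and $y'_n\le s'+1$. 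Summing over $i$ gives $x'(T)\le\sum_i\rk_{\Lambda_I}(\overline{T_i})=\rk_{\Lambda_I}(S)$, which is the claim.

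As in Proposition~\ref{prop:S_ineq_positive}, the main obstacle is the bookkeeping for the rank decomposition: one must verify carefully that deleting the special columns $\overline{n+1-\lambda'_{\sigma_i}}$ genuinely decouples the column spans of the $\overline{\widetilde T_i}$, and must pin down $\rk_{\Lambda_I}(\overline{\widetilde T_i})$ as $\sigma_{i+1}-\sigma_i$ (or one less). This is the exact analogue of the argument already carried out on the $[n]$-side, applied now to the right-hand portion of the path $P_{I,I^c}$; the only structural differences are that the relevant coordinate ranges run in the opposite direction and that column $0$ is absorbed into the final block. One could alternatively try to deduce the statement from Proposition~\ref{prop:S_ineq_positive} via the involution of $\OG(n,2n+1)$ swapping $e_q\leftrightarrow e_{\overline{q}}$, which carries $\Sigma_{I,I^c}$ to $\Sigma_{I^c,I}$ and $\cP(\Lambda)$ to $(1,\ldots,1)-\cP(\Lambda)$; but matching the subset associated to $x$ with the one associated to $(1,\ldots,1)-x$ requires care at points of $[0,1]^n$ where the inequalities of Lemma~\ref{lem:dual_ineqs} hold with equality, so the direct argument above seems preferable.
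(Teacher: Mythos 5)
Your argument is correct and is essentially the paper's own proof: the quantity you call $x'(T)$ is exactly the paper's $\overline{x}(S)=x(S)+|S|$, and you use the same block decomposition of $S$ along the special columns of $P_{\NE}$ not in $S$, the same additivity of rank across blocks, the same two-case rank count, and the same appeal to Lemma \ref{lem:dual_ineqs}. The only difference is that you spell out the rank bookkeeping and the boundary conventions that the paper compresses into ``entirely dual to Proposition \ref{prop:S_ineq_positive}.''
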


\begin{proof}
The argument is entirely dual to that of Proposition \ref{prop:S_ineq_positive}. Write 
\begin{equation*}
    \overline{n+1-\lambda'_{\rho_1}},\ldots,\overline{n+1-\lambda'_{\rho_r}},
\end{equation*}
where $\rho_1<\cdots<\rho_r$, for the special columns in $\overline{[n]}$ not contained in $S$, and write
\begin{equation*}
    S=S_0\cup S_1\cup\cdots\cup S_r,
\end{equation*}
where $S_i=\widetilde{S}_i\cap S=\overline{[(n+1-\lambda'_{\rho_{i}})+1,(n+1-\lambda'_{\rho_{i+1}})-1]}\cap S$.

The functions $\overline{x}(S):=x(S)+|S|$ and $\rk_{\Lambda_I}(S)=g_{\Lambda_I}(S)+|S|$ are both additive for the $S_i$, so it suffices to prove that
\begin{equation*}
\overline{x}(S)\le \rk_{\Lambda_I}(S)
\end{equation*}
for $S=S_i$. If $S_i$ only contains special columns, then
\begin{equation}\label{eq:x-bar-ineq}
\overline{x}(S_i)\le |S_i|=\rk_{\Lambda_I}(S_i),
\end{equation}
because $x_i\in[0,1]$, as needed. Otherwise, we have $\overline{x}(S_i)\le \overline{x}(\widetilde{S}_i)$ and $\rk_{\Lambda_I}(S_i)=\rk_{\Lambda_I}(\widetilde{S}_i)=\rho_{i+1}-\rho_i$, so it suffices to prove that
\begin{equation*}
\overline{x}(\widetilde{S}_i)\le \rho_{i+1}-\rho_i.
\end{equation*}
This, in turn, follows from Lemma \ref{lem:dual_ineqs}.
\end{proof}

Propositions \ref{prop:S_ineq_positive} and \ref{prop:S_ineq_positive} together imply Proposition \ref{prop:x_in_P(I)}, by Corollary \ref{cor:split_g}. Therefore, Proposition \ref{prop:polytope_union} is proven.

\subsection{Multiplicity 1}\label{sec:multiplicity_1}

\begin{proposition}\label{prop:multiplicity_1}
    Let $I\subset[n-1]$ be any subset. Let $\Xi$ (resp. $\Xi_I$) be the lattice generated by differences of vertices of $\cP(\Lambda)=[0,1]^n$ (resp. $\cP(\Lambda_I)$), see \S\ref{sec:prelim_degen}. Then, we have $\Xi=\Xi_I$.
\end{proposition}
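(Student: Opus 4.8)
The goal is to prove $\Xi_I=\mathbb{Z}^n=\Xi$. Since $\cP(\Lambda)=[0,1]^n$ has the $0$-$1$ vectors as vertices, $\Xi=\mathbb{Z}^n$; and since $\cP(\Lambda_I)\subseteq[0,1]^n$, its vertices are $0$-$1$ vectors as well, so $\Xi_I\subseteq\mathbb{Z}^n$. By Lemma \ref{lem:very_nice_saturated}(b) and the dimension statement recalled in \S\ref{sec:delta}, $\dim\cP(\Lambda_I)=\dim Z_{\Lambda_I}=n$, so $\Xi_I$ has full rank; hence $[\mathbb{Z}^n:\Xi_I]$ is finite and it suffices to show $f_q\in\Xi_I$ for every $q\in[n]$. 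Equivalently, since the vertices of $\cP(\Lambda_I)$ are exactly the $\chi(S)$ with $S\in\cD(\Lambda_I)$, it suffices to exhibit for each $q$ a pair $S,S'\in\cD(\Lambda_I)$ with $\chi(S)-\chi(S')=\pm f_q$.

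First I would record the band structure of $A:=\ssp^{-1}(\Lambda_I)\in\cM_{I,I^c}$ (Corollary \ref{cor:matrix_injective}). By Proposition \ref{prop:solve_for_*_saturated}, every $*$- and $+$-entry of $A$ is nonzero, and by the saturated case of \S\ref{sec:M_II'_structure} the nonzero entries of $A$ occupy, in each row $p$, a contiguous block of columns $B_p$; consecutive blocks $B_p,B_{p+1}$ overlap in exactly one (special) column, non-consecutive blocks are disjoint, and the unique $1$-entry of row $p$ sits at the right end of $B_p$ (with $B_1$ ending in column $\bar1$ and $B_n$ beginning in column $1$). Consequently, whenever $S\in\mAdm_n$ admits a system of distinct representatives $p\mapsto d_p\in S\cap B_p$ respecting the linear order on the blocks, the matrix $A_S$ is anti-triangular with nonzero anti-diagonal, hence invertible, so $S\in\cD(\Lambda_I)$. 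Applying this to the ``rightmost'' selection $S^{(0)}$ (the set of all columns containing a $1$-entry of $A$) and the ``leftmost'' selection $S^{(1)}$ (the set of all columns that are leftmost-nonzero in some row), both lie in $\cD(\Lambda_I)$; they differ only in that $S^{(0)}$ contains column $\bar1$ where $S^{(1)}$ contains column $1$, so $\chi(S^{(1)})-\chi(S^{(0)})=f_1\in\Xi_I$.

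Finally I would propagate this: starting from $S^{(1)}$ and flipping, one value at a time, the representative of $q=2,3,\ldots,n$ between columns $q$ and $\bar q$, I obtain a chain $S^{(1)},S^{(2)},\ldots,S^{(n)}$ of maximal admissible sets, each again admitting a compatible system of representatives along the bands of $A$ (so each lies in $\cD(\Lambda_I)$ by the same anti-triangularity argument), with $\chi(S^{(m)})-\chi(S^{(m-1)})=\pm f_{q_m}$ as the $q_m$ run over $2,\ldots,n$. Hence $f_2,\ldots,f_n\in\Xi_I$, and together with $f_1\in\Xi_I$ this gives $\Xi_I=\mathbb{Z}^n=\Xi$. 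The main obstacle is this last step: one must specify the chain of single-column swaps carefully and check that the compatible system of representatives survives each swap (the rigidity of the band structure of $A$, together with the non-vanishing of the $*$- and $+$-entries, makes each individual check routine, but the transitions where a special column changes role between being a ``leftmost'' column and a ``$1$-entry'' column need to be tracked). A cleaner but less self-contained alternative, modulo the standard dictionary between torus orbit closures and their weight polytopes, is to note that $[\Xi:\Xi_I]$ is the order of the $T$-stabilizer of $\Lambda_I$ — the toric fact underlying the formula $c_k=[\Xi:\Xi_k]$ recalled in \S\ref{sec:prelim_degen} — which is trivial by the proof of Lemma \ref{lem:very_nice_saturated}, where the $T$-action on the nice locus of $\Sigma_{I,I^c}$ is identified, after a change of coordinates, with the simply transitive standard action on $(\mathbb{C}^*)^n$.
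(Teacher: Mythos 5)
Your overall route is the paper's: reduce to showing $f_q\in\Xi_I$ for each $q$ by exhibiting $S\in\cD(\Lambda_I)$ with $S(q)\in\cD(\Lambda_I)$ (single column flip $q\leftrightarrow\bar q$), and your two base sets $S^{(0)}=\{\bar1\}\cup\underline{\SC}$ and $S^{(1)}=\{1\}\cup\underline{\SC}$ are exactly the two sets used there. The band/SDR criterion is a reasonable repackaging of the paper's span computations, with one caveat: $A_S$ is \emph{not} anti-triangular when $S$ contains special columns (each special column has nonzero entries in two consecutive rows), so you need the slightly stronger observation that, since consecutive blocks overlap in exactly one column, the identity is the only permutation contributing to $\det A_S$ — or argue via column spans as the paper does.

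The genuine gap is the step you flag yourself, and it is not just bookkeeping: the unchained version of your argument — flip $q$ directly in $S^{(1)}$ — fails whenever the special column in $\{q,\bar q\}$ is $\bar q$. Indeed, if $\bar q=B_p\cap B_{p+1}$ then $q$ lies only in a block $B_{p'}$ with $p'\ge p+1$, so the only columns of $S^{(1)}(q)$ meeting rows $1,\dots,p$ are the $p-1$ special columns $B_1\cap B_2,\dots,B_{p-1}\cap B_p$; hence the top $p$ rows of $A_{S^{(1)}(q)}$ are supported on $p-1$ coordinates, $\rk A_{S^{(1)}(q)}\le n-1$, and $S^{(1)}(q)\notin\cD(\Lambda_I)$. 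The paper avoids this (and avoids chaining altogether) by choosing the base set per index: flip $q$ in $S^{(1)}$ when $q$ is special and in $S^{(0)}$ when $\bar q$ is special, each single flip being checked by an explicit span computation. Your chained variant happens to survive in examples only because earlier flips deposit columns into the blocks that would otherwise be starved, and that is precisely the unproved part; the fix is simply to do the flips independently from the correct base set. Finally, the proposed shortcut via stabilizers is not justified as stated: $[\Xi:\Xi_I]$ is an index of lattices generated by \emph{vertex} differences of the undilated polytopes, whereas the order of the $T$-stabilizer of $\Lambda_I$ in $\bP(\wedge^n\bC^{2n+1})$ is governed by differences of \emph{all} weights with nonvanishing Pl\"ucker coordinate, including those of inadmissible column sets; the two need not coincide (compare Remark \ref{rem:not_always_reduced}), and triviality of the stabilizer of the point of $\OG(n,2n+1)$ only gives $\dim Z_{\Lambda_I}=n$, not $\Xi_I=\bZ^n$.
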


\begin{proof}
    We must show that, for any $I$, we have $f_1,\ldots,f_n\in \Xi_I$. For a maximal admissible set $S\in\Adm_n$, and index $j\in[n]$, let $S(j)\in\Adm_n$ denote the result of replacing $j\in S$ with $\barj$, or vice versa. It suffices to show that, for all $j\in[n]$, there exists $S\in\cD(\Lambda_I)$ such that $S(j)\in\cD(\Lambda_I)$. Indeed, if this is the case, then $\chi(S)-\chi(S(j))=\pm f_j\in \Xi_I$. 
    
    As in \S\ref{sec:rank_ineqs}, for $p=1,\ldots,n$, let $h_p$ denote the column vector whose $p$-th entry is equal to 1 and whose other entries are zero. Let $A=\ssp^{-1}(\Lambda_I)$ be the unique matrix in $\cM_{I,I^c}$ whose row span is $\Lambda_I$, and for any subset $S$ of the columns of $A$, let $\langle A_S\rangle$ denote their span. Let $a^p\in \bC^{*}$ denote the leftmost $*$-entry in the $p$-th row of $A$. Let $\underline{\SC}$ be the set of all special columns of $A$.
    
    First, consider $j=1$. Then, let $S=\{\bar1\}\cup \underline{\SC}$. We have
    \begin{align*}
        \langle A_S\rangle &=\langle h_1,a^1h_1+h_2,\ldots,a^{n-1}h_{n-1}+h_n\rangle=\langle h_1,\ldots,h_n\rangle,\\
        \langle A_{S(1)}\rangle &=\langle a^1h_1+h_2,\ldots,a^{n-1}h_{n-1}+h_n,a^nh_n\rangle=\langle h_1,\ldots,h_n\rangle.
    \end{align*}
    Thus, $S$ has the needed property.

    Now, suppose that $j>1$, and suppose that column $j$ is special. Write $a^ph_p+h_{p+1}$ for the column vector in column $j$ and $a^{p'}h_{p'}$ for the column vector in column $\barj$, where $p'\le p$. Then, let $S=\{1\}\cup\underline{\SC}$. We have
    \begin{align*}
        \langle A_S\rangle &=\langle a^1h_1+h_2,\ldots,a^{n-1}h_{n-1}+h_n,a^nh_n\rangle=\langle h_1,\ldots,h_n\rangle,\\
        \langle A_{S(j)}\rangle &=\langle a^{p'}h_{p'}, a^1h_1+h_2,\ldots,a^{p-1}h_{p-1}+h_p\rangle\\
        &\quad \oplus 
        \langle a^{p+1}h_{p+1}+h_{p+2},\ldots,a^{n-1}h_{n-1}+h_n,a^nh_n\rangle=\langle h_1,\ldots,h_n\rangle.
    \end{align*}
    Thus, $S$ again has the needed property. If instead column $\barj$ is special, then take instead $S=\{\bar1\}\cup \underline{\SC}$.
    \end{proof}

\begin{remark}\label{rem:not_always_reduced}
    In contrast to the case of ordinary Grassmannians \cite[Proposition 1.2.15]{kapranov}, it is not true for arbitrary points $\Lambda\in\OG(n,2n+1)$ that the sublattice $\Xi$ associated to $\cP(\Lambda)$ equals the full lattice $\bZ^n$. For example, if
    \begin{equation*}
        \Lambda=\ssp\left(
        \begin{bmatrix}
            1 & 0 & 0 & 0 & a & b & 0 \\
            0 & 1 & 0 & 0 & c & 0 & -b\\
            0 & 0 & 1 & 0 & 0 & -c & -a
        \end{bmatrix}
        \right)\in\OG(3,7),
    \end{equation*}
    for $a,b,c\in\bC^{*}$, then $\cP(\Lambda)$ is the convex hull of the vectors $f_1,f_2,f_3,f_1+f_2+f_3\in\bR^3$, and $\Xi=\bZ\langle f_1+f_2,f_1+f_3,f_2+f_3\rangle$ has index 2 in $\bZ^3$. This example also appears in \cite[Example 5.1]{els}, but note that our column indexing is different.
\end{remark}

\begin{proof}[Proof of Theorem \ref{thm:degen}]
We first show that, for any non-saturated allowed pair $(J,J')$, in the degeneration $\pi_{J,J'}$ constructed in \S\ref{sec:degen}, the components $Z_{\Lambda_\ell},Z_{\Lambda_r}$ are the only ones in the special fiber $\cZ_0$. If this were not the case, then for the nicest $\Lambda_J\in Z_{J,J'}$, the complement
\begin{equation*}
    \cP(\Lambda_J)\backslash(\cP(\Lambda_\ell)\cup\cP(\Lambda_r))
\end{equation*}
would be contained in the complement of $\bigcup_{I\subset[n-1]}\cP(\Lambda_{I})$ in $\cP(\Lambda)$. However, Proposition \ref{prop:polytope_union} shows that this is impossible.

It follows that the algebraic cycle
\begin{equation*}
    [Z_\Lambda]-\sum_{I\subset [n-1]} c_I[\Sigma_{I,I^c}].
\end{equation*}
is homologically trivial, where the multiplicities $c_I$ have been determined to equal 1 in Proposition \ref{prop:multiplicity_1}. Accordingly, the limit components of all intermediate steps of the degeneration must also appear with multiplicity 1, or else the cycle above would be non-zero and effective, hence homologically non-trivial, as $\OG(n,2n+1)$ is projective. This completes the proof of Theorem \ref{thm:degen}. Theorem \ref{thm:class_formula} also follows.
\end{proof}

\appendix

\section{Generalities on $\OG(n,2n+1)$}

\subsection{Geometry of isotropic subspaces}

We adopt the notation of \S\ref{sec:OG}. Fix throughout an isotropic subspace $\Lambda\in\OG(n,2n+1)$. 

\begin{lemma}\label{find_nice_vector}
Let $q\in[n-1]$ be an integer. Suppose that
\begin{equation*}
    \dim(\Lambda\cap H_{\{1,\ldots,q,q+1,\overline{q+1}\}})\ge n-q.
\end{equation*}
Then, we have 
\begin{equation*}
\dim(\Lambda\cap H_{\{0,1,\ldots,q,q+1,\overline{q+1},\ldots,n,\overline{n}\}})\ge 1.
    \end{equation*}
\end{lemma}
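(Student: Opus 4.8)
The plan is to exploit the fact that the restriction of the symmetric form to $H_{S_1}$, where $S_1=\{1,\ldots,q,q+1,\overline{q+1}\}$, is degenerate in a very controlled way, and then run a dimension count. For any $v,w\in H_{S_1}$, all coordinates indexed by $1,\ldots,q+1$ and by $\overline{q+1}$ vanish, so in the expansion $v\cdot w=2v_0w_0+\sum_{i=1}^n(v_iw_{\bar i}+v_{\bar i}w_i)$ every summand with $i\in\{1,\ldots,q+1\}$ drops out, leaving $v\cdot w=2v_0w_0+\sum_{i=q+2}^n(v_iw_{\bar i}+v_{\bar i}w_i)$. This formula never involves the coordinates $\bar 1,\ldots,\bar q$, so $R:=\langle e_{\bar 1},\ldots,e_{\bar q}\rangle\subseteq H_{S_1}$ lies in the radical of the restricted form; conversely, pairing a putative radical vector against $e_0$ and against $e_i,e_{\bar i}$ for $q+2\le i\le n$, together with membership in $H_{S_1}$, forces it into $R$. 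Hence $R$ is exactly the radical, $\dim R=q$, and $H_{S_1}/R$ carries a non-degenerate symmetric form of dimension $\dim H_{S_1}-q=(2n-q-1)-q=2(n-q-1)+1$.

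Next, set $W:=\Lambda\cap H_{S_1}$; by hypothesis $\dim W\ge n-q$. Since $\Lambda$ is isotropic, $W$ is isotropic for the form on $H_{S_1}$, hence its image $\overline W\cong W/(W\cap R)$ in $H_{S_1}/R$ is isotropic for the induced non-degenerate form. Over $\bC$, an isotropic subspace of a non-degenerate quadratic space of odd dimension $2m+1$ has dimension at most $m$, so $\dim\overline W\le n-q-1$. Therefore $\dim(W\cap R)=\dim W-\dim\overline W\ge(n-q)-(n-q-1)=1$. Any nonzero $v\in W\cap R$ lies in $\Lambda$ and is supported only on the coordinates $\bar 1,\ldots,\bar q$, i.e.\ $v\in H_S$ with $S=\wtn^0\setminus\{\bar 1,\ldots,\bar q\}=\{0,1,\ldots,q,q+1,\overline{q+1},\ldots,n,\overline{n}\}$. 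This is exactly the asserted inequality $\dim(\Lambda\cap H_S)\ge 1$.

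The one step demanding genuine care is the identification of the radical of the restricted form: one must track precisely which coordinates $S_1$ kills --- crucially \emph{both} $q+1$ and $\overline{q+1}$, which is what makes the whole $i=q+1$ term of the bilinear form vanish --- and verify that no coordinate outside $\{\bar 1,\ldots,\bar q\}$ survives in the radical. Everything after that is the standard bound on isotropic subspaces plus a one-line dimension subtraction, and the hypothesis $q\in[n-1]$ ensures $\dim R=q\ge 1$ so that the conclusion is not vacuous.
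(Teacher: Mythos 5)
Your proof is correct, and it takes a genuinely different route from the paper's. The paper argues iteratively: assuming $q\le n-2$, it produces two nonzero vectors $v,w$ in $\Lambda\cap H_{S_1}$ with complementary vanishing in the coordinate pair $q+2,\overline{q+2}$, and uses the three relations $v\cdot v=w\cdot w=v\cdot w=0$ to force the single identity $a_{\overline{q+2}}b_{q+2}=0$, hence one of $v,w$ gains an extra zero coordinate; repeating this through the pairs $q+3,\overline{q+3},\ldots,n,\overline{n}$ and finally killing the $0$-coordinate by isotropy yields the claim. You instead do it in one shot: you identify the radical $R=\langle e_{\bar1},\ldots,e_{\bar q}\rangle$ of the form restricted to $H_{S_1}$ (your verification of both inclusions is correct), observe that $H_{S_1}/R$ is a non-degenerate quadratic space of odd dimension $2(n-q-1)+1$ whose isotropic subspaces have dimension at most $n-q-1$, and conclude $\dim(W\cap R)\ge 1$ by subtraction. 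Your argument is shorter, avoids the case split at $q=n-1$ and the induction over coordinate pairs, and is in the same spirit as the quotient-space argument the paper itself uses in the proof of Proposition \ref{prop:*_nonzero_box}; the paper's version is more elementary in that it only ever manipulates explicit coordinates of two vectors at a time. Both are complete proofs of the lemma.
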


\begin{proof}
There is nothing to check if $q=n-1$, so assume that $q\le n-2$. Observe that
\begin{align*}
    \dim(\Lambda\cap H_{\{1,\ldots,q,q+1,\overline{q+1},q+2,q+3,\ldots,n\}})&\ge 1, \\
    \dim(\Lambda\cap H_{\{1,\ldots,q,q+1,\overline{q+1},\overline{q+2},q+3,\ldots,n\}})&\ge 1,
\end{align*}
because $H_{\{q+2,q+3,\ldots,n\}}$ and $H_{\{\overline{q+2},q+3,\ldots,n\}}$ have codimension $n-q-1$ in $\bC^{2n+1}$. 

We claim that
    \begin{equation*}
    \dim(\Lambda\cap H_{\{1,\ldots,q,q+1,\overline{q+1},q+2,\overline{q+2},q+3,\ldots,n\}})\ge 1.
    \end{equation*}
Indeed, choose non-zero vectors
\begin{align*}
        v&\in \Lambda\cap H_{\{1,\ldots,q,q+1,\overline{q+1},q+2,q+3,\ldots,n\}}, \\
        w&\in \Lambda\cap H_{\{1,\ldots,q,q+1,\overline{q+1},\overline{q+2},q+3,\ldots,n\}}.
\end{align*}
so that, in coordinates,
\begin{align*}
v&=a_0e_0+a_{\bar1}e_{\bar1}+\cdots+a_{\overline{q}}e_{\overline{q}}+a_{\overline{q+2}}e_{\overline{q+2}}+a_{\overline{q+3}}e_{\overline{q+3}}+\cdots+a_{\overline{n}}e_{\overline{n}},\\
w&=b_0e_0+b_{\bar1}e_{\bar1}+\cdots+b_{\overline{q}}e_{\overline{q}}+b_{q+2}e_{q+2}+b_{\overline{q+3}}e_{\overline{q+3}}+\cdots+b_{\overline{n}}e_{\overline{n}}.\\
\end{align*}
Because $\Lambda$ is isotropic, we have $v\cdot v=w\cdot w=0$, hence $a_0=b_0=0$. Moreover, we have $v\cdot w=0$, hence $a_{\overline{q+2}}b_{q+2}=0$. Thus, one of $v,w$ lies in $H_{\{1,\ldots,q,q+1,\overline{q+1},q+2,\overline{q+2},q+3,\ldots,n\}}$, as needed.

Note also by the same argument (swapping the roles of $q+3$ and $\overline{q+3}$) that we have \emph{both} the inequalities
    \begin{align*}
    \dim(\Lambda\cap H_{\{1,\ldots,q,q+1,\overline{q+1},q+2,\overline{q+2},q+3,\ldots,n\}})&\ge 1,\\
    \dim(\Lambda\cap H_{\{1,\ldots,q,q+1,\overline{q+1},q+2,\overline{q+2},\overline{q+3},\ldots,n\}})&\ge 1.
    \end{align*}
Now, we iterate the argument: we can deduce further that
    \begin{equation*}
    \dim(\Lambda\cap H_{\{1,\ldots,q,q+1,\overline{q+1},q+2,\overline{q+2},q+3,\overline{q+3},\ldots,n\}})\ge 1,
    \end{equation*}
and so forth, until arriving at
    \begin{equation*}
    \dim(\Lambda\cap H_{\{1,\ldots,q,q+1,\overline{q+1},\ldots,n,\overline{n}\}})\ge 1.
    \end{equation*}

Finally, observe that
\begin{equation*}
    \Lambda\cap H_{\{1,\ldots,q,q+1,\overline{q+1},\ldots,n,\overline{n}\}}=\Lambda\cap H_{\{0,1,\ldots,q,q+1,\overline{q+1},\ldots,n,\overline{n}\}},
\end{equation*}
because $\Lambda$ is isotropic.
\end{proof}

\begin{lemma}\label{rank+1}
If $\dim(\Lambda\cap H_{\{1,\ldots,q\}})=n-q$, then $\dim(\Lambda\cap H_{\{1,\ldots,q,q+1,\overline{q+1}\}})\le n-q-1$.
\end{lemma}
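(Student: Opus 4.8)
The plan is to argue by contradiction after translating everything into statements about the orthogonal complement $\Lambda^\perp$, which has dimension $n+1$ and contains the isotropic subspace $\Lambda$. First I would reformulate the hypothesis: since $\dim\Lambda+\dim H_{\{1,\ldots,q\}}=n+(2n+1-q)$, the assumption $\dim(\Lambda\cap H_{\{1,\ldots,q\}})=n-q$ is equivalent to $\Lambda+H_{\{1,\ldots,q\}}=\bC^{2n+1}$. One checks directly from the form that $H_{\{1,\ldots,q\}}=(F'_q)^\perp$, so taking orthogonal complements yields $\Lambda^\perp\cap F'_q=0$.

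Next I would set $N:=F'_q+\langle e_{q+1},e_{\overline{q+1}}\rangle$, a $(q+2)$-dimensional subspace. A short computation with the form gives $N^\perp=H_{\{1,\ldots,q,q+1,\overline{q+1}\}}$, and then the dimension formula gives $\dim(\Lambda\cap N^\perp)=n-q-2+\dim(\Lambda^\perp\cap N)$. Thus the lemma is equivalent to the bound $\dim(\Lambda^\perp\cap N)\le 1$, which is what I would prove. Since $N/F'_q\cong\langle e_{q+1},e_{\overline{q+1}}\rangle$ and $\Lambda^\perp\cap F'_q=0$, the projection $N\to\langle e_{q+1},e_{\overline{q+1}}\rangle$ is injective on $\Lambda^\perp\cap N$, so $\dim(\Lambda^\perp\cap N)\le 2$ immediately.

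Suppose for contradiction that $\dim(\Lambda^\perp\cap N)=2$. Then the projection restricts to an isomorphism onto $\langle e_{q+1},e_{\overline{q+1}}\rangle$, so I may choose $d_1,d_2\in\Lambda^\perp\cap N$ with $d_1=e_{q+1}+f_1$ and $d_2=e_{\overline{q+1}}+f_2$ for some $f_1,f_2\in F'_q$. Because $F'_q$ is isotropic and orthogonal to both $e_{q+1}$ and $e_{\overline{q+1}}$, and because $e_{q+1}\cdot e_{q+1}=e_{\overline{q+1}}\cdot e_{\overline{q+1}}=0$ while $e_{q+1}\cdot e_{\overline{q+1}}=1$, the Gram matrix of $(d_1,d_2)$ is $\left(\begin{smallmatrix}0&1\\1&0\end{smallmatrix}\right)$, which is invertible. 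Now $\dim\langle d_1,d_2\rangle+\dim\Lambda=2+n>n+1=\dim\Lambda^\perp$, so the plane $\langle d_1,d_2\rangle$ meets $\Lambda$ in a nonzero vector $c=\alpha d_1+\beta d_2$. Since $d_1,d_2\perp\Lambda$ and $c\in\Lambda$, we get $c\cdot d_1=c\cdot d_2=0$; invertibility of the Gram matrix forces $\alpha=\beta=0$, contradicting $c\neq 0$. Hence $\dim(\Lambda^\perp\cap N)\le 1$, and the lemma follows.

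The argument is essentially mechanical; the one point requiring care is the \emph{choice} of $d_1,d_2$, namely taking them to be the preimages of the standard basis vectors $e_{q+1},e_{\overline{q+1}}$ rather than an arbitrary basis of $\Lambda^\perp\cap N$ — this is exactly what makes the relevant Gram matrix nondegenerate and closes the argument. The only other things to verify are the two bookkeeping identities $H_{\{1,\ldots,q\}}=(F'_q)^\perp$ and $N^\perp=H_{\{1,\ldots,q,q+1,\overline{q+1}\}}$, which are routine.
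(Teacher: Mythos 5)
Your proof is correct, but it takes a genuinely different route from the paper's. The paper argues by contradiction via its Lemma \ref{find_nice_vector}: assuming $\dim(\Lambda\cap H_{\{1,\ldots,q,q+1,\overline{q+1}\}})=n-q$, that lemma (itself an iterative argument that exploits the isotropy of $\Lambda$ one coordinate pair at a time) produces a non-zero $v\in\Lambda$ supported only in the coordinates $\bar1,\ldots,\barq$, i.e.\ a non-zero vector of $\Lambda\cap F'_q$; since the $2n$-dimensional hyperplane $v^{\perp}$ then contains both $\Lambda$ and $H_{\{1,\ldots,q\}}$, a dimension count forces $\dim(\Lambda\cap H_{\{1,\ldots,q\}})\ge n-q+1$, contradicting the hypothesis. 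You instead dualize everything: the hypothesis becomes $\Lambda^{\perp}\cap F'_q=0$, the conclusion becomes $\dim(\Lambda^{\perp}\cap N)\le 1$ for $N=F'_q+\langle e_{q+1},e_{\overline{q+1}}\rangle$, and the bad case $\dim(\Lambda^{\perp}\cap N)=2$ is excluded because the induced form on $N/F'_q$ is a nondegenerate hyperbolic plane: your normalized basis $d_1,d_2$ has invertible Gram matrix, so the plane $\langle d_1,d_2\rangle$ is nondegenerate and hence meets $\Lambda\subset\langle d_1,d_2\rangle^{\perp}$ trivially, contradicting $\dim\langle d_1,d_2\rangle+\dim\Lambda>\dim\Lambda^{\perp}$. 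All the bookkeeping identities you rely on ($H_{\{1,\ldots,q\}}=(F'_q)^{\perp}$, $N^{\perp}=H_{\{1,\ldots,q,q+1,\overline{q+1}\}}$, and the dimension formula $\dim(\Lambda\cap N^{\perp})=n-q-2+\dim(\Lambda^{\perp}\cap N)$) check out. What your approach buys is self-containment: it bypasses Lemma \ref{find_nice_vector} entirely and isolates the real mechanism (nondegeneracy of the residual hyperbolic plane) in a single linear-algebra step; what the paper's approach buys is that Lemma \ref{find_nice_vector} is stated and proved anyway as a standalone tool, so the deduction of Lemma \ref{rank+1} from it is very short.
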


\begin{proof}
 Assume for sake of contradiction $\dim(\Lambda\cap H_{\{1,\ldots,q,q+1,\overline{q+1}\}})=n-q$. By Lemma \ref{find_nice_vector}, we may choose a non-zero vector $v\in \Lambda\cap H_{\{0,1,\ldots,q,q+1,\overline{q+1},\ldots,n,\overline{n}\}}$. Note on the one hand that 
 \begin{equation*}
     \Lambda\subset \Lambda^{\perp}\subset v^{\perp},
 \end{equation*}
 and on the other hand that 
 \begin{equation*}
      H_{\{1,\ldots,q\}} = H_{\{0,1,\ldots,q,q+1,\overline{q+1},\ldots,n,\overline{n}\}}^{\perp} \subset v^{\perp}.
 \end{equation*}
 The subspaces $\Lambda,H_{\{1,\ldots,q\}}\subset v^{\perp}$ have dimensions $n,2n+1-q$, respectively, and $v^{\perp}$ has dimension $2n$. Thus,
 \begin{equation*}
     \dim(\Lambda\cap H_{\{1,\ldots,q\}})\ge (n+(2n+1-q))-2n=n-q+1,
 \end{equation*}
 which is a contradiction.
\end{proof}

\begin{corollary}\label{cor:rank+j}
    Let $S\subset \wtn$ be an admissible subset. Let $J\subset[n]$ be such that $J\cup \overline {J}$ is disjoint from $S$. Write $\ell=|J|$. Then, for some admissible subset $J_+\subset J\cup \overline {J}$ of size $\ell$ (the largest possible), we have
    \begin{equation*}
        \rk_\Lambda(S\cup J_+) = \rk_\Lambda(S)+\ell
    \end{equation*}
\end{corollary}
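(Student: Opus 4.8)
The plan is to induct on $\ell=|J|$, the crucial input being the case $\ell=1$, which is a coordinate-free strengthening of Lemma \ref{rank+1}.

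The base case $\ell=0$ is trivial ($J_+=\emptyset$). For the inductive step, fix any $q\in J$. Since $S\cup J$ is admissible we have $\rk_\Lambda(S)\le |S|\le n-\ell<n$, so $\Lambda\cap H_S\neq 0$. I first claim that not both of $q,\bar q$ can be adjoined to $S$ without raising the rank; equivalently, $\Lambda\cap H_S\not\subseteq H_{S\cup\{q,\bar q\}}$, i.e.\ some vector of $\Lambda\cap H_S$ has a nonzero $q$- or $\bar q$-coordinate. Granting this, pick $q'\in\{q,\bar q\}$ with $\rk_\Lambda(S\cup\{q'\})=\rk_\Lambda(S)+1$ (this increment is automatically at most $1$); then $S\cup\{q'\}$ is admissible, $(J\setminus\{q\})\cup\overline{J\setminus\{q\}}$ is disjoint from it, and the inductive hypothesis applied to $(S\cup\{q'\},\,J\setminus\{q\})$ produces an admissible $J'_+\subseteq (J\setminus\{q\})\cup\overline{J\setminus\{q\}}$ of size $\ell-1$ with $\rk_\Lambda(S\cup\{q'\}\cup J'_+)=\rk_\Lambda(S\cup\{q'\})+(\ell-1)=\rk_\Lambda(S)+\ell$. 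Then $J_+:=\{q'\}\cup J'_+$ does the job, and $|J_+|=\ell$ is manifestly the largest possible size of an admissible subset of $J\cup\overline J$.

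It remains to establish the claim. First I would reduce to the case where $A_S$ has full column rank: choosing $S_0\subseteq S$ with $|S_0|=\rk_\Lambda(S)$ whose columns span the same space as the columns of $A_S$, one gets $\Lambda\cap H_{S_0}=\Lambda\cap H_S$ and the columns of $A$ indexed by $q$ and $\bar q$ lie in the span of the columns of $A_S$ precisely when they lie in the span of the columns of $A_{S_0}$; moreover $S_0$ is admissible, avoids $q$ and $\bar q$, and $\rk_\Lambda(S_0)=|S_0|<n$. Because $0\notin S_0$ (as $S_0\subseteq\wtn$), I may now apply a signed-permutation isometry of $\bC^{2n+1}$ — a Weyl-group element permuting the hyperbolic pairs $(e_i,e_{\bar i})$ and flipping some of them while fixing $e_0$, hence an automorphism of $\OG(n,2n+1)$ — to arrange $S_0=\{1,\ldots,\rho\}$ and $q=\rho+1$, where $\rho=|S_0|<n$. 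In these coordinates the full-rank hypothesis reads $\dim(\Lambda\cap H_{\{1,\ldots,\rho\}})=n-\rho$, and the claim reads $\dim(\Lambda\cap H_{\{1,\ldots,\rho,\rho+1,\overline{\rho+1}\}})\le n-\rho-1$: this is exactly Lemma \ref{rank+1}.

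I expect the reduction to Lemma \ref{rank+1} to be the only real point: one must verify that passing to $S_0$ preserves admissibility and disjointness, and one must genuinely use the hypothesis $0\notin S$ — the statement fails without it, since if $0\in S$ then $e_0\notin H_S$ and the isometry normalization is unavailable. The remaining ingredients — that adjoining a single column raises $\rk_\Lambda$ by at most $1$, and the combinatorial bookkeeping of the induction — are routine.
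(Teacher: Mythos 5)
Your proof is correct and follows essentially the same route as the paper's: pass to a full-column-rank subset $S_0\subseteq S$, normalize by a (signed) permutation of coordinates so that $S_0=\{1,\ldots,\rho\}$ and the new index is $\rho+1$, apply Lemma \ref{rank+1}, and induct on $\ell$. Your write-up is merely more explicit about the reduction steps (the paper compresses them into ``by permuting coordinates'').
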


If $A$ is any matrix whose rows form a basis of $\Lambda$, then recall from Definition \ref{def:rank} that $\rk_\Lambda(S)$ is the rank of the submatrix $A_S$ consisting of columns of $A$ indexed by $S$. Equivalently, we have $\rk_\Lambda(S)=n-\dim(\Lambda\cap H_S)$. On the level of matrices, Corollary \ref{cor:rank+j} says that, for each $j\in J$, one can make a choice of either $j$ or $\barj$ (to comprise $J_+$) so that the corresponding $\ell$ column vectors of $A$ are independent from each other and from the column span of $A_S$.

\begin{proof}
We may assume that $S=\{1,2,\ldots,q\}$ by permuting coordinates. We may replace $A_S$ with a submatrix consisting of independent columns, so reduce to the case where $\rk(A_S)=|S|=q$. We may similarly assume that $J=\{q+1,\ldots,q+\ell\}$ by permuting coordinates. Then, the claim follows from Lemma \ref{rank+1} by induction on $\ell$.
\end{proof}

\begin{corollary}\label{cor:rank_in_terms_of_M}
    For any $S\in \Adm_n$, we have
    \begin{equation*}
        \rk_\Lambda(S)=\max_{S'\in\cD(\Lambda)}|S'\cap S|.
    \end{equation*}
\end{corollary}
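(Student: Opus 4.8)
The equality will be proven by two inequalities. Write $r=\rk_\Lambda(S)$, and fix a matrix $A$ whose rows are a basis of $\Lambda$, so that $\rk_\Lambda(T)$ is the rank of the column submatrix $A_T$ for any $T$.

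\emph{The inequality $\max_{S'\in\cD(\Lambda)}|S'\cap S|\le r$.} For any $S'\in\cD(\Lambda)$, by definition $\rk_\Lambda(S')=n=|S'|$, so the columns of $A_{S'}$ are linearly independent. Hence any subset of them is linearly independent; in particular $\rk_\Lambda(S'\cap S)=|S'\cap S|$. Since $S'\cap S\subseteq S$, we get $|S'\cap S|=\rk_\Lambda(S'\cap S)\le \rk_\Lambda(S)=r$, as needed.

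\emph{The inequality $\max_{S'\in\cD(\Lambda)}|S'\cap S|\ge r$.} It suffices to produce a single $S'\in\cD(\Lambda)$ with $S'\supseteq S_0$ for a suitable $S_0\subseteq S$ of size $r$. Choose $r$ columns of $A_S$ that are linearly independent, and let $S_0\subseteq S$ be the set indexing them; then $|S_0|=r$, $\rk_\Lambda(S_0)=r$, and $S_0$ is admissible since $S\supseteq S_0$ is. Let $J=\{q\in[n]:q\notin S_0\text{ and }\barq\notin S_0\}$; because $S_0$ is admissible of size $r$, it meets exactly $r$ of the pairs $\{q,\barq\}$, so $|J|=n-r$ and $J\cup\overline{J}$ is disjoint from $S_0$. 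Applying Corollary \ref{cor:rank+j} to $S_0$ and $J$, there is an admissible $J_+\subseteq J\cup\overline{J}$ with $|J_+|=n-r$ and $\rk_\Lambda(S_0\cup J_+)=\rk_\Lambda(S_0)+(n-r)=n$. Set $S'=S_0\cup J_+$. Then $S'$ is admissible (for each pair $\{q,\barq\}$, at most one of $q,\barq$ lies in $S_0\cup J_+$: if $q\in J$ this is ensured by admissibility of $J_+$, and otherwise by admissibility of $S_0$), it has size $r+(n-r)=n$, and $\rk_\Lambda(S')=n$, so $S'\in\cD(\Lambda)$. Finally $S_0\subseteq S'\cap S$, so $|S'\cap S|\ge|S_0|=r$. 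Combining the two inequalities gives the claim.

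\textbf{Main obstacle.} The only nontrivial input is the extension step, i.e.\ enlarging an independent set of columns indexed by an admissible set to a full-rank admissible set of size $n$; this is exactly the content of Corollary \ref{cor:rank+j} (ultimately Lemma \ref{rank+1}), which we invoke as a black box. Everything else is bookkeeping about admissibility and cardinalities.
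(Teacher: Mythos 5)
Your proof is correct and follows essentially the same route as the paper's: the easy inequality via linear independence of the columns of $A_{S'}$ for $S'\in\cD(\Lambda)$, and the reverse inequality by passing to a maximal independent subset $S_0\subseteq S$ (the paper's $S_-$) and extending it to a full-rank maximal admissible set using Corollary \ref{cor:rank+j} with the same choice of $J$. The extra bookkeeping you include on admissibility of $S'$ is fine and only makes explicit what the paper leaves implicit.
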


\begin{proof}
As above, we interpret $\rk_\Lambda(S)$ as the rank of the matrix $A_S$. For any $S'\in \cD(\Lambda)$, we have
\begin{equation*}
    \rk(A_S)\ge \rk(A_{S\cap S'})=|S'\cap S|,
\end{equation*}
so $\rk(A_S)\ge \max_{S'\in\cD(\Lambda)}|S'\cap S|$. 

Conversely, let $S_{-}\subset S$ be a maximal subset of linearly independent columns of $A_S$, so that $\rk(A_S)=\rk(A_{S_{-}})=|S_{-}|$. Replacing $S$ with $S_{-}$ and taking
\begin{equation*}
    J=[n]\backslash \left((S_{-}\cap [n]) \cup \overline{(S_{-}\cap \overline{[n]})}\right)
\end{equation*}
in Corollary \ref{cor:rank+j}, there exists $S'\in\cD(\Lambda)$ such that $S'\supset S_{-}$, namely, $S'=S_{-}\cup J_{+}$. In particular, we have
\begin{equation*}
    \rk(A_S)=|S_{-}|\le |S'\cap S|\le \max_{S'\in\cD(\Lambda)}|S'\cap S|,
\end{equation*}
as needed.
\end{proof}

In the next section, we will also need the following property of the function $g_\Lambda(S)$, also defined in Definition \ref{def:rank}.

\begin{corollary}\label{cor:bisubmodular}
    The function $g_\Lambda(S):=\rk_\Lambda(S)-|S\cap \overline{[n]}|$ is \emph{bisubmodular}. That is, for any two $S_1,S_2\in\Adm_n$, we have
    \begin{equation*}
        g_\Lambda(S_1)+g_\Lambda(S_2)\ge g_\Lambda(S_1\cap S_2)+g_\Lambda(S_1 \sqcup S_2),
    \end{equation*}
    where $S_1\sqcup S_2:=\{a\in S_1\cup S_2:\overline{a}\notin S_1\cup S_2\}$.
\end{corollary}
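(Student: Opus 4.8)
The plan is to reduce bisubmodularity of $g_\Lambda$ to a statement about the rank function $\rk_\Lambda$, and then prove that statement on the level of matrices. First I would unwind the definitions: writing $g_\Lambda(S) = \rk_\Lambda(S) - |S\cap\overline{[n]}|$, the claimed inequality
\begin{equation*}
    g_\Lambda(S_1) + g_\Lambda(S_2) \ge g_\Lambda(S_1\cap S_2) + g_\Lambda(S_1\sqcup S_2)
\end{equation*}
becomes
\begin{equation*}
    \rk_\Lambda(S_1) + \rk_\Lambda(S_2) - \rk_\Lambda(S_1\cap S_2) - \rk_\Lambda(S_1\sqcup S_2) \ge |S_1\cap\overline{[n]}| + |S_2\cap\overline{[n]}| - |(S_1\cap S_2)\cap\overline{[n]}| - |(S_1\sqcup S_2)\cap\overline{[n]}|.
\end{equation*}
I would first check the combinatorial right-hand side directly: for each fixed $q\in[n]$, exactly one of the bookkeeping quantities $\mathbf{1}[q\in S_i], \mathbf{1}[\barq\in S_i]$ can be nonzero (admissibility), and a short case analysis over the patterns of membership of $q,\barq$ in $S_1,S_2$ shows that the right-hand side equals $\sum_{q} \delta_q$, where $\delta_q\in\{0,1\}$ is $1$ precisely when exactly one of $S_1,S_2$ contains $\barq$ and the other contains $q$ (equivalently, $q,\barq$ each appear once among the two sets, with the element of $\overline{[n]}$ "lost" in passing to $S_1\sqcup S_2$). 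So the right-hand side counts the number of coordinates at which $S_1$ and $S_2$ disagree in an $\overline{[n]}$-losing way.

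The heart of the argument is then a refined submodularity estimate for $\rk_\Lambda$. Fix a matrix $A$ whose rows span $\Lambda$, and interpret $\rk_\Lambda(S)$ as $\dim\langle A_S\rangle$. Ordinary submodularity of matrix rank gives $\rk_\Lambda(S_1) + \rk_\Lambda(S_2) \ge \rk_\Lambda(S_1\cup S_2) + \rk_\Lambda(S_1\cap S_2)$, but $S_1\cup S_2$ need not be admissible and is generally bigger than $S_1\sqcup S_2$. The key point is to bound how much rank is lost in passing from $S_1\cup S_2$ to $S_1\sqcup S_2$: one removes, for each "disagreeing" coordinate $q$ as above, exactly one column of $A$ (the $\barq$-column), so
\begin{equation*}
    \rk_\Lambda(S_1\cup S_2) \ge \rk_\Lambda(S_1\sqcup S_2) \ge \rk_\Lambda(S_1\cup S_2) - \#\{\text{disagreeing } q\} = \rk_\Lambda(S_1\cup S_2) - \textstyle\sum_q \delta_q.
\end{equation*}
Combining this with ordinary submodularity yields exactly
\begin{equation*}
    \rk_\Lambda(S_1) + \rk_\Lambda(S_2) - \rk_\Lambda(S_1\cap S_2) - \rk_\Lambda(S_1\sqcup S_2) \ge -\textstyle\sum_q\delta_q + \textstyle\sum_q\delta_q \cdot 0 \ \ \text{is the wrong sign},
\end{equation*}
so in fact the naive bound is not quite enough and one must instead exploit the \emph{orthogonality} of $\Lambda$ (via Corollary \ref{cor:rank+j}) to recover, for each disagreeing $q$, one unit of rank back: after choosing independent columns realizing $\rk_\Lambda(S_1\cap S_2)$, Corollary \ref{cor:rank+j} lets one adjoin, for each such $q$, whichever of $q,\barq$ keeps the columns independent, showing $\rk_\Lambda\big((S_1\cap S_2)\cup\{\text{disagreeing coords, admissibly chosen}\}\big) = \rk_\Lambda(S_1\cap S_2) + \sum_q\delta_q$, and this augmented set sits inside $S_1\sqcup S_2$ so $\rk_\Lambda(S_1\sqcup S_2) \ge \rk_\Lambda(S_1\cap S_2) + \sum_q\delta_q$ — no, this also goes the wrong way.

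So the genuinely careful version is: apply submodularity of matrix rank not to $S_1,S_2$ but to the admissible sets directly, using Corollary \ref{cor:rank_in_terms_of_M} to rewrite $\rk_\Lambda(S) = \max_{S'\in\cD(\Lambda)}|S'\cap S|$, and then bisubmodularity of $g_\Lambda$ follows from the known fact that $S\mapsto \max_{S'\in\cD(\Lambda)}|S'\cap S| - |S\cap\overline{[n]}|$ is the rank function of the delta-matroid $\cD(\Lambda)$, which is bisubmodular by the general theory of delta-matroids (\cite{bouchet,bgw}). Concretely, I would: (1) invoke Corollary \ref{cor:rank_in_terms_of_M} to identify $g_\Lambda$ with the standard delta-matroid rank function; (2) cite bisubmodularity of delta-matroid rank functions from the foundational references, or (3) if a self-contained proof is wanted, pick $S_1', S_2'\in\cD(\Lambda)$ achieving the maxima for $\rk_\Lambda(S_1), \rk_\Lambda(S_2)$ and run the standard symmetric-exchange argument for delta-matroids to produce feasible sets witnessing the right-hand maxima. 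The main obstacle is step (3): the symmetric exchange / "twisted" submodularity argument for delta-matroids is more delicate than ordinary matroid submodularity because one must track the $\overline{[n]}$-correction terms, so unless the paper is content to cite \cite{bgw,bouchet}, this case analysis — matching up how elements of $S_1'\triangle S_2'$ redistribute between $S_1\cap S_2$ and $S_1\sqcup S_2$ — is where the real work lies.
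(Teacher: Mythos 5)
There is a genuine gap here: you assembled the right ingredients but misapplied the key lemma and then abandoned the correct line of attack. Your first reduction is fine: after the elementary cardinality identity $|S_1\cap\overline{[n]}|+|S_2\cap\overline{[n]}|=|(S_1\cap S_2)\cap\overline{[n]}|+|(S_1\cup S_2)\cap\overline{[n]}|$ and ordinary submodularity of matrix rank, the whole statement reduces to the single inequality
\begin{equation*}
    \rk_\Lambda(S_1\cup S_2)-\rk_\Lambda(S_1\sqcup S_2)\ \ge\ |\{j: j,\barj\in S_1\cup S_2\}|,
\end{equation*}
i.e.\ a \emph{lower} bound on the rank drop when passing from $S_1\cup S_2$ to $S_1\sqcup S_2$. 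This is exactly where Corollary \ref{cor:rank+j} enters, but you applied it with base set $S_1\cap S_2$ and then asserted that the augmented set sits inside $S_1\sqcup S_2$ — it does not: the ``disagreeing'' indices $j$ (those with $j,\barj\in S_1\cup S_2$) are by definition precisely the ones \emph{excluded} from $S_1\sqcup S_2$. The correct application is to take $S=S_1\sqcup S_2$ and $J=\{j: j,\barj\in S_1\cup S_2\}$; then $J\cup\overline{J}$ is disjoint from $S_1\sqcup S_2$, Corollary \ref{cor:rank+j} produces an admissible $J_+\subset J\cup\overline{J}$ with $\rk_\Lambda((S_1\sqcup S_2)\cup J_+)=\rk_\Lambda(S_1\sqcup S_2)+|J|$, and $(S_1\sqcup S_2)\cup J_+\subset S_1\cup S_2$, which gives the displayed inequality. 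Adding the three lines finishes the proof — this is the paper's argument, and it is the ``refined submodularity estimate'' you correctly guessed was needed but then declared to ``go the wrong way.''

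Your fallback — identify $g_\Lambda$ with the delta-matroid rank function via Corollary \ref{cor:rank_in_terms_of_M} and cite bisubmodularity from \cite{bouchet,bgw}, or run an unexecuted symmetric-exchange case analysis — does not close the gap as written. The citation route would require first verifying that $\cD(\Lambda)$ satisfies the delta-matroid exchange axiom, which in this paper is itself a consequence of the orthogonality of $\Lambda$ (again Corollary \ref{cor:rank+j}); and the appendix is deliberately self-contained, so deferring the ``real work'' to an unspecified case analysis leaves the statement unproved. The fix is small: redirect your application of Corollary \ref{cor:rank+j} from $S_1\cap S_2$ to $S_1\sqcup S_2$.
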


\begin{proof}
    We have:
    \begin{align*}
        |(S_1\cap S_2)\cap \overline{[n]}|+|(S_1\cup S_2)\cap \overline{[n]}|&=|S_1\cap\overline{[n]}|+|S_2\cap \overline{[n]}|,\\
        \rk_\Lambda(S_1)+\rk_\Lambda(S_2)&\ge \rk_\Lambda(S_1\cap S_2)+\rk_\Lambda(S_1 \cup S_2),\\
        \rk_\Lambda(S_1\cup S_2)-\rk_\Lambda(S_1\sqcup S_2)&\ge |\{j:j,\barj\in S_1\cup S_2\}|.
    \end{align*}
    The first two lines are elementary, and the third follows from Corollary \ref{cor:rank+j}. The claim follows from adding the three lines.
\end{proof}

\subsection{Matroid polytopes}\label{sec:polytopes}

We adopt the notation of \S\ref{sec:delta}. In this section, we prove:

\begin{theorem}\label{thm:polytopes_same}
Let $\Lambda\in\OG(n,2n+1)$ be any point, and let $\cX(\Lambda)$ be the convex hull in $\mathbb{R}^n$ of the vectors
\begin{equation*}
    \chi(S'):=\sum_{q\in [n]\cap S'}f_q\in\mathbb{R}^n.
\end{equation*}
raniging over all $S'\in\cD(\Lambda)$. Then, $\cX(\Lambda)=\cP(\Lambda)$, as where $\cP(\Lambda)$ is defined in Definition \ref{def:polytope}.
\end{theorem}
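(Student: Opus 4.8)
The plan is to prove the two inclusions $\cX(\Lambda)\subset\cP(\Lambda)$ and $\cP(\Lambda)\subset\cX(\Lambda)$ separately. The first inclusion is the easy direction: it suffices to check that each vertex $\chi(S')$ satisfies every defining inequality $x(S)\le g_\Lambda(S)$ of $\cP(\Lambda)$, and then take convex hulls. So fix $S'\in\cD(\Lambda)$ and $S\in\Adm_n$. By definition $\chi(S')(S)=\sum_{q}\epsilon_q\,[q\in S']$, where $\epsilon_q$ is $+1,-1,0$ according as $q\in S$, $\bar q\in S$, or neither; this equals $|S'\cap S\cap[n]|-|\{q\in[n]:\bar q\in S,\ q\in S'\}|$. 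On the other hand, by Corollary \ref{cor:rank_in_terms_of_M} we have $\rk_\Lambda(S)\ge |S'\cap S|$, and $|S'\cap S|=|S'\cap S\cap[n]|+|S'\cap S\cap\overline{[n]}|$. I would then do the elementary bookkeeping: using that $S'$ is admissible (so for each $q$ at most one of $q,\bar q$ lies in $S'$) and that $|S'\cap\overline{[n]}|=n-|S'\cap[n]|$ since $|S'|=n$, one checks that $\chi(S')(S)\le |S'\cap S|-|S\cap\overline{[n]}|\le g_\Lambda(S)$. This is the routine part.

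The harder inclusion is $\cP(\Lambda)\subset\cX(\Lambda)$, equivalently: every vertex of $\cP(\Lambda)$ is of the form $\chi(S')$ for some $S'\in\cD(\Lambda)$, and more precisely that $\cP(\Lambda)$ has no vertex strictly outside the lattice-point hull $\cX(\Lambda)$. The natural tool here is the bisubmodularity of $g_\Lambda$ established in Corollary \ref{cor:bisubmodular}: the polytope $\{x: x(S)\le g_\Lambda(S)\ \forall S\in\Adm_n\}$ is exactly the \emph{(integral) bisubmodular polyhedron} associated to the bisubmodular function $g_\Lambda$, and the fundamental theorem on such polyhedra (Bouchet--Cunningham, or Fujishige) says that its vertices are integral $0$--$1$ vectors; moreover, the greedy algorithm exhibits each vertex as $\chi(S')$ for an admissible $S'$ which is a basis in the sense that $g_\Lambda(S')=|S'\cap[n]|-|S'\cap\overline{[n]}|$, equivalently $\rk_\Lambda(S')=n$, i.e.\ $S'\in\cD(\Lambda)$. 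I would either cite this directly or, to keep the paper self-contained, reprove the needed half: given a vertex $x$ of $\cP(\Lambda)$, it is determined by maximizing a generic linear functional $c$; run the greedy procedure on $c$ against the constraints $x(S)\le g_\Lambda(S)$, ordering the coordinates by decreasing $|c_q|$ and choosing the sign of each coordinate to be $\mathrm{sgn}(c_q)$; bisubmodularity guarantees the greedy solution is feasible and optimal, and by construction it equals $\chi(S')$ where $S'=\{q:c_q>0\}\cup\{\bar q:c_q<0\}$ restricted appropriately. One then verifies $S'\in\cD(\Lambda)$ using Corollary \ref{cor:rank+j}: starting from the tight chain produced by the greedy run and repeatedly applying the rank-increment statement shows $\rk_\Lambda(S')=n$.

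The main obstacle is packaging the greedy/bisubmodular argument cleanly: one must be careful that $\Adm_n$ (not all subsets of $\wtn$) is the correct index set, that the sign conventions in Definition \ref{def:x(S)} match the standard conventions for bisubmodular polyhedra, and that ``vertex of $\cX(\Lambda)$'' versus ``$\chi(S')$ for $S'\in\cD(\Lambda)$'' line up (some $\chi(S')$ may fail to be vertices, which is harmless for the hull). A slicker alternative that avoids the full machinery: show directly that if $x\in\cP(\Lambda)$ then $x$ is a convex combination of the $\chi(S')$ by induction on $n$ or on the number of non-integral coordinates of $x$, using a tight constraint $x(S)=g_\Lambda(S)$ to split off a face and invoking Corollary \ref{cor:rank+j} to extend witnessing independent sets. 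Either way, the crux is translating the geometry of $\cP(\Lambda)$ into the combinatorics of $\cD(\Lambda)$ via the rank function, and Corollaries \ref{cor:rank+j}, \ref{cor:rank_in_terms_of_M}, and \ref{cor:bisubmodular} are exactly the inputs that make this translation work.
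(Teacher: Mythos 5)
Your proposal is correct, and for the easy inclusion $\cX(\Lambda)\subset\cP(\Lambda)$ it coincides with the paper's argument (Lemma \ref{lem:chi(S')(S)_formula} and Proposition \ref{prop:X_in_P}): the paper in fact proves the identity $\chi(S')(S)=|S\cap S'|-|S\cap\overline{[n]}|$ exactly, and then uses $|S\cap S'|\le\rk_\Lambda(S)$, which is the same bookkeeping you describe.

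For the hard inclusion the two arguments are dual formulations of the same bisubmodularity fact, but they run in opposite directions. You propose the \emph{greedy} route: start from a generic linear objective, build the signed set coordinate by coordinate, and read off the vertex as $\chi(S')$, either by citing the Bouchet--Cunningham/Fujishige theory of bisubmodular polyhedra or by reproving the relevant half. The paper instead starts from a vertex $x$ and extracts the certificate that the greedy algorithm would have produced: it shows the family $\cF_x$ of tight constraints is closed under $\cap$ and $\sqcup$ (Lemma \ref{lem:F_closed}, which is where Corollary \ref{cor:bisubmodular} enters), then shows a maximal chain in $\cF_x$ must be a full flag $S_1\subset\cdots\subset S_n$ of admissible sets with $|S_j|=j$ (Lemma \ref{lem:chain}, using a perturbation argument to rule out gaps in the chain), and solves the resulting triangular system to get $x\in\{0,1\}^n$, hence $x=\chi(S)$ for some maximal admissible $S$. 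What your route buys is brevity if one is willing to import the general theory; what the paper's route buys is self-containedness, which the authors explicitly state as their motivation. Two small points where the paper is slightly cleaner than your sketch: the general theorem on bisubmodular polyhedra gives integral, not automatically $0$--$1$, vertices, so you still need the observation that consecutive increments of $g_\Lambda$ along an admissible chain lie in $\{0,1\}$ (this is where the delta-matroid structure enters, and it is exactly what the paper's triangular system exploits); and to verify $S'\in\cD(\Lambda)$ you do not need Corollary \ref{cor:rank+j} --- it is immediate from feasibility of $\chi(S')$ at the single constraint $S=S'$, since Lemma \ref{lem:chi(S')(S)_formula} gives $\chi(S')(S')=n-|S'\cap\overline{[n]}|\le g_\Lambda(S')=\rk_\Lambda(S')-|S'\cap\overline{[n]}|$, forcing $\rk_\Lambda(S')=n$.
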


Theorem \ref{thm:polytopes_same} and the proof we provide are likely well-known, but we were not able to find a self-contained reference. One inclusion, Proposition \ref{prop:X_in_P}, is easy.

\begin{lemma}\label{lem:chi(S')(S)_formula}
    Let $S,S'\subset \wtn$ be admissible subsets, and assume furthermore that $S'\in\mAdm_n$ is maximal. Write $x=\chi(S')\in\bR^n$. Then, we have
    \begin{equation*}
        x(S)=|S\cap S'|-|S\cap \overline{[n]}|,
    \end{equation*}
    where $x(S)$ is defined in Definition \ref{def:x(S)}.
\end{lemma}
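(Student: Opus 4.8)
\textbf{Proof plan for Lemma \ref{lem:chi(S')(S)_formula}.}

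The plan is to unwind both sides directly from their definitions and match them term by term over $q \in [n]$. Recall from Definition \ref{def:x(S)} that $x(S) = \sum_{q=1}^n \epsilon_q x_q$, where $\epsilon_q$ is $1$ if $q \in S$, $-1$ if $\barq \in S$, and $0$ otherwise; and recall that $x = \chi(S')$ has $q$-th coordinate $x_q = 1$ if $q \in S'$ (equivalently $q \in [n] \cap S'$) and $x_q = 0$ otherwise. Here we are using that $S'$ is admissible, so $q \in S'$ and $\barq \in S'$ cannot both occur; in particular $x_q \in \{0,1\}$ is well-defined.

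First I would substitute these values of $x_q$ into the sum defining $x(S)$, so that only indices $q$ with $q \in S'$ contribute a (possibly signed) $1$. Thus $x(S) = \sum_{q \in [n],\ q \in S'} \epsilon_q = \#\{q \in [n]: q \in S',\ q \in S\} - \#\{q \in [n]: q \in S',\ \barq \in S\}$. The first count is simply $|S \cap S' \cap [n]|$. For the second count, I would observe that the condition ``$q \in S'$ and $\barq \in S$'' for $q \in [n]$ is in bijection (via $q \mapsto \barq$) with the condition ``$\barq \in S' \cap \overline{[n]}$ and $\barq \in S$,'' hence the second count equals $|S \cap S' \cap \overline{[n]}|$. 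Adding the two contributions with their signs gives $x(S) = |S \cap S' \cap [n]| - |S \cap S' \cap \overline{[n]}|$.

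Finally I would reconcile this with the claimed right-hand side $|S \cap S'| - |S \cap \overline{[n]}|$. Since $S$ is admissible and $S' \in \mAdm_n$ is maximal (so $S'$ meets every pair $\{q,\barq\}$ in exactly one element), for each index $q$ with $\barq \in S$ we automatically have $q \notin S'$ — indeed $\barq \in S'$ would force $q \notin S'$, and either way exactly one of $q,\barq$ lies in $S'$; but we need more care. The clean way is: $|S \cap S'| = |S \cap S' \cap [n]| + |S \cap S' \cap \overline{[n]}|$, and $|S \cap \overline{[n]}| = |S \cap S' \cap \overline{[n]}| + |S \cap \overline{[n]} \setminus S'|$. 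Now for $\barq \in S \cap \overline{[n]}$, maximality of $S'$ gives that exactly one of $q, \barq$ lies in $S'$; if it is $\barq$ then $\barq \in S \cap S' \cap \overline{[n]}$, and if it is $q$ then $q \in S' \cap [n]$ contributes $+1$ to $|S \cap S' \cap [n]|$ exactly when $q \in S$ too — but $q \in S$ and $\barq \in S$ contradicts admissibility of $S$, so that case contributes $0$. Hence $|S \cap \overline{[n]} \setminus S'|$ equals the number of $\barq \in S$ with $q \in S'$, which is exactly the second count above; therefore $|S \cap \overline{[n]}| = |S \cap S' \cap \overline{[n]}| + |S \cap S' \cap \overline{[n]}|$...

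I expect the only subtlety — the ``main obstacle,'' though it is minor — is precisely this bookkeeping at the end: correctly using admissibility of $S$ and maximality of $S'$ together to see that each element $\barq \in S \cap \overline{[n]}$ with $\barq \notin S'$ contributes to neither $|S \cap S'|$ nor, after the $q \leftrightarrow \barq$ flip, to an overcount, so that the two expressions $|S \cap S' \cap [n]| - |S \cap S' \cap \overline{[n]}|$ and $|S \cap S'| - |S \cap \overline{[n]}|$ genuinely agree. Concretely, I would finish by writing $|S\cap S'| - |S \cap \overline{[n]}| = \bigl(|S\cap S'\cap[n]| + |S \cap S' \cap \overline{[n]}|\bigr) - \bigl(|S\cap S'\cap \overline{[n]}| + |S\cap \overline{[n]} \setminus S'|\bigr) = |S\cap S'\cap[n]| - |S\cap \overline{[n]}\setminus S'|$, and then identify $|S \cap \overline{[n]} \setminus S'|$ with $|S \cap S' \cap \overline{[n]}|$ via the involution $q \leftrightarrow \barq$ together with admissibility of $S$ and maximality of $S'$, completing the proof.
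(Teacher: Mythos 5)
Your overall strategy is the same as the paper's: unwind $x(S)=\sum_q\epsilon_qx_q$ using $x_q=1\iff q\in S'\cap[n]$, and then do set-theoretic bookkeeping with admissibility of $S$ and maximality of $S'$. However, as written the proposal contains a concrete error in the key step. You claim that the second count $\#\{q\in[n]:q\in S',\ \barq\in S\}$ equals $|S\cap S'\cap\overline{[n]}|$ ``via the bijection $q\mapsto\barq$,'' and hence that $x(S)=|S\cap S'\cap[n]|-|S\cap S'\cap\overline{[n]}|$. This is false: under $q\mapsto\barq$ the condition $q\in S'$ becomes $\barq\notin S'$ (by maximality of $S'$, exactly one of $q,\barq$ lies in $S'$), so the second count is $|S\cap\overline{[n]}\setminus S'|$, not $|S\cap S'\cap\overline{[n]}|$. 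For a counterexample to your intermediate formula, take $n=2$, $S=\{\bar1\}$, $S'=\{\bar1,2\}$: then $x=\chi(S')=f_2$, so $x(S)=-x_1=0$ and $|S\cap S'|-|S\cap\overline{[n]}|=1-1=0$, but $|S\cap S'\cap[n]|-|S\cap S'\cap\overline{[n]}|=0-1=-1$. The same confusion resurfaces in your last sentence, where you propose to ``identify $|S\cap\overline{[n]}\setminus S'|$ with $|S\cap S'\cap\overline{[n]}|$''; these two quantities partition $S\cap\overline{[n]}$ into the part outside $S'$ and the part inside $S'$ and are not equal in general (in the example above they are $0$ and $1$).

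The repair is small, and in fact the correct statement already appears in the middle of your own write-up (``Hence $|S\cap\overline{[n]}\setminus S'|$ equals the number of $\barq\in S$ with $q\in S'$, which is exactly the second count''). Chaining the correct pieces: $x(S)=|S\cap S'\cap[n]|-\#\{q\in[n]:q\in S',\ \barq\in S\}=|S\cap S'\cap[n]|-|S\cap\overline{[n]}\setminus S'|$, and separately $|S\cap S'|-|S\cap\overline{[n]}|=\bigl(|S\cap S'\cap[n]|+|S\cap S'\cap\overline{[n]}|\bigr)-\bigl(|S\cap S'\cap\overline{[n]}|+|S\cap\overline{[n]}\setminus S'|\bigr)=|S\cap S'\cap[n]|-|S\cap\overline{[n]}\setminus S'|$. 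The two sides agree, and no identification of $|S\cap\overline{[n]}\setminus S'|$ with $|S\cap S'\cap\overline{[n]}|$ is needed. Note also that admissibility of $S$ is what guarantees that an index $q$ cannot contribute to both counts, and maximality of $S'$ is what converts $q\in S'$ into $\barq\notin S'$; the paper's proof carries out essentially this same computation as a single chain of equalities.
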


\begin{proof}
 We have 
    \begin{align*}
        x(S) &= |S' \cap [n] \cap S | - | S' \cap[n] \cap \overline{S}|\\
        &=(|S' \cap [n] \cap S |+|S\cap \overline{[n]}|)-(| S' \cap[n] \cap \overline{S}|+|S\cap \overline{[n]}|)\\
        &=(|S' \cap [n] \cap S |+|S|-|S\cap [n]|)-(| S' \cap[n] \cap \overline{S}|+|S\cap \overline{[n]}|)\\
        &=(| S | - | \overline{S'}\cap[n]\cap S |)-(| S' \cap[n] \cap \overline{S}|+|S\cap \overline{[n]}|)\\
        &=| S | - (| \overline{S'}\cap[n]\cap S |+| \overline{S'} \cap\overline{[n]} \cap S|)-|S\cap \overline{[n]}|\\
        &=(| S | - |S\cap \overline{S'}|)-|S\cap \overline{[n]}|\\
        &=|S\cap S'|-|S\cap \overline{[n]}|.
    \end{align*}
    The first line follows from unwinding the definition of $x(S)=(\chi(S'))(S)$. The remaining calculation follows from elementary set-theoretic considerations, using that $S\in\Adm_n$ and $S'\in\mAdm_n$.
\end{proof}

\begin{proposition}\label{prop:X_in_P}
We have $\cX(\Lambda)\subset\cP(\Lambda)$.
\end{proposition}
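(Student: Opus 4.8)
The plan is to show that every vertex $\chi(S')$ of $\cX(\Lambda)$, for $S'\in\cD(\Lambda)$, satisfies all the defining inequalities $x(S)\le g_\Lambda(S)$ of $\cP(\Lambda)$; since $\cP(\Lambda)$ is convex and $\cX(\Lambda)$ is the convex hull of these points, this gives $\cX(\Lambda)\subset\cP(\Lambda)$. So fix $S'\in\cD(\Lambda)$ and an arbitrary admissible $S\in\Adm_n$, and set $x=\chi(S')$.

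First I would invoke Lemma \ref{lem:chi(S')(S)_formula}, which computes
\begin{equation*}
    x(S)=|S\cap S'|-|S\cap\overline{[n]}|.
\end{equation*}
Next I would bound $|S\cap S'|$ against $\rk_\Lambda(S)$. Since $S'\in\cD(\Lambda)$ and $S$ is admissible, the submatrix $A_{S\cap S'}$ of a matrix $A$ whose rows span $\Lambda$ has full column rank $|S\cap S'|$ — indeed $S\cap S'\subset S'$ and the columns of $A$ indexed by $S'$ are linearly independent because $\rk_\Lambda(S')=n$. Hence
\begin{equation*}
    |S\cap S'|=\rk_\Lambda(S\cap S')\le \rk_\Lambda(S).
\end{equation*}
Alternatively this is immediate from Corollary \ref{cor:rank_in_terms_of_M}, which gives $\rk_\Lambda(S)=\max_{S''\in\cD(\Lambda)}|S''\cap S|\ge |S'\cap S|$. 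Combining,
\begin{equation*}
    x(S)=|S\cap S'|-|S\cap\overline{[n]}|\le \rk_\Lambda(S)-|S\cap\overline{[n]}|=g_\Lambda(S),
\end{equation*}
which is exactly the inequality defining $\cP(\Lambda)$ for the set $S$. As $S$ was arbitrary, $\chi(S')\in\cP(\Lambda)$, and taking convex hulls over $S'\in\cD(\Lambda)$ yields $\cX(\Lambda)\subset\cP(\Lambda)$.

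There is essentially no obstacle here: the proof is a one-line consequence of Lemma \ref{lem:chi(S')(S)_formula} together with the (easy) monotonicity $|S'\cap S|\le\rk_\Lambda(S)$, which itself follows either directly from the definition of rank as a matrix rank or formally from Corollary \ref{cor:rank_in_terms_of_M}. The genuine content of Theorem \ref{thm:polytopes_same} lies entirely in the reverse inclusion $\cP(\Lambda)\subset\cX(\Lambda)$, which will presumably require the bisubmodularity of $g_\Lambda$ (Corollary \ref{cor:bisubmodular}) and an argument that the vertices of the polytope cut out by bisubmodular inequalities are $0$–$1$ vectors of the form $\chi(S')$ with $S'\in\cD(\Lambda)$ — a Edmonds-style greedy/exchange argument. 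That is where the real work will be, but it is not needed for the statement at hand.
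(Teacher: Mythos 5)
Your proposal is correct and follows essentially the same route as the paper's proof: reduce to the vertices $\chi(S')$ by convexity, apply Lemma \ref{lem:chi(S')(S)_formula}, and bound $|S\cap S'|\le\rk_\Lambda(S)$ via the linear independence of the columns of $A_{S'}$. The only (harmless) addition is your alternative derivation of that bound from Corollary \ref{cor:rank_in_terms_of_M}.
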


\begin{proof}
    Because both $\cP(\Lambda)$ is convex, it suffices to prove that, for all $S'\in \cD(\Lambda)$, we have $\chi(S')\in \cP(\Lambda)$. Write $x=\chi(S')$ as in Lemma \ref{lem:chi(S')(S)_formula}, and let $S\subset\wtn$ be any admissible subset. By Lemma \ref{lem:chi(S')(S)_formula}, we have
    \begin{align*}
        x(S) &= |S\cap S'|-|S\cap \overline{[n]}|\\
        &\le\rk_{\Lambda}(S)-|S\cap \overline{[n]}|\\
        &=g_\Lambda(S).
    \end{align*}
The inequality $|S\cap S'|\le \rk_\Lambda(S)$ follows from the fact that $S'\in\cD(\Lambda)$. Indeed, if $A$ is a matrix whose rows form a basis for $\Lambda$, then the square submatrix $A_{S'}$ comprised of columns indexed by $S'$ has full rank, so the submatrix $A_{S\cap S'}$ of $A_S$ is comprised of $|S'\cap S|$ independent columns. As $x(S)\le g_\Lambda(S)$ for all admissible $S$, the point $x=\chi(S')$ satisfies all of the defining inequalities of $\cP(\Lambda)$. This proves the Proposition.
\end{proof}

To establish the opposite inclusion, we employ the following standard strategy (see e.g. \cite[\S 4.1.2-4.1.3]{lrs}): let $x\in\cP(\Lambda)$ be a vertex, so that a non-empty subset of the inequalities $x(S)\le g_\Lambda(S)$ are in fact equalities. By finding many admissible $S$ for which $x(S)= g_\Lambda(S)$ (Lemma \ref{lem:chain}), we will show that $x\in\{0,1\}^n$, and deduce that $x=\chi(S')$ for some $S'\in\cD(\Lambda)$.

\begin{lemma}\label{lem:F_closed}
    Let $x \in \cP(\Lambda)$ be any point. Then, the collection of subsets
    \begin{align*}
        \mathcal{F}_x = \{S \in\Adm_n: x(S) = g_\Lambda(S)\}.
    \end{align*}
    is closed under the operations $\cap,\sqcup$. That is, we have
    \begin{align*}
     S_1,S_2 \in \mathcal{F}_x \Rightarrow S_1 \cap S_2 \in \mathcal{F}_x \text{ and } S_1 \sqcup S_2 \in \mathcal{F}_x.
    \end{align*}
\end{lemma}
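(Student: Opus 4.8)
The plan is to exploit the bisubmodularity of $g_\Lambda$ established in Corollary \ref{cor:bisubmodular}, together with the fact that $x(S)$ is itself additive/modular with respect to the lattice operations $\cap,\sqcup$ on admissible subsets. First I would verify the identity
\begin{equation*}
x(S_1)+x(S_2)=x(S_1\cap S_2)+x(S_1\sqcup S_2)
\end{equation*}
for any two admissible $S_1,S_2\in\Adm_n$. This is a purely combinatorial computation: writing $x(S)=\sum_q \epsilon_q x_q$ as in Definition \ref{def:x(S)}, one checks the coefficient of $x_q$ on both sides for each $q\in[n]$, splitting into the cases according to which of $q,\barq$ (if any) lie in $S_1$, in $S_2$, or in neither; note that $S_1\sqcup S_2$ discards any index $q$ for which both $q\in S_1\cup S_2$ and $\barq\in S_1\cup S_2$, which is exactly what is needed for the coefficients to match. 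This is the only real ``calculation'' and it is short.

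Given that identity, the Lemma follows immediately. Suppose $S_1,S_2\in\mathcal{F}_x$, so $x(S_i)=g_\Lambda(S_i)$ for $i=1,2$. Then
\begin{align*}
g_\Lambda(S_1\cap S_2)+g_\Lambda(S_1\sqcup S_2)&\le g_\Lambda(S_1)+g_\Lambda(S_2)\\
&=x(S_1)+x(S_2)\\
&=x(S_1\cap S_2)+x(S_1\sqcup S_2),
\end{align*}
using bisubmodularity in the first line and the modularity of $x$ in the last. On the other hand, since $x\in\cP(\Lambda)$, we have $x(S_1\cap S_2)\le g_\Lambda(S_1\cap S_2)$ and $x(S_1\sqcup S_2)\le g_\Lambda(S_1\sqcup S_2)$. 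Combining these, both of these inequalities must in fact be equalities, i.e. $S_1\cap S_2\in\mathcal{F}_x$ and $S_1\sqcup S_2\in\mathcal{F}_x$.

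I expect no serious obstacle here: the content is entirely in Corollary \ref{cor:bisubmodular}, which has already been proven, and the modularity identity for $x$. The only point requiring a little care is confirming that $S_1\cap S_2$ and $S_1\sqcup S_2$ are again admissible (so that the inequalities $x(\cdot)\le g_\Lambda(\cdot)$ apply to them), but $S_1\cap S_2\subset S_1$ is admissible since subsets of admissible sets are admissible, and $S_1\sqcup S_2$ is admissible by its very definition, which removes precisely the conflicting pairs $\{q,\barq\}$. So the proof is a three-line deduction once the modularity identity is in hand.
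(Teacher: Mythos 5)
Your proof is correct and is essentially identical to the paper's, which likewise deduces the lemma immediately from the bisubmodularity of $g_\Lambda$ (Corollary \ref{cor:bisubmodular}) and the modularity identity $x(S_1)+x(S_2)=x(S_1\cap S_2)+x(S_1\sqcup S_2)$; you have simply spelled out the squeeze argument and the admissibility check that the paper leaves implicit.
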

\begin{proof}
Immediate from bi-submodularity of the function $g_\Lambda$, Corollary \ref{cor:bisubmodular}, and the fact that
\begin{equation*}
    x(S_1)+x(S_2)=x(S_1\cap S_2)+x(S_1\sqcup S_2).
\end{equation*}
\end{proof}

\begin{lemma}\label{lem:chain}
    Let $x \in \cP(\Lambda)$ be a vertex, and define $\cF_x$ as in Lemma \ref{lem:F_closed}. Then, there exists a chain of admissible subsets
    \begin{equation*}
     S_1 \subset S_2\cdots\subset S_n\subset \wtn,
    \end{equation*}
    with $|S_j|=j$ for each $j$, and $S_j\in\cF_x$.
\end{lemma}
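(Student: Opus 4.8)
The plan is to invoke the standard description of the vertices of a polyhedron in terms of tight facet normals, and then to ``straighten'' a family of tight sets into a chain by an uncrossing argument adapted to the operations $\cap,\sqcup$. Write $\epsilon(S)\in\bR^n$ for the vector with coordinates $\epsilon_q$ as in Definition \ref{def:x(S)}, so that $x(S)=\langle\epsilon(S),x\rangle$ and $\cP(\Lambda)\subset[0,1]^n$ is the bounded polyhedron defined by $\langle\epsilon(S),x\rangle\le g_\Lambda(S)$, $S\in\Adm_n$. A point $x\in\cP(\Lambda)$ is then a vertex if and only if $\{\epsilon(S):S\in\cF_x\}$ spans $\bR^n$; also $\emptyset\in\cF_x$, since $x(\emptyset)=0=g_\Lambda(\emptyset)$. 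Fix a chain $\emptyset=S_0\subsetneq S_1\subsetneq\cdots\subsetneq S_m$ in $\cF_x$ of maximal length (it may be taken to start at $\emptyset$, as $\emptyset\in\cF_x$). Writing $\operatorname{supp}(S)=\{q\in[n]:\{q,\overline q\}\cap S\ne\emptyset\}$, admissibility forces $\operatorname{supp}(S_{i-1})\subsetneq\operatorname{supp}(S_i)$ for all $i$, so the differences $\epsilon(S_i)-\epsilon(S_{i-1})$ have pairwise disjoint nonempty supports; hence $\epsilon(S_1),\ldots,\epsilon(S_m)$ are linearly independent with no further hypothesis, and $V:=\langle\epsilon(S_0),\ldots,\epsilon(S_m)\rangle$ has dimension $m$. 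If I can show $V=\bR^n$, then $m=n$, and the strictly increasing integers $0=|S_0|<\cdots<|S_m|\le n$ are forced to be $0,1,\ldots,n$, so $S_1\subsetneq\cdots\subsetneq S_n$ is the required chain.

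So the crux is to prove $V=\langle\epsilon(S):S\in\cF_x\rangle$, which for a vertex $x$ equals $\bR^n$. Suppose not, and choose $S\in\cF_x$ with $\epsilon(S)\notin V$. I claim that from $S$ one can produce a chain in $\cF_x$ of length $m+1$, contradicting maximality. First, if $S$ is comparable with every $S_i$, then either $S=S_i$ for some $i$ (impossible since $\epsilon(S)\notin V$), or $S$ lies strictly between two consecutive chain elements, or $S\supsetneq S_m$, or $S\subsetneq S_0$ (impossible) --- in each of the remaining cases one gets a chain of length $m+1$. Otherwise $S$ crosses some $S_i$; let $i_1$ be the least such index. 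Comparability of $S$ with $S_{i_1-1}$ together with the fact that $S$ crosses $S_{i_1}$ forces $S_{i_1-1}\subseteq S$ and $a_{i_1}\notin S$, where $S_{i_1}=S_{i_1-1}\cup\{a_{i_1}\}$ (otherwise $S$ would fail to cross $S_{i_1}$). Replacing $S$ by $S':=S\sqcup S_{i_1}\in\cF_x$ (closure under $\sqcup$, Lemma \ref{lem:F_closed}), a direct check gives $S'=S\cup\{a_{i_1}\}$ when $\overline{a_{i_1}}\notin S$, and $S'=S\setminus\{\overline{a_{i_1}}\}$ when $\overline{a_{i_1}}\in S$; in both cases $\epsilon(S')=\epsilon(S)\pm(\epsilon(S_{i_1})-\epsilon(S_{i_1-1}))\notin V$. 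In the first case $S'\supseteq S_{i_1}$, so the least index of a chain element crossed by $S'$ is at least $i_1+1$; in the second case $S'\supseteq S_{i_1-1}$ (so that index is still at least $i_1$) while $|S'|=|S|-1$.

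The genuine obstacle is this uncrossing step: in the submodular setting one would replace a crossing pair $\{S,S_{i_1}\}$ by $\{S\cap S_{i_1},\,S\cup S_{i_1}\}$, but for the operation $\sqcup$ this need not preserve the linear span, so I instead replace a single set at a time and control the pair $(\nu(S),|S|)$, where $\nu(S)\in\{1,\ldots,m\}$ is the least index of a chain element crossed by $S$. By the previous paragraph, each replacement either strictly increases $\nu$ or keeps $\nu$ fixed while strictly decreasing $|S|\ge 0$; since $\nu$ is bounded above, the process terminates, and it can only terminate at an $S$ comparable with the entire chain, which we have seen is impossible. This establishes $V=\bR^n$ and proves the lemma. (As in the surrounding discussion, the chain of tight sets so produced will then force $x\in\{0,1\}^n$, which is how Theorem \ref{thm:polytopes_same} is to be finished.)
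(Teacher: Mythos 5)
Your overall strategy---characterizing the vertex $x$ by the condition that the tight normals $\epsilon(S)$, $S\in\cF_x$, span $\bR^n$, and then showing that this span equals the span of the normals along a maximal chain in $\cF_x$---is sound and close in spirit to the paper's proof, which phrases the same obstruction as a perturbation direction at $x$. The preliminary steps (linear independence of $\epsilon(S_1),\ldots,\epsilon(S_m)$ via the disjoint supports of consecutive differences, the reduction of the lemma to $V=\bR^n$, and the insertion argument when $S$ is comparable with the whole chain) all check out. But there is a genuine gap at the heart of the uncrossing step: you write ``$S_{i_1}=S_{i_1-1}\cup\{a_{i_1}\}$'' as though it were forced, i.e., you assume that consecutive elements of your maximal chain differ by a single element. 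Nothing guarantees this: maximality only says that no element of $\cF_x$ can be inserted, not that no admissible set can be. Since the conclusion of the lemma is precisely the existence of a chain with $|S_j|=j$, and in your architecture the singleton steps are deduced only at the very end from $m=n$, assuming them inside the proof of $V=\bR^n$ is circular.

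The gap is not cosmetic. If $T:=S_{i_1}\setminus S_{i_1-1}$ has more than one element, then $S':=S\sqcup S_{i_1}$ need not contain $S_{i_1}$, need not have smaller cardinality than $S$ (writing $T_1,T_2$ for the parts of $T\setminus S$ with $\bar a\notin S$, resp.\ $\bar a\in S$, one gets $|S'|=|S|+|T_1|-|T_2|$), and can still cross $S_{i_1}$; so your measure $(\nu(S),|S|)$ can fail to improve, and the asserted formula for $\epsilon(S')$ is no longer justified. The missing ingredient is exactly the structural use of maximality that the paper makes: $S\cap S_{i_1}\in\cF_x$ is comparable with every chain element and sandwiched between $S_{i_1-1}$ and $S_{i_1}$, hence equals $S_{i_1-1}$ (it cannot equal $S_{i_1}$ since $S$ crosses $S_{i_1}$); likewise $(S\sqcup S_{i_1})\cap S_{i_1}$ must equal $S_{i_1-1}$ or $S_{i_1}$, which forces $T_1=T$ (so $\nu$ strictly increases) or $T_1=\emptyset$ (so $|S|$ strictly drops), and the identity $\epsilon(S)+\epsilon(S_{i_1})=\epsilon(S\cap S_{i_1})+\epsilon(S\sqcup S_{i_1})$ then gives $\epsilon(S')\notin V$. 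With these additions your iteration closes up; as written, it presupposes what it is trying to prove. For comparison, the paper avoids the iteration entirely: it shows that if some step of the maximal chain has size greater than one, then every tight normal is constant on the corresponding block of coordinates, exhibiting a positive-dimensional space of perturbations of $x$ inside $\cP(\Lambda)$ and contradicting vertexness directly.
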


\begin{proof}
Consider any maximal chain
\begin{equation*}
     \cC=(S_{j_1} \subset S_{j_2}\cdots\subset S_{j_m}\subset \wtn)
 \end{equation*}
of non-empty admissible subsets lying in $\cF_x$, by which we mean that no further admissible subset $S\in\cF_x$ can be inserted into $\cC$. We choose the indices so that $|S_{j_k}|=j_k$.

Write $j=j_1$. We claim that $j=1$; suppose instead that $j>1$. Without loss of generality, we assume for convenience that $S_{j}=[j]$. We claim that any $S\in \cF_x$ must have
\begin{equation*}
    S\cap ([j]\cup\overline{[j]})\in\{\emptyset,[j],\overline{[j]}\}.
\end{equation*}
Indeed, by Lemma \ref{lem:F_closed}, we have $S\cap [j]=S\cap S_j\in \cF_x$. Unless $S\cap [j]=\emptyset$ or $S\cap [j]=[j]$, the proper subset $S\cap S_j\subset S_j$ may be appended to $\cC$, a contradiction. If $S\cap [j]=[j]$, then because $S$ is admissible, in fact $S\cap ([j]\cup\overline{[j]})=[j]$. If instead $S\cap [j]=\emptyset$, then $S\cap ([j]\cup\overline{[j]})=S\cap\overline{[j]}$. In this case, we have
\begin{equation*}
    (S\sqcup S_j)\cap S_j=[j]-(\overline{S\cap\overline{[j]}})\in \cF_x.
\end{equation*}
We again obtain a contradiction unless $S\cap\overline{[j]}\in \{\emptyset,\overline{[j]}\}$, as needed.

Fix any vector $v=v_1f_1+\cdots+v_jf_j\subset\langle f_1,\ldots,f_j\rangle \subset \bR^n$ for which $v_1+\cdots+v_j=0$. We claim that there exists an $\varepsilon>0$ such that $x+\varepsilon v\in \cP(\Lambda)$. Indeed, taking $\varepsilon$ sufficiently small preserves the (finitely many) \emph{strict} inequalities $(x+\varepsilon v)(S)<g_\Lambda(S)$ for any admissible $S\notin\cF_x$. Furthermore, all \emph{equalities} $(x+\varepsilon v)(S)=g_\Lambda(S)$ for $S\in\cF_x$ remain true for any $\varepsilon$, because $S\cap ([j]\cup\overline{[j]})\in\{\emptyset,[j],\overline{[j]}\}$ and $v_1+\cdots+v_j=0$ together imply that $v(S)=0$.

When $j>1$, the existence of such an $\varepsilon$ for any $v$ contradicts the fact that $x\in\cP(\Lambda)$ is a vertex. Indeed, we deduce that $x$ may be perturbed non-trivially inside $\cP(\Lambda)$ along any tangent direction in the linear span $\langle f_1-f_2,\ldots,f_1-f_j\rangle$, which cannot hold for a vertex of $\cP(\Lambda)$. Therefore, we have $j=j_1=1$.

Similar arguments show that $j_2-j_1=1$, that $j_3-j_2=1$, and so on, until $j_m=n$ (so $m=n$). Therefore, the maximal chain $\cC$ must be of length $n$, and the proof is complete.
\end{proof}

\begin{proof}[Proof of Theorem \ref{thm:polytopes_same}]
    By Proposition \ref{prop:X_in_P}, it suffices to show that $\cP(\Lambda)\subset\cX(\Lambda)$. It suffices in turn to show that any vertex $x \in \cP(\Lambda)$ lies in $\cX(\Lambda)$. Fix such a vertex $x$.

    Choose a chain of subsets $S_j$ as in Lemma \ref{lem:chain}. In particular, we have $x(S_j)=g_\Lambda(S_j)$ for $j=1,\ldots,n$. By considering the equations $x(S_j)=g_\Lambda(S_j)$ in increasing order of $j$, we see that there is a \emph{unique} point $x\in\bR^n$ with $x(S_j)=g_\Lambda(S_j)$ for all $j$, and that $x\in\{0,1\}^n$. Therefore, the given vertex $x \in \cP(\Lambda)$ must equal $\chi(S)$ for some maximal admissible set $S$. 

    In particular, we have $\chi(S)(S)\le g_{\Lambda}(S)$. By Lemma \ref{lem:chi(S')(S)_formula}, this implies that $n=|S|\le \rk_\Lambda(S)$. Thus, $S\in\cD(\Lambda)$, so $x=\chi(S)\in \cX(\Lambda)$.
\end{proof}

\bibliographystyle{alpha} 
\bibliography{OG_v3.bib}

\end{document}